\DeclareMathOperator{\ach}{\mathrm{CH}}
\DeclareMathOperator{\ch}{\underline{\mathrm{CH}}}
\DeclareMathOperator{\ot}{\otimes}
\newtheorem{theorem}{Theorem}[section]
\newtheorem{prop}[theorem]{Proposition}
\newtheorem{lemma}[theorem]{Lemma}
\newtheorem{definition}[theorem]{Definition}
\newtheorem{remark}[theorem]{Remark}
\newtheorem{corollary}[theorem]{Corollary}
\title{Decompositions of Chow rings of direct sums of matroids}
\author{Paweł Pielasa}
\address{University of Cambridge, Department of Pure Mathematics and Mathematical Statistics}
\email{pp554@cam.ac.uk}
\date{}
\begin{document}

\begin{abstract}
    We prove two dual recursive decompositions as a graded $\ch(M) \otimes \ch(N)$-module of the Chow ring $\ch(M\oplus N)$ of the direct sum of matroids. We use this to obtain a decomposition of $\ch(M\oplus N)$ into irreducible $\ch(M) \otimes \ch(N)$-modules. The result implies a new recursive formula for the Eulerian numbers. Similarly, we find a recursive decomposition of the augmented Chow ring $\ach(M \oplus N)$ into $\ach(M) \otimes \ach(N)$-modules, generalizing some of the results of \cite{ssd}. We prove analogous decompositions of (augmented) Chow polynomials of weakly ranked posets in the sense of \cite{CF25}.
\end{abstract}

\maketitle

\section{Introduction}

The direct sum is one of the most fundamental operations that can be applied to matroids, assigning to a pair of matroids $M,N$ on ground sets $[m],[n]$ a matroid $M\oplus N$ on the ground set $[m] \sqcup [n]$ whose flats are given by $F \sqcup G$ for pairs of flats $F,G$ in $M,N$. Denote by $\ch(M)$, $\ch(N)$ the Chow rings of $M$, $N$ and by $\ach(M)$, $\ach(N)$ the augmented Chow ring of $M$, $N$. There is a structure of a $\ch(M) \otimes \ch(N)$-module on $\ch(M \oplus N)$ and a structure of an $\ach(M) \otimes \ach(N)$-module on $\ach(M \oplus N)$.\\

In general, proving statements about (augmented) Chow rings, or obtaining an explicit understanding of any specific Chow ring, can be a challenging task. Such investigations often lead to deep combinatorial insights. The most common approach to studying Chow rings of matroids is by induction, establishing the desired properties for “smaller” matroids obtained through suitable matroid operations (see for example \cite{AHKcomb}). In \cite{ssd} the authors studied the decomposition of the Chow ring $\ch(M)$ of a loopless matroid $M$ as a $\ch(M\backslash i)$-module, where $M\backslash i$ denotes the contraction of $M$ by $i\in [m]$. They obtained its decomposition into irreducible $\ch(M\backslash i)$-modules, as well as an analogous decomposition of $\ach(M)$ into irreducible $\ach(M \backslash i)$-modules. We state it below in the case when $i$ is a coloop of $M$.
\begin{prop}\label{ssddlachow}
    \begin{enumerate}
        \item If $i$ is a coloop of $M$ then there is an isomorphism of graded $\ch(M\backslash i)$-modules
        \[ \ch(M) \simeq \ch(M\backslash i) \oplus \ch(M \backslash i)[-1] \oplus \underset{\emptyset \subsetneq F \subsetneq M \backslash i}{\bigoplus} \ch(M_{F \cup i}) \otimes \ch(M^F) [-1], \]
        where the direct sum runs over all nonempty proper flats of $M\backslash i$.
        \item If $i$ is a coloop of $M$ then there is an isomorphism of graded $\ach(M\backslash i)$-modules
        \[ \ach(M) \simeq \ach(M \backslash i) \oplus \ach(M \backslash i)[-1] \oplus \underset{\emptyset \subsetneq F \subsetneq M\backslash i}{\bigoplus} \ach(M^F) \otimes \ch(M_{F \cup i}) [-1], \]
        where the direct sum runs over all proper flats of $M\backslash i$.
    \end{enumerate}
\end{prop}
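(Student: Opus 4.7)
The plan is to realize the decomposition as an explicit map from the right-hand side into $\ch(M)$, and then to prove it is an isomorphism. The starting point is a careful analysis of the poset of proper flats: when $i$ is a coloop of $M$, they partition into three types, namely proper flats $F$ of $M\backslash i$, the hyperplane $M\backslash i$ itself (a proper flat of $M$ but not of $M\backslash i$), and flats of the form $F\cup\{i\}$ where $F$ is a flat of $M\backslash i$ with $F\neq M\backslash i$. This trichotomy matches the indexing on the right-hand side (with the linear relations eliminating the seemingly missing generator $x_{\{i\}}$ in favour of the others) and dictates exactly which multiplication maps to use.

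For part (1), I would define the decomposition map summand by summand. The first copy $\ch(M\backslash i)$ is embedded by the natural ring map $x_F\mapsto x_F$, well defined thanks to the coloop condition. The shifted copy $\ch(M\backslash i)[-1]$ is embedded by multiplication by the hyperplane class $x_{M\backslash i}$. For each nonempty proper flat $F$ of $M\backslash i$, the summand $\ch(M_{F\cup i})\otimes\ch(M^F)[-1]$ is embedded by multiplication by $x_{F\cup i}$, composed with the standard identification of the principal ideal generated by $x_G$ with $\ch(M^G)\otimes\ch(M_G)[-1]$ used in \cite{AHKcomb}. Since $i$ is a coloop, $M^{F\cup i}=M^F\oplus\{i\}$ and the one-element factor contributes trivially, giving the factor $\ch(M^F)$ as stated. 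Each of these maps is a graded $\ch(M\backslash i)$-module homomorphism, which I would verify by unwinding the module structure through the ring map $\ch(M\backslash i)\to\ch(M)$.

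The main obstacle is to show that the induced map from the direct sum is an isomorphism. I would split this into surjectivity and a Hilbert series comparison. Surjectivity would follow from a straightening argument: the quadratic relations $x_Fx_G=0$ for incomparable flats, together with the linear relations in $\ch(M)$ (which express $x_{\{i\}}$ as a combination of the other generators), reduce every monomial to one whose support is a chain of proper flats. Such a chain either avoids $i$ entirely (landing in the first summand), is capped by $x_{M\backslash i}$ (the second summand), or meets the $i$-containing part of the poset at a unique minimal element $F\cup\{i\}$ (a summand in the third family). For injectivity, I would match Hilbert series: the Hilbert series of the right-hand side can be computed using the known product formulas for Chow rings of matroids, and the resulting identity with the Hilbert series of $\ch(M)$ reduces to a combinatorial recurrence for reduced characteristic polynomials under coloop extension.

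For part (2), the same overall strategy applies, with adjustments specific to the augmented Chow ring. The summation now includes $F=\emptyset$ because $\ach$ has a generator for the empty flat, and the tensor factors in the summand appear in swapped order ($\ach(M^F)\otimes\ch(M_{F\cup i})$) because the augmented analogue of the principal ideal identification places the augmented factor on the ``restriction'' side. The additional technical bookkeeping lies in tracking the augmentation generator when verifying compatibility with the $\ach(M\backslash i)$-module structure, and in adapting the Hilbert series comparison to augmented Chow polynomials.
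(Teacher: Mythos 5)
This proposition is quoted from \cite{ssd}; the paper does not prove it directly, but rather observes in the remarks following Theorems \ref{thm1}, \ref{thm2}, and \ref{thmaug} that it is recovered by specializing those results to the case where $N$ has a one-element ground set $\{i\}$, so that $M\oplus N$ plays the role of the matroid with coloop $i$. The paper's machinery for the general case — pushforward/pullback maps from (augmented) Bergman fans, Poincar\'e-pairing orthogonality for injectivity, and Chow-function identities for the Hilbert series match — then gives the statement. Your proposal instead attempts a direct proof in the spirit of \cite{ssd}, which is a reasonable route, but as written it contains two substantive errors.

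First, the embedding of the initial summand cannot be the map $x_F \mapsto x_F$. That assignment does not descend to a well-defined ring homomorphism $\ch(M\backslash i)\to\ch(M)$: a linear relation $\sum_{j\notin F}x_F - \sum_{j'\notin G}x_G$ in $\ch(M\backslash i)$ maps to $\sum_{j\notin F,\, F\subsetneq M\backslash i}x_F - \sum_{j'\notin G,\, G\subsetneq M\backslash i}x_G$, which is not zero in $\ch(M)$ (the $\ch(M)$-relations also involve the flats $M\backslash i$ and $F\cup i$). The correct map is the pullback along the projection of Bergman fans, given on generators by $x_F\mapsto x_F + x_{F\cup i}$; this is what $\iota_{M,N}$ specializes to when $N=\{i\}$, and it is what defines the $\ch(M\backslash i)$-module structure on $\ch(M)$ in the first place. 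The coloop condition guarantees the projection is a morphism of fans, not that the naive inclusion of generators is a ring map.

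Second, the identification of the third family of summands with principal ideals overcounts. Multiplication by $x_{F\cup i}$ identifies $x_{F\cup i}\cdot\ch(M)$ with $\ch(M_{F\cup i})\otimes\ch(M^{F\cup i})[-1]$, and you then claim $\ch(M^{F\cup i})\simeq\ch(M^F)$ because $M^{F\cup i}=M^F\oplus\{i\}$ and ``the one-element factor contributes trivially.'' This is false: $\operatorname{rk}(M^{F\cup i})=\operatorname{rk}(M^F)+1$, so $\ch(M^{F\cup i})$ has strictly larger top degree, and more generally $\ch(M^F\oplus\{i\})$ is strictly bigger than $\ch(M^F)$ (this is exactly the content of Theorem \ref{thm1} in the rank-one case — already $\ch(U_{1,1}\oplus U_{1,1})=\ch(U_{2,2})$ has Hilbert series $1+x$, not $1$). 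The intended summand is thus a proper $\ch(M\backslash i)$-submodule of the principal ideal, namely the image of $\ch(M_{F\cup i})\otimes\ch(M^F)$ under $\psi^{F\cup i}_M\circ(\mathrm{id}\otimes\iota_{M^F,\{i\}})$, exactly as in the definition of $u_{F,G}$ in Section \ref{sectioninj}. Without this correction the proposed right-hand side has strictly larger Hilbert series than $\ch(M)$ and the stated map cannot be injective. Your surjectivity-by-straightening plan would also need to account for the correct images; the paper explicitly remarks that a straightening argument is possible but considerably less illuminating than the Chow-function approach it adopts.
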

These results were then crucially used in \cite{singular} to prove the Kähler package of the intersection cohomology module of a matroid. This implied remarkable combinatorial results, including the Dowling and Wilson's conjecture and  the non-negativity of the coefficients of the Kazdhan-Lusztig polynomials of matroids (introduced in \cite{KLpolyofmatroid}).\\

Motivated by the philosophy above we want to understand the decomposition of the (augmented) Chow ring of a direct sum of matroids into simpler -- or even irreducible -- $\ch(M) \otimes \ch(N)$-modules (respectively into $\ach(M)\otimes \ach(N)$-modules). In the following we assume all matroids to be loopless.

\subsection{Main results}
Let us briefly summarize the main results of our paper.
\begin{restatable}{theorem}{maithm} \label{thm1}
    There is an isomorphism of $\ch(M) \ot \ch(N)$-modules
    \begin{flalign*}
        \ch(M\oplus N) \simeq \ch(M)\ot \ch(N) \oplus
         \underset{F,G}{\bigoplus} \ch(M_F \oplus N_G) \ot \ch(M^F) \ot \ch(N^G) [-1],
    \end{flalign*}
    where we take the direct sum over all pairs nonempty flats $F,G$ in $M,N$.
\end{restatable}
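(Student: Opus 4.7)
The plan is to construct an explicit $\ch(M)\otimes \ch(N)$-module homomorphism $\Psi$ from the right-hand side of the claimed isomorphism to $\ch(M\oplus N)$, and then to verify it is a bijection.

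I would take the $\ch(M)\otimes \ch(N)$-module structure on $\ch(M\oplus N)$ coming from the ring map $\phi \colon \ch(M)\otimes \ch(N)\to \ch(M\oplus N)$ defined on degree-one generators by
\[
x_F^M \mapsto \sum_{G\in \mathcal{L}(N)} x_{F\sqcup G},\qquad x_G^N \mapsto \sum_{F\in \mathcal{L}(M)} x_{F\sqcup G}.
\]
This is well-defined because $F, F'$ incomparable in $\mathcal{L}(M)$ stay incomparable in $\mathcal{L}(M\oplus N)$ regardless of the second coordinate, and because $\phi(\alpha_i^M) = \alpha_i^{M\oplus N}$ for every $i\in E(M)$. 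The first summand $\ch(M)\otimes \ch(N)$ maps in via $\phi$. For each pair of nonempty flats $(F,G)$, I would construct a degree-shift-one map
\[
\psi_{F,G}\colon \ch(M_F\oplus N_G)\otimes \ch(M^F)\otimes \ch(N^G)\to \ch(M\oplus N),\qquad \xi\otimes \alpha\otimes \beta \mapsto x_{F\sqcup G}\cdot \widetilde\xi\cdot \widetilde\alpha\cdot \widetilde\beta,
\]
using lifts $\widetilde{\cdot}$ coming from the natural projections of $\ch(M\oplus N)$ onto the Chow rings of the contraction $(M\oplus N)_{F\sqcup G} = M_F\oplus N_G$ and the restriction factors; multiplication by $x_{F\sqcup G}$ kills the ambiguity of the lifts. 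The collection $\{\phi, \psi_{F,G}\}$ assembles into $\Psi$.

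Surjectivity of $\Psi$ holds on the degree-one generators of $\ch(M\oplus N)$: mixed generators are images of $1\otimes 1\otimes 1$ under $\psi_{F,G}$, and pure generators are in $\phi(R)$ up to mixed corrections carried by the other summands. For injectivity, I would verify the Hilbert series identity
\[
H_{\ch(M\oplus N)}(t) = H_{\ch(M)\otimes \ch(N)}(t) + t\cdot \sum_{F,G} H_{\ch(M_F\oplus N_G)}(t)\cdot H_{\ch(M^F)}(t)\cdot H_{\ch(N^G)}(t),
\]
where the sum is over pairs of nonempty flats; combined with surjectivity, this forces $\Psi$ to be an isomorphism of finite-dimensional graded modules.

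The main obstacle is this Hilbert series identity. Proving it directly amounts to a recursive polynomial identity that requires a careful combinatorial analysis of chains in the product lattice $\mathcal{L}(M)\times \mathcal{L}(N)$, stratified by the first ``mixed'' flat along the chain. A secondary subtlety is the boundary pair $(F,G)=(E(M),E(N))$, whose summand $\ch(M)\otimes \ch(N)[-1]$ must absorb precisely the mass corresponding to the ``missing'' top variable $x_{E(M)\sqcup E(N)}$ that is absent from the Feichtner--Yuzvinsky presentation of $\ch(M\oplus N)$; making $\psi_{E(M),E(N)}$ explicit (it is multiplication by a specific degree-one element of $\ch(M\oplus N)$ rather than by a variable) is the technical crux of the construction.
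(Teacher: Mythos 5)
Your construction of the maps matches the paper's (the $\iota_{M,N}$ and $u_{F,G}$ of Section~\ref{sectioninj}), and your overall strategy of "explicit module map plus dimension count" is in the right spirit, but two steps in your argument do not go through.

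First, your surjectivity argument is flawed. You claim surjectivity follows from hitting all degree-one generators of $\ch(M\oplus N)$, but $\Psi$ is a $\ch(M)\otimes\ch(N)$-\emph{module} map, not a ring map, so containing $\ch^1(M\oplus N)$ in the image does not force surjectivity. One would need $\ch(M\oplus N)$ to be generated in degrees $\leq 1$ as a $\ch(M)\otimes\ch(N)$-module, which is false in general: the summands $\ch(M_F\oplus N_G)\otimes\ch(M^F)\otimes\ch(N^G)[-1]$ on the right-hand side of the theorem are themselves non-cyclic $\ch(M)\otimes\ch(N)$-modules (indeed $\ch(M_F\oplus N_G)$ is typically not cyclic over $\ch(M_F)\otimes\ch(N_G)$, which is exactly the content of the theorem applied one level down). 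The paper sidesteps this by proving \emph{injectivity} instead (Theorem~\ref{phiinjective}), via the Poincaré pairing: Lemma~\ref{lemortogonalnosc} shows the images of the $u_{F,G}$ are mutually orthogonal and orthogonal to $\mathrm{im}(\iota_{M,N})$, and Lemma~\ref{zach} shows each $u_{F,G}$ preserves the pairing up to sign, so any kernel element would produce a contradiction by pairing against a suitable test vector. Injectivity plus equal Hilbert series then gives the isomorphism.

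Second, you correctly identify the Hilbert series identity as the main obstacle, but you leave it unproven and suggest a direct combinatorial chain-counting approach. The paper instead derives it algebraically from the identity $H = H_1\otimes H_2 + xH(H_1\otimes H_2) - xH(I\otimes H_2) - xH(H_1\otimes I) + xH$ of Chow functions in the incidence algebra of the product poset (Theorem~\ref{thmchowfun}), which follows in a few lines from the tensor-factorization of the reduced kernel $\overline{\kappa}$ (Equation~\ref{kernelrek}) and matrix inversion; specializing to the lattice of flats and the entry $(\emptyset,\emptyset),(M,N)$ yields precisely the Hilbert series identity you need, via Proposition~\ref{HGtochow}. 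This is considerably cleaner than stratifying chains by their first mixed flat. Finally, your worry about the boundary summand $(F,G)=(M,N)$ is overstated: the paper's map $u_{M,N}$ is just the pushforward $\underline{\psi}^M_{M\oplus N}$ along the ray of the flat $M$ (a genuine proper flat of $M\oplus N$), i.e.\ essentially multiplication by the variable $x_{M\cup\emptyset}$; no exotic degree-one element is needed.
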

We give a sketch of the geometric interpretation of the above theorem in the realizable case in Section \ref{sectionposet}.
We can also formulate a dual version of this decomposition.
\begin{restatable}{theorem}{maithmdwa} \label{thm2}
    There is an isomorphism of $\ch(M) \ot \ch(N)$-modules
    \begin{align*}
        \ch(M\oplus N) \simeq \ch(M)\ot \ch(N) \ \oplus
         \underset{F,G}{\bigoplus} \ch(M^F \oplus N^G) \ot \ch(M_F) \ot \ch(N_G) [-1],
    \end{align*}
    where we take the direct sum over all pairs flats $F\subsetneq M$, $G \subsetneq N$.
\end{restatable}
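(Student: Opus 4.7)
The plan is to follow the same blueprint as the proof of \cref{thm1}, but with the roles of restriction and contraction exchanged and the indexing moved from nonempty flats to proper flats. I would construct an explicit $\ch(M) \otimes \ch(N)$-module homomorphism $\Phi$ from the right-hand side of \cref{thm2} to $\ch(M \oplus N)$ and then argue that it is an isomorphism.

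First, I would define $\Phi$ summand by summand. On the distinguished term $\ch(M) \otimes \ch(N)$, the map is induced by the natural inclusions of subrings coming from the coordinate embeddings $M, N \hookrightarrow M \oplus N$. On each summand $\ch(M^F \oplus N^G) \otimes \ch(M_F) \otimes \ch(N_G)[-1]$ indexed by a pair of proper flats $(F, G)$, $\Phi$ would be given by multiplication by a canonical class in $\ch(M \oplus N)$. Where the map for \cref{thm1} uses classes built from the generators $x_H$ for flats $H$ lying below $F \sqcup G$ in the flat lattice of $M \oplus N$, here one must instead use generators $x_H$ for flats $H \supsetneq F \sqcup G$; this is the key structural change producing the dual decomposition.

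Once $\Phi$ is defined, the remaining steps are to verify it respects the incomparability and linear relations of the source, to confirm compatibility with the $\ch(M) \otimes \ch(N)$-module structure (which should be essentially automatic from the construction), and finally to establish bijectivity. Surjectivity can be obtained by a case analysis on the generators $x_H$ of $\ch(M \oplus N)$: since nonempty proper flats $H$ are precisely disjoint unions $F' \sqcup G'$ of flats of $M$ and $N$, the cases where $F'$ or $G'$ is empty, proper, or full determine which summand provides a preimage. Injectivity then reduces to a comparison of Hilbert series, exploiting the same combinatorial identities as in the proof of \cref{thm1}.

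The main obstacle will be the correct identification of the canonical upward-indexed classes used to define $\Phi$ on each summand, together with the verification that the resulting map is well-defined on the relations of the source; once those are in place, the module-structure check and the dimension count should run largely parallel to the corresponding arguments for \cref{thm1}. A more conceptual alternative would be to attempt to deduce \cref{thm2} directly from \cref{thm1} by combining Poincaré duality on $\ch(M \oplus N)$ with an order-reversing correspondence on the flat lattice that exchanges "nonempty" and "proper"; however, such a duality would need to be reconciled with the $\ch(M) \otimes \ch(N)$-module structure, which adds a nontrivial complication.
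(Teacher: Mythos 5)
Your proposal matches the paper's primary strategy in broad outline: build the dual maps $u'_{F,G}$ by swapping the roles of restriction and contraction in the construction of $u_{F,G}$, and then deduce bijectivity from a dimension count. However, you reverse the logical order of the two halves in a way that matters. The paper proves \emph{injectivity} of $\phi'$ first, by Poincar\'e-pairing orthogonality of the images of the summands (the analogue of Lemma~\ref{lemortogonalnosc} and Theorem~\ref{phiinjective}), and then obtains surjectivity ``for free'' once the Hilbert series of both sides are shown to agree using the Chow function identity~\eqref{rowH2}. You propose to prove \emph{surjectivity} first ``by a case analysis on the generators $x_H$'' and then get injectivity from the dimension count. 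That case analysis is the genuinely hard direction: finding preimages of degree-$1$ generators $x_{F'\cup G'}$ does not suffice, because $\Phi$ is a module map (not a ring map) and one must show every monomial in $\ch(M\oplus N)$ lies in the $\ch(M)\otimes\ch(N)$-span of the images, navigating the relations in $I$ and $J$. The paper explicitly remarks that a direct proof of surjectivity along these lines is possible but ``considerably less enlightening'' than the route through Chow functions of posets; as written, your surjectivity step is a nontrivial gap that would need substantial work to fill.

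Two smaller points. First, the description of the map on a summand as ``multiplication by a canonical class using generators $x_H$ for $H\supsetneq F\sqcup G$'' is imprecise: the map is the pushforward $\underline{\psi}^{F\cup G}_{M\oplus N}$ precomposed with $\mathrm{id}\otimes\iota_{M_F,N_G}$, which after $\underline{\varphi}^{F\cup G}$ is indeed multiplication by $x_{F\cup G}$, not by a product of classes above $F\cup G$; the actual change from Theorem~\ref{thm1} to Theorem~\ref{thm2} is which of the two tensor factors $\ch(M_F\oplus N_G)$ or $\ch(M^F\oplus N^G)$ is kept ``intact'' and which is reached only via the image of $\iota$. Second, your ``conceptual alternative'' via Poincar\'e duality plus an order-reversing correspondence on the flat lattice is unlikely to work: lattices of flats of matroids admit no such order-reversal in general (this is not matroid duality). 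The paper's own alternative proof of Theorem~\ref{thm2} is different from what you sketch: it proceeds by induction on $\mathrm{rk}(M)+\mathrm{rk}(N)$, applies Theorem~\ref{thm2} recursively to the smaller direct sums $M^F\oplus N^G$, and matches the resulting decomposition into irreducibles against Corollary~\ref{corirred}; no Poincar\'e duality or order-reversal is invoked.
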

\begin{remark}
    Assuming that $N$ is a matroid with a ground set consisting of one element we recover the first part of Proposition \ref{ssddlachow}. In that case the decompositions in Theorem \ref{thm1} and Theorem \ref{thm2} agree, however this is not the case in general.
\end{remark}
The most fundamental example of a matroid is the boolean matroid on the ground set $[n]$, that is a matroid for which all subsets of $[n]$ are flats. We denote the Hilbert series of the boolean matroid by
\begin{align*}
    A_n(x) = \underset{k=0}{\sum^n} A(n,k)x^k.
\end{align*}
It is the Hilbert function of the permutohedral variety and the coefficients $A(n,k)$ are called the \textit{Eulerian numbers}. Note that the direct sum of boolean matroids with ground sets $[n]$ and $[m]$ is a boolean matroid on the set $[n+m]$. Thus we can apply the decomposition above to a pair of boolean matroids to get the following new recursive formula for the Eulerian numbers:
\begin{corollary}\label{coreuler}
    The Eulerian polynomials satisfy the following relation:
    \begin{align*}
        A_{n+m}(x) = A_n(x) \cdot A_m(x) + x\cdot\underset{0<r \leq n, 0<s \leq m}{\sum}\binom{n}{r} \binom{m}{s}  A_r(x) A_s(x) A_{n+m-r-s}(x).
    \end{align*}
\end{corollary}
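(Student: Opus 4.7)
The plan is to apply Theorem~\ref{thm1} to the pair of boolean matroids $M = B_n$ and $N = B_m$ on $[n]$ and $[m]$, and then read off the resulting identity of Hilbert series.

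A first, purely matroid-theoretic step is to check that the class of boolean matroids is closed under the three operations appearing in the decomposition. Since every subset of $[n] \sqcup [m]$ is a flat, the direct sum $B_n \oplus B_m$ is again a boolean matroid, now on $[n+m]$, so its Chow ring has Hilbert series $A_{n+m}(x)$. For a flat $F \subseteq [n]$ of size $r$, the restriction $(B_n)^F$ and the contraction $(B_n)_F$ are again boolean, equal to $B_r$ and $B_{n-r}$ respectively, and there are $\binom{n}{r}$ such flats of size $r$. In particular, $(B_n)_F \oplus (B_m)_G$ is the boolean matroid on $n+m-r-s$ elements whenever $|F| = r$ and $|G| = s$.

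I would then substitute this data into the isomorphism of Theorem~\ref{thm1} and take Hilbert series of both sides. Using that Hilbert series are additive on direct sums of graded modules, multiplicative on tensor products, and that the grading shift $[-1]$ multiplies by $x$, the summand corresponding to a pair of nonempty flats $(F,G)$ with $|F|=r$, $|G|=s$ contributes $x\, A_{n+m-r-s}(x)\, A_r(x)\, A_s(x)$ with multiplicity $\binom{n}{r}\binom{m}{s}$; summing over $1\le r \le n$ and $1 \le s \le m$ and adding the $A_n(x)A_m(x)$ term from the first summand of Theorem~\ref{thm1} gives the asserted recursion. I do not expect any real obstacle: once Theorem~\ref{thm1} is granted, the corollary is a purely formal computation. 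The only mild point to watch is the convention $M_F$ versus $M^F$, so that the three factors $A_r$, $A_s$ and $A_{n+m-r-s}$ land in exactly the positions of the formula.
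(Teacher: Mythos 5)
Your proposal is correct and matches the paper's (very brief) derivation: apply Theorem~\ref{thm1} to the boolean matroids $B_n$ and $B_m$, note that all restrictions, contractions, and direct sums of boolean matroids are boolean, and take Hilbert series, using that there are $\binom{n}{r}$ flats of size $r$ in $B_n$. The paper states the corollary as an immediate consequence without spelling out the bookkeeping; you have simply filled in the same routine computation.
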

In the case $m=1$, the formula above specializes to \cite[Theorem 1.5]{Eulerianbook}.

Applying Theorem \ref{thm1} recursively leads to a decomposition of $\ch(M\oplus N)$ into irreducible $\ch(M) \ot \ch(N)$-modules, generalising the results of \cite{ssd}.

\begin{restatable}{corollary}{corrr}\label{corirred}
    The Chow ring $\ch(M \oplus N)$ decomposes into the direct sum
    \begin{align*}
    \underset{\substack{ \emptyset \subsetneq F_1 \subsetneq \ldots \subsetneq F_k \subseteq M \\ \emptyset \subsetneq G_1 \subsetneq \ldots \subsetneq G_k \subseteq N}}{\bigoplus} \ch(M^{F_1}) \otimes \ch(M_{F_1}^{F_2}) \otimes \ldots \otimes \ch(M_{F_k})
    \otimes \ch(N^{G_1}) \otimes \ch(N_{G_1}^{G_2}) \otimes \ldots \otimes \ch(N_{G_k})[-k]
    \end{align*}
    of $\ch(M) \otimes \ch(N)$-modules, where we treat the length-$0$ flag as $\ch(M) \otimes \ch(N)$.
\end{restatable}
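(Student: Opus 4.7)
The plan is to prove Corollary~\ref{corirred} by induction on $|E(M)|+|E(N)|$, using Theorem~\ref{thm1} to perform a single step of the recursion.

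For the base case, I would take $E(M)=\emptyset$ (the case $E(N)=\emptyset$ is symmetric). Then $M\oplus N\cong N$, $\ch(M)$ is the base ring, and $M$ has no nonempty flat, so the right-hand side collapses to the length-$0$ summand $\ch(M)\otimes\ch(N)\cong\ch(N)$.

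For the inductive step, I would first apply Theorem~\ref{thm1} to obtain
\[
\ch(M\oplus N)\simeq \ch(M)\otimes\ch(N)\ \oplus\ \bigoplus_{\substack{\emptyset\subsetneq F_1\subseteq M\\ \emptyset\subsetneq G_1\subseteq N}}\ch(M_{F_1}\oplus N_{G_1})\otimes\ch(M^{F_1})\otimes\ch(N^{G_1})[-1].
\]
For each admissible $(F_1,G_1)$ the matroid $M_{F_1}\oplus N_{G_1}$ has strictly smaller total ground set, so the induction hypothesis expresses $\ch(M_{F_1}\oplus N_{G_1})$ as a sum over pairs of flags in $M_{F_1}$ and $N_{G_1}$ of some common length $k-1\geq 0$ (the case $k-1=0$ yielding $\ch(M_{F_1})\otimes\ch(N_{G_1})$). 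Under the standard bijection between flats of $M_{F_1}$ and flats of $M$ containing $F_1$, the identifications $(M_{F_1})^{F_i'}=M_{F_1}^{F_i}$, $(M_{F_1})_{F_i'}^{F_{i+1}'}=M_{F_i}^{F_{i+1}}$ and $(M_{F_1})_{F_k'}=M_{F_k}$ convert such a flag in $M_{F_1}$ into a flag $F_1\subsetneq F_2\subsetneq\ldots\subsetneq F_k\subseteq M$ of length $k$ starting at $F_1$, and similarly on the $N$-side. Substituting the inductive decomposition and tensoring with $\ch(M^{F_1})\otimes\ch(N^{G_1})[-1]$ then produces exactly the summands of the corollary indexed by pairs of length-$k$ flags starting at $(F_1,G_1)$, with total shift $[-k]$. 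Summing over all valid $(F_1,G_1)$ and adding the leading $\ch(M)\otimes\ch(N)$ term assembles the full decomposition; the extremal cases $F_k=M$ or $G_k=N$ appear automatically, since choosing a top flat at the final recursive step simply makes the factor $\ch(M_{F_k})$ or $\ch(N_{G_k})$ degenerate to the base ring.

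The one step requiring care will be the compatibility of module structures. Theorem~\ref{thm1} provides an isomorphism of $\ch(M)\otimes\ch(N)$-modules, whereas the induction hypothesis delivers only a $\ch(M_{F_1})\otimes\ch(N_{G_1})$-module isomorphism, which has to be promoted to a $\ch(M)\otimes\ch(N)$-module isomorphism after tensoring with $\ch(M^{F_1})\otimes\ch(N^{G_1})$. This uses the natural ring maps $\ch(M)\to\ch(M_{F_1})\otimes\ch(M^{F_1})$ and $\ch(N)\to\ch(N_{G_1})\otimes\ch(N^{G_1})$ that already underlie the module structure in Theorem~\ref{thm1}; everything else is routine bookkeeping of flags and cumulative degree shifts.
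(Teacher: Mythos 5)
Your proof is correct and follows essentially the same route as the paper's: both apply Theorem~\ref{thm1} recursively and track how flags in $M_{F_1}$ (resp.\ $N_{G_1}$) correspond to flags in $M$ containing $F_1$ (resp.\ in $N$ containing $G_1$), with the paper phrasing this as an unrolled recursion and you phrasing it as an induction on the total ground-set size. The only blemish is a small indexing slip in your identifications (e.g.\ $(M_{F_1})^{F_i'}$ should be $M_{F_1}^{F_{i+1}}$ once the flag $F_1'\subsetneq\dots\subsetneq F_{k-1}'$ in $M_{F_1}$ is relabeled as $F_2\subsetneq\dots\subsetneq F_k$ in $M$), which does not affect the validity of the argument.
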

The irreducibility of each factor is clear, as they are quotients of $\ch(M) \otimes \ch(N)$.\\

We also give a decomposition of the augmented Chow ring $\ach(M\oplus N)$ of a direct sum of matroids as a direct sum of $\ach(M) \otimes \ach(N)$-modules.

\begin{restatable}{theorem}{thmaugg}\label{thmaug}
    There is an isomorphism:
    \[ \ach(M\oplus N) \simeq \ach(M) \otimes \ach(N) \oplus \underset{F\subsetneq M, G \subsetneq N}{\bigoplus} \ach(M^F \oplus N^G) \otimes \ch(M_F)\otimes \ch(N_G)  [-1] \]
    of $\ach(M) \otimes \ach(N)$-modules, where the direct sum runs over all pair of proper flats $F\subsetneq M$, $G \subsetneq N$.
\end{restatable}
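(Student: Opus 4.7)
The strategy is to adapt the proof of Theorem \ref{thm2} to the augmented setting: exhibit an explicit $\ach(M) \otimes \ach(N)$-linear map from the right-hand side into $\ach(M \oplus N)$, verify it respects the defining relations of the augmented Chow ring, and conclude isomorphism by a Hilbert series comparison.

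The first step is to construct the candidate map $\Phi$ summand by summand. The copy of $\ach(M) \otimes \ach(N)$ maps in via the natural homomorphism induced by the inclusion of ground sets, sending $y_i \otimes 1 \mapsto y_i$ and $x_F \otimes 1 \mapsto x_{F \sqcup \emptyset}$ (and symmetrically for the second tensor factor). For each pair of proper flats $F \subsetneq M$, $G \subsetneq N$, send the distinguished class $1 \otimes 1 \otimes 1$ (shifted to degree one) to $x_{F \sqcup G} \in \ach(M \oplus N)$, and extend $\ach(M)\otimes \ach(N)$-linearly by sending $x$-generators of $\ach(M^F \oplus N^G)$ to $x$-generators indexed by flats strictly containing $F \sqcup G$, the atom generators $y_i$ of $\ach(M^F \oplus N^G)$ (with $i \notin F \sqcup G$) to $y_i$, and the $x$-generators of $\ch(M_F)$ and $\ch(N_G)$ to $x$-generators indexed by the corresponding flats strictly contained in $F \sqcup G$.

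The principal obstacle is verifying well-definedness: one must check that the Orlik--Solomon type relations $x_S x_{S'} = 0$ for incomparable flats, the linear relations $y_i = \sum_{S \ni i} x_S$, and the vanishing $x_S y_i = 0$ for $i \notin S$ all pull back consistently to each summand. In the non-augmented case (Theorem \ref{thm2}) this involves only the $x$-generators, but here the atom generators $y_i$ must be carefully routed: $y_i$ with $i \in F \sqcup G$ must vanish against $x_{F \sqcup G}$, mirroring the fact that such atoms become zero in $\ch(M_F)$ and $\ch(N_G)$, while $y_i$ with $i \notin F \sqcup G$ survives and matches the corresponding atom of $\ach(M^F \oplus N^G)$. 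Once this bookkeeping is completed, the defining relations transfer from each summand to $\ach(M \oplus N)$, and $\Phi$ is a well-defined $\ach(M)\otimes \ach(N)$-linear map.

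With $\Phi$ well defined, injectivity and surjectivity follow from equality of Hilbert series of both sides. One route is to compute the Hilbert series of $\ach(M \oplus N)$ by iteratively applying the augmented coloop decomposition of Proposition \ref{ssddlachow}(2) across the elements of one ground set, and to match it with the Hilbert series of the right-hand side after reorganising its indexing by flags of flats in $M$ and $N$. Alternatively, one may realise $\ach$ as the Chow ring of a suitable matroid obtained by adjoining a free element and deduce the result more directly from Theorem \ref{thm2}; either route reduces the verification to the same combinatorial identity among generating functions for flags, which is transparent from the indexing set on the right.
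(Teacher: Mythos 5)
The central logical step is flawed: you claim that once $\Phi$ is well defined, ``injectivity and surjectivity follow from equality of Hilbert series of both sides.'' This is not true — a well-defined graded linear map between two spaces with the same Hilbert series need not be injective or surjective. The paper closes this gap with a separate, nontrivial injectivity argument (Theorem \ref{augphiinjective}): it shows, via the Poincaré pairing on $\ach(M\oplus N)$, that the images of the different summands are pairwise orthogonal and that each $\tilde{u}_{F,G}$ preserves the Poincaré pairing up to a constant (the augmented analogue of Lemma \ref{lemortogonalnosc} and Lemma \ref{zach}). Only then does the Hilbert series comparison upgrade injectivity to an isomorphism. Without some such independent injectivity or surjectivity input, your argument does not conclude.

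Two further issues. First, your description of the map on the summand $\ach(M^F\oplus N^G)\otimes \ch(M_F)\otimes\ch(N_G)$ has restriction and contraction swapped: $M^F$ is the restriction to $F$, so the $x$-generators of $\ach(M^F\oplus N^G)$ correspond to flats \emph{strictly contained in} $F\sqcup G$ and its atoms are $y_i$ for $i\in F\sqcup G$; meanwhile $\ch(M_F)$, $\ch(N_G)$ supply the flats \emph{strictly containing} $F$, $G$. Your routing of the atoms inherits this confusion — note also that $\ch(M_F)$ has no $y$-generators at all, so ``such atoms become zero in $\ch(M_F)$'' does not quite parse. Second, neither of your proposed routes to the Hilbert series identity clearly works: iterating Proposition \ref{ssddlachow}(2) requires the element deleted to be a coloop at each stage, which fails for a general loopless $N$; and realizing $\ach$ as $\ch$ of a free (co)extension does not reduce to Theorem \ref{thm2}, since the free coextension of $M\oplus N$ is not the direct sum of the coextensions of $M$ and $N$. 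The paper instead obtains the Hilbert series identity from the augmented Chow function relation, Equation \ref{gkerrel}, evaluated on the lattice of flats via Proposition \ref{HGtochow}(2); that is the mechanism your proof is missing.
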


Similarly, this decomposition can be used to find the decomposition of $\ach(M\oplus N)$ into irreducible $\ach(M) \otimes \ach(N)$-modules.
\begin{remark}
    In the case that $N$ is a matroid with one-element ground set $\{i\}$ we recover a result refining the decomposition in the second part of Proposition \ref{ssddlachow} by additionally taking into account the action of the factor $\mathbb{Q}[y_i]/(y_i^2)$ in the tensor product $\ach(M) \otimes \ach(N)$.
\end{remark}

A key ingredient in the proofs of Theorem \ref{thm1}, Theorem \ref{thm2}, and Theorem \ref{thmaug} are the analogous equalities of Chow functions of weakly ranked posets introduced in \cite{CF25}, see Section \ref{subsecposet} for a summary of basic notions in KLS-theory.

\begin{restatable}{theorem}{chowwfun}\label{thmchowfun}
    Let $H_1, H_2$, and $H$ be the Chow functions of $\mathcal{P}_1$, $\mathcal{P}_2$, and $\mathcal{P}_1 \times \mathcal{P}_2$ with respect to $\kappa_1$, $\kappa_2$, and $\kappa$ respectively, viewed as elements of $\mathrm{Mat_{\mathcal{P}\times\mathcal{P}}}(\mathbb{Z}[x])$. Then the following relations hold:
    \begin{align}\label{rowH1}
        H = H_1 \otimes H_2 + x H\cdot (H_1 \otimes H_2) - xH\cdot (I \otimes H_2) - xH\cdot (H_1 \otimes I) + xH,
    \end{align}
    and
    \begin{align}\label{rowH2}
        H = H_1 \otimes H_2 + x(H_1 \otimes H_2)\cdot H - x(I \otimes H_2)\cdot H - x(H_1 \otimes I)\cdot H + xH.
    \end{align}
\end{restatable}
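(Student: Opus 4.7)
Proof plan. The identities (\ref{rowH1}) and (\ref{rowH2}) are related by swapping the order of matrix multiplication, so it is enough to prove (\ref{rowH1}); the argument for (\ref{rowH2}) is identical, with left and right products exchanged.

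A useful first step is to rewrite (\ref{rowH1}) in a cleaner factored form. Using
\[
  (H_1 - I) \otimes (H_2 - I) = H_1 \otimes H_2 - I \otimes H_2 - H_1 \otimes I + I,
\]
identity (\ref{rowH1}) becomes equivalent to
\[
  H \cdot \bigl( I - x(H_1 - I) \otimes (H_2 - I) \bigr) = H_1 \otimes H_2.
\]
The matrix $(H_1 - I) \otimes (H_2 - I)$ is strictly upper-triangular in the sense that its $((p_1,p_2),(q_1,q_2))$-entry is zero unless $p_i < q_i$ for \emph{both} $i$, so the left factor is invertible in $\mathrm{Mat}_{\mathcal{P}\times\mathcal{P}}(\mathbb{Z}[x])$, and (\ref{rowH1}) is equivalent to the compact formula
\[
  H = (H_1 \otimes H_2) \cdot \bigl( I - x(H_1 - I) \otimes (H_2 - I) \bigr)^{-1}.
\]

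The plan is to prove this by appealing to the uniqueness of the Chow function from \cite{CF25}: $H$ is characterized as the unique upper-triangular matrix with $1$s on the diagonal satisfying both a Kazhdan--Lusztig-type self-duality relation (relating $H$ to $\kappa$ through an involution of $\mathbb{Z}[x]$) and a degree bound on off-diagonal entries. Setting $\widetilde{H} := (H_1 \otimes H_2) \cdot (I - x(H_1 - I) \otimes (H_2 - I))^{-1}$ and expanding the inverse as a geometric series (which terminates entrywise because intervals in the product poset have finite rank), one checks the three properties in turn. Upper-triangularity with $1$s on the diagonal is automatic, and the self-duality relation for $\widetilde{H}$ relative to the product weak ranking $\kappa = \kappa_1 \otimes \kappa_2$ follows by a formal computation from the corresponding self-dualities of $H_1, H_2$, using that the involution commutes with tensor products.

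The principal obstacle is verifying the degree bound for $\widetilde{H}$. Each term in the geometric expansion has the shape $x^k \bigl((H_1-I) \otimes (H_2-I)\bigr)^k$, corresponding to a sum over chains strictly increasing both coordinates at every step; one must show that the $x$-power $k$, together with the $H_i$-degrees accumulated along each chain, respects the bound prescribed by the total rank in $\mathcal{P}_1 \times \mathcal{P}_2$. A careful degree count, using that every step of such a chain contributes a positive rank increment in \emph{both} factor posets and invoking the individual degree bounds for $H_1$ and $H_2$, yields the desired inequality. Once this last property is established, uniqueness forces $\widetilde{H} = H$, completing the proof of (\ref{rowH1}).
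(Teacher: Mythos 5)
The paper's proof is a short direct computation: one first verifies, by comparing entries and treating the diagonal and off-diagonal cases separately, that the reduced kernels satisfy
\[
\overline{\kappa} = (x-1)\,(\overline{\kappa_1}\otimes\overline{\kappa_2}) + x\,(\overline{\kappa_1}\otimes I) + x\,(I\otimes\overline{\kappa_2}) + xI,
\]
and then multiplies this identity on the left (resp.\ right) by $\overline{\kappa}^{-1}$ and on the right (resp.\ left) by $\overline{\kappa_1}^{-1}\otimes\overline{\kappa_2}^{-1}$, finally substituting $H=-\overline{\kappa}^{-1}$, $H_i=-\overline{\kappa_i}^{-1}$ to obtain (\ref{rowH1}) and (\ref{rowH2}). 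Your algebraic reformulation
\[
H\cdot\bigl(I - x(H_1-I)\otimes(H_2-I)\bigr) = H_1\otimes H_2
\]
is correct and is in fact equivalent to (\ref{rowH1}), but the route you propose to establish it does not go through. The Chow function $H$ is \emph{defined} as $-\overline{\kappa}^{-1}$ (Definition~\ref{defchow}), not characterized by a KLS-type uniqueness theorem combining a self-duality and a degree bound; such a characterization is what pins down the KLS functions $f$ and $g$, not $H$. There is therefore no uniqueness principle available of the form you invoke, and even if one tried to substitute the defining condition $\widetilde{H}\,\overline{\kappa}=-I$ for your ``self-duality relation,'' verifying it for $\widetilde{H}=(H_1\otimes H_2)(I-x(H_1-I)\otimes(H_2-I))^{-1}$ would force you to expand $\overline{\kappa}$ in terms of $\overline{\kappa_1},\overline{\kappa_2}$ --- that is, to prove exactly the kernel identity that the paper's proof rests on, at which point the uniqueness detour is superfluous. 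In addition, you explicitly flag the degree-bound step as the ``principal obstacle'' and leave it unproved, so even on its own terms the argument is not complete. The conceptually missing ingredient is the entrywise kernel identity above; once you have it, (\ref{rowH1}) and (\ref{rowH2}) follow by one-line matrix algebra, and your factored reformulation would then be a pleasant corollary rather than a starting point.
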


We also prove a similar result for the augmented Chow functions of weakly ranked posets.

\subsection{Acknowledgments}

I would like to express my deepest gratitude to Botong Wang for suggesting the problem. I would like to thank the Institute of Advanced Study for a great working environment. I thank Louis Ferroni, June Huh, and Mateusz Michałek for many helpful discussions. The author's visit as part of the Special Year on Algebraic and Geometric Combinatorics was funded as part of the action I.1.5. of the IDUB \textit{Excellence Initiative Research Univeristy} program at the University of Warsaw.

\section{Preliminaries}

\subsection{Matroids and their Chow rings} \label{subsectionmatroids}

Ever since their introduction in \cite{originalmatroid}, as a combinatorial generalization of an arrangement of points in a projective space, matroids have played an important role in combinatorics and geometry. The datum of a matroid $M$ consists of a ground set $[m]$ and a set of subsets $F \subseteq [m]$ called flats, satisfying the following axioms:
\begin{itemize}
    \item The whole ground set $[m]$ is a flat.
    \item The intersection $F_1 \cap F_2$ of any two flats is a flat. 
    \item For a flat $F$ and an element $a \in [m]\backslash F$ there exists a unique flat $F'$ such that $F \cup a \subseteq F'$, and if $F \cup a \subseteq F''$ for any other flat $F''$, then $F' \subset F''$.
\end{itemize}
Matroids can be equivalently characterized in terms of independent sets, basis, or circuits. For a thorough introduction to the theory of matroids see \cite{Oxley}. Let $V$ be a $\Bbbk$-vector space. The axioms above are in particular satisfied if we are given a configuration of points $\{ p_i \}_{i \in [m]}$ in the projective space $\mathbb{P}(V)$ and we define a flat to be a maximal subset of points lying on some fixed linear subspace of $\mathbb{P}(V)$. In that case we say that the matroid is realizable over $\Bbbk$. An element $i\in M$ is called a coloop of $M$ if and only if the subset $M\backslash i$ is a flat. In this paper we only consider loopless matroids, that is, we always assume that the empty set is a flat.
We define the following matroid operations:
\begin{definition}
    \begin{enumerate}
        \item Let $S$ be a subset of $M$. Then the matroid $M^S$, called the \textit{restriction} of $M$ to $S$, is defined as a matroid with a ground set $S$, whose flats are flats of $M$ contained in $S$. For $S = M \backslash i$ we denote the restriction $M^S$ by $M \backslash i$ and call the operation the \textit{deletion} of $i$ in $M$.
        \item Let $F$ be a flat of $M$. Then the matroid $M_F$, called the \textit{contraction} of $M$ with respect to $F$, is defined as a matroid with a ground set $M\backslash F$, whose flats are given by $G\backslash F$ for flats $G$ of $M$ containing $F$.
    \end{enumerate}
\end{definition}
In the following we denote by $d$ be the rank of the matroid $M$, that is the length of its maximal flag of proper flats. In \cite{Chowringintroduced} the authors assigned to a matroid $M$ a graded commutative $\mathbb{Q}$-algebra $\ch(M) = \bigoplus\limits_{i=0}^{d-1} \ch^i(M)$, called the Chow ring of $M$. It is defined by the quotient:
\[ \ch(M) := \mathbb{Q}[x_F \ |\ F \text{ is a proper flat of }M]/(I+J), \]
where:
\begin{itemize}
    \item $I$ is the ideal generated by the linear relations
    \[ \underset{i \notin F}{\sum} x_F - \underset{j \notin G}{\sum}x_G \]
    for elements $i,j \in [m]$.
    \item $J$ is the ideal generated by the so called incomparability relations
    \[ x_{F}x_{G} \]
    for pairs of incomparable flats $F,G$ of $M$.
\end{itemize}
In the case that the matroid $M$ is realizable over $\Bbbk$ the ring $\ch(M)$ is the Chow ring of a smooth projective variety over $\Bbbk$. This variety can be viewed as the wonderful compactification of the complement of the hyperplanes defined by $\{  p_i \}_{i\in [m]}$ in $\mathbb{P}(V^{\vee})$. Let us recall its construction. Denote by $\mathcal{M}$ the arrangement of hyperplanes corresponding to $M$ and denote by $H_i$ the hyperplane corresponding to $p_i$. For $F \subseteq M$ denote
\[ H_F := \underset{i \in F}{\bigcap} H_i . \]
Then the wonderful model of $\mathcal{M}$, denoted by $\underline{X}_{M}$, is obtained from $\mathbb{P}(V^{\vee})$ by first blowing up $H_F$ for all corank $1$ flats $F$, then for all corank $2$ flats, etc. The resulting variety clearly contains an open subset given by the complement of all $H_i$ in $\mathbb{P}(V^{\vee})$. We will use the description above to give a geometric interpretation of the decomposition in Theorem \ref{thm1} in the realizable case. We slightly abuse the notation by using the matroid in the lower index to denote the wonderful model, as this will simplify our notation when considering restrictions and localizations of arrangements. \\
In general, $\ch(M)$ is the Chow ring of a toric variety $X(\underline{\Sigma}_M)$ associated to the so called Bergman fan $\underline{\Sigma}_M$ of $M$ (see Definition \ref{defbergman}).\\
There exists a degree map
\[ \underline{\mathrm{deg}}_{M}: \ch(M) \rightarrow \mathbb{Q} \]
that is zero on $\ch^{<d-1}(M)$. The degree map is defined by taking the value $1$ on maximal flags
\[ x_{F_1}\cdot \ldots \cdot x_{F_{d-1}} \]
of nonempty proper flats of $M$.
It is a remarkable result of \cite{AHKcomb} that even in the non-realizable case the Chow ring $\ch(M)$ carries the so-called Kähler package, mimicking the properties of a Chow ring of a smooth projective variety.
\begin{prop}
Fix an element $\underline{l}$ of $\ch^1(M)$ corresponding to a strictly convex piecewise linear function on the Bergman fan $\underline{\Sigma}_M$ of $M$
    \begin{enumerate}
        \item (Poincaré duality) Let $k < \frac{d}{2}$. There exists a non-degenerate bilinear pairing
        \[ (-,-)_{\ch(M)}:\ch^k(M) \times \ch^{d-k-1}(M) \rightarrow \mathbb{Q} \]
        given by the formula
        \[ (a,b)_{\ch(M)} = \underline{\mathrm{deg}}_{M}(a\cdot b). \]
        \item (Hard Lefschetz) Let $k < \frac{d}{2}$. Then the map
        \[ \ch^k(M) \rightarrow \ch^{d-k-1}(M) \ \text{ given by } a \mapsto \underline{l}^{d-2k-1}\cdot a \]
        is an isomorphism.
        \item (Hodge-Riemann relations) Let $k < \frac{d}{2}$. Then the bilinear form
        \[ \ch^k(M) \times \ch^k(M) \rightarrow \mathbb{Q}  \ \text{ given by } (a_1,a_2) \mapsto (-1)^k\underline{\mathrm{deg}}_{M}(\underline{l}^{d-2k-1}a_1a_2) \]
        is positive definite on the kernel of multiplication by $\underline{l}^{d-2k}$.
    \end{enumerate}
\end{prop}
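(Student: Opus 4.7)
The plan is to follow the inductive strategy of \cite{AHKcomb}: prove the three parts of the Kähler package together by a double induction on the rank $d$ of $M$ and on the size of the ground set, with base cases given by matroids where $\ch(M)$ reduces to that of a point or a projective space and the classical Kähler identities apply directly.

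Poincaré duality is the most accessible ingredient. One first observes that $\ch^{d-1}(M)$ is one-dimensional, generated by the class of any complete flag $x_{F_1}\cdots x_{F_{d-1}}$, using the incomparability relations $J$ together with the linear relations $I$; the degree map $\underline{\mathrm{deg}}_M$ then identifies it with $\mathbb{Q}$. Non-degeneracy of the pairing in intermediate degrees follows from an explicit monomial (Feichtner--Yuzvinsky) basis of $\ch(M)$ combined with a direct computation on the pairing matrix, or alternatively from the toric geometry of $X(\underline{\Sigma}_M)$ in the realizable case plus a combinatorial reduction.

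For Hard Lefschetz and Hodge--Riemann, the central mechanism is the transfer of the Kähler package across a single \emph{matroid flip}: picking a suitable flat $F$ of $M$, one passes to a combinatorially modified Bergman fan obtained from $\underline{\Sigma}_M$ by one star subdivision, yielding an explicit decomposition of $\ch(M)$ in terms of a ``smaller'' Chow ring together with relative summands, all satisfying the Kähler package by the inductive hypothesis. Hard Lefschetz can then be propagated through this decomposition by a direct computation on generators, using the non-degeneracy of the Poincaré pairing to reduce the required isomorphism to a matter of comparing dimensions.

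The main obstacle is Hodge--Riemann, which requires tracking signatures rather than ranks. The key principle is that the signature of a non-degenerate symmetric bilinear form is locally constant in any continuous family: deforming $\underline{l}$ within the open cone of strictly convex piecewise linear functions on $\underline{\Sigma}_M$, one can interpolate between a reference element supplied by the inductive hypothesis on the flipped matroid and the arbitrary $\underline{l}$ at hand. Provided Hard Lefschetz persists along the path, the Hodge--Riemann signature cannot jump, and an orthogonal decomposition of the bilinear form induced by the flip then yields Hodge--Riemann for $\ch(M)$, closing the induction.
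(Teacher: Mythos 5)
The paper gives no proof of this proposition: the surrounding text attributes it to \cite{AHKcomb}, where the Kähler package for Chow rings of arbitrary loopless matroids is the main theorem, and the statement is simply quoted. So there is no in-paper argument for your sketch to be compared against.

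That said, your sketch is a reasonable high-level summary of the Adiprasito--Huh--Katz strategy: double induction, Poincaré duality from an explicit description of the top-degree component together with a monomial basis, propagation of Hard Lefschetz and Hodge--Riemann along a chain of combinatorial modifications of the Bergman fan, and signature continuity as the engine for Hodge--Riemann. Two points deserve more care before this would count as a proof outline. First, the ``matroid flip'' of \cite{AHKcomb} is not a single star subdivision at a flat $F$ of $M$; it is a move between fans indexed by order filters $\mathcal{P}$ in the lattice of flats, interpolating between the Bergman fan of $M$ and a much simpler base fan, and each flip relates the corresponding Chow rings via a pair of exact sequences (a Gysin-type and a pullback-type), not a one-step blow-up decomposition. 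Second, the logic in closing the induction is more delicate than ``provided Hard Lefschetz persists along the path'': in \cite{AHKcomb}, Hard Lefschetz in a given degree is itself deduced from Hodge--Riemann in lower degrees on the pieces produced by the flip, and only then does the signature-continuity argument propagate Hodge--Riemann. These are precisely the places where the technical content of the Adiprasito--Huh--Katz proof lives, and a complete argument must supply them; your sketch correctly identifies the architecture but leaves these load-bearing steps as black boxes.
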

Those results led to the proof of the Heron-Rota-Welsh conjecture concerning the log-concavity of the coefficients of the reduced characteristic polynomial of a matroid. In the realizable case over a field of characteristic zero the properties above follow from classical Hodge theory of the corresponding wonderful model \cite{Huhloggraph}.\\
In \cite{ssd} the authors introduced the \textit{augmented Chow ring} $\ach(M)$ of a matroid $M$, a commutative graded algebra $\ach(M) = \bigoplus\limits_{i=0}^{d} \ach^i(M)$ that interpolates between the Chow ring of $M$ and the Möbius algebra of $M$. It is defined by
\[ \ach(M) := \left( \mathbb{Q}[x_F |\ F \text{ is a proper flat of }M ] \otimes \mathbb{Q}[y_i | \ i \in [m]]\right)/(I+J), \]
where:
\begin{itemize}
    \item $I$ is the ideal generated by the linear relations
    \[ y_i - \underset{i\notin F}{\sum} x_F \]
    for any element $i \in [m]$, where the sum runs over all flats of $F$ not containing $i$.
    \item $J$ is the ideal generated by the so called incomparability relations
    \[ x_Fx_G \ \text{ for incomparable flats }F,G \text{ of }M \]
    and
    \[ y_ix_F \ \text{ for } i\notin F. \]
\end{itemize}
Similarly to the classical case, the augmented Chow ring of a realizable matroid is a Chow ring of the smooth projective variety $X_{\mathcal{M}}$. We have a surjective map
\[ \ach(M) \twoheadrightarrow \ch(M) \]
given by sending $y_i \mapsto 0$ for all $i\in [m]$. There exists a unimodular fan called the augmented Bergman complex $\Sigma_M$ (see Definition \ref{defbergman}) such that $\ach(M)$ is the Chow ring of the associated toric variety. We also have a degree map
\[ \mathrm{deg}_M: \ach(M) \rightarrow \mathbb{Q}, \]
which is zero on $\ach^{<d}(M)$ and takes value $1$ on monomials corresponding to maximal cones in the augmented Bergman fan $\Sigma_M$ (see Definition \ref{defbergman}), that is
\[ \left( \prod\limits_{i=1}^ky_i\right)\cdot x_{F_1}\cdot \ldots \cdot x_{F_{d-k}},  \]
 where $F_1\subsetneq \ldots \subsetneq F_{d-k}$ form a flag of proper flats in $M$ and $y_1,\ldots,y_k \in F_1$ form an independent set. The augmented Chow ring also carries a Kähler package.
\begin{prop}\cite[Theorem 1.6]{ssd}
Fix an element $l$ of $\ach^1(M)$ corresponding to a strictly convex piecewise linear function on the augmented Bergman fan $\Sigma_M$ of $M$. Then the following holds:
    \begin{enumerate}
        \item (Poincaré duality) Let $k \leq \frac{d}{2}$. There exists a non-degenerate bilinear pairing
        \[ (-,-)_{\ach(M)}:\ach^k(M) \times \ach^{d-k}(M) \rightarrow \mathbb{Q} \]
        given by the formula
        \[ (a,b)_{\ach(M)} = \mathrm{deg}_{M}(a\cdot b). \]
        \item (Hard Lefschetz) Let $k \leq \frac{d}{2}$. Then the map
        \[ \ach^k(M) \rightarrow \ach^{d-k}(M) \ \text{ given by } a \mapsto l^{d-2k}\cdot a \]
        is an isomorphism.
        \item (Hodge-Riemann relations) Let $k \leq \frac{d}{2}$. Then the bilinear form
        \[ \ach^k(M) \times \ach^k(M) \rightarrow \mathbb{Q}  \ \text{ given by } (a_1,a_2) \mapsto (-1)^k\mathrm{deg}_{M}(l^{d-2k}a_1a_2) \]
        is positive definite on the kernel of multiplication by $l^{d-2k+1}$.
    \end{enumerate}
\end{prop}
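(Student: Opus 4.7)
The plan is to follow the inductive strategy of \cite{ssd}, proving Poincaré duality, Hard Lefschetz, and Hodge--Riemann simultaneously by induction on the rank $d$ of $M$. The base case $d \leq 1$ is a direct computation since $\ach(M)$ is then concentrated in one or two degrees with an explicit monomial basis. Throughout I would use the three statements as a single package, since Hodge--Riemann in degree $k$ on smaller matroids is what lets one upgrade Hard Lefschetz on $\ach(M)$.

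For the inductive step, I would fix an element $i \in [m]$ and exploit the semi-small decomposition of $\ach(M)$ as a graded module over a smaller augmented Chow ring. When $i$ is a coloop, Proposition \ref{ssddlachow}(2) gives
\[ \ach(M) \simeq \ach(M\backslash i) \oplus \ach(M\backslash i)[-1] \oplus \bigoplus_{\emptyset \subsetneq F \subsetneq M\backslash i} \ach(M^F) \otimes \ch(M_{F\cup i})[-1]; \]
when $i$ is not a coloop, one uses the analogous decomposition relative to $M^{M\backslash i}$. The crucial input, supplied by the inductive hypothesis on rank, is that each summand already carries the full Kähler package (with respect to a suitable Lefschetz class pulled back from the Bergman fans of $M\backslash i$, $M^F$, and $M_{F \cup i}$), since these matroids all have rank strictly less than $d$.

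The three parts then follow in order. Poincaré duality reduces to showing that the above decomposition is orthogonal with respect to the pairing $(a,b)_{\ach(M)} = \mathrm{deg}_M(a \cdot b)$ and that the induced pairing on each summand agrees up to sign with the tensor product of the inductively available Poincaré pairings. This is a direct calculation using the monomial basis of $\ach(M)$ indexed by maximal cones of the augmented Bergman fan $\Sigma_M$, together with the explicit description of $\mathrm{deg}_M$ on flags. For Hard Lefschetz and Hodge--Riemann, I would interpolate between $l$ and a reference class $l_0 \in \ach^1(M\backslash i)$ pulled back to $\ach^1(M)$ for which the decomposition is equivariant: the summand-wise Kähler package from the inductive hypothesis yields HL and HR for $l_0$, and a standard perturbation argument (openness of HR inside the ample cone, plus the fact that strictly convex piecewise linear functions on $\Sigma_M$ form a full-dimensional convex cone) extends both properties along any path to the given $l$.

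The main obstacle is establishing the semi-small property precisely. One must verify that each tensor summand sits in the correct balanced degree range about $d/2$, that the decomposition is genuinely orthogonal with respect to $(-,-)_{\ach(M)}$ and not merely a filtered splitting, and that multiplication by the interpolating class $l_0$ preserves the decomposition up to a controlled nilpotent perturbation compatible with the degree shifts. This rests on a careful combinatorial analysis of $\mathrm{deg}_M$ and of maximal flags of proper flats in $M$ that pass through $F \cup i$, and it is exactly the technical heart of the arguments in \cite{ssd}. Once the semi-small orthogonality is in hand, the formal Hodge-theoretic machinery for semi-small maps converts the inductive Kähler package on the summands into the full Kähler package on $\ach(M)$, closing the induction.
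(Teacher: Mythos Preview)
The paper does not prove this proposition: it is stated in the preliminaries as a background result, explicitly attributed to \cite[Theorem 1.6]{ssd}, and no proof is given. There is therefore no ``paper's own proof'' to compare your proposal against.

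That said, your sketch is an accurate high-level summary of the strategy actually carried out in \cite{ssd}: simultaneous induction on the rank using the semi-small decomposition of $\ach(M)$ over $\ach(M\backslash i)$, orthogonality of the summands with respect to the Poincar\'e pairing, and a deformation argument to pass from a pulled-back Lefschetz class to an arbitrary strictly convex one. One point to be careful about: the perturbation step you describe is not quite ``openness of HR inside the ample cone'' in the usual sense, since for general matroids there is no underlying variety; in \cite{ssd} this is handled by a signature argument showing that the Hodge--Riemann forms cannot degenerate along the interpolation as long as Hard Lefschetz holds, and Hard Lefschetz itself is deduced from Hodge--Riemann on the summands. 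Your phrasing ``up to a controlled nilpotent perturbation'' for how $l_0$ interacts with the decomposition is also slightly off---the point is rather that multiplication by the pulled-back class respects the decomposition exactly, while the correction term coming from the difference $l - l_0$ is what one controls. These are refinements rather than gaps, and your overall outline matches the cited source.
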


\subsection{Pullback and pushforward maps}

We first introduce the Bergman fan of the matroid $M$. For a subset $S$ of $[m]$ (e.g. a flat of $M$ viewed as a subset of the ground set) we denote by $e_S$ the vector $\underset{i\in S}{\sum}e_i \in \mathbb{R}^{[m]}$. We denote by $\rho_{S}$ (respectively $\underline{\rho}_F$) the ray in $\mathbb{R}^{[m]}$ spanned by $e_S$ (respectively the ray in $\mathbb{R}^{[m]}/e_{M}$ spanned by the image of $e_S$).
\begin{definition}{\cite[Definition 2.4]{ssd}} \label{defbergman}
\begin{enumerate}
    \item The Bergman fan $\underline{\Sigma}_M$ of $M$ is a fan in $\mathbb{R}^{[m]}/e_M$, whose cones are given by
    \[ \sigma_{\mathcal{F}} := 
    \mathrm{cone}\{-e_{E\backslash F}\}_{F \in \mathcal{F}}\]
    for $\mathcal{F}$ a flag of nonempty proper flags in $M$. Its rays are $\rho_F$ for flats $F$.
    \item The augmented Bergman fan $\Sigma_M$ of $M$ is a fan in $\mathbb{R}^{[m]}$, whose cones are given by
    \[ \sigma_{I \leq \mathcal{F}} := \mathrm{cone}\{ -e_{E\backslash F} \}_{F\in \mathcal{F}} + \mathrm{cone}\{ e_i \}_{i \in I}  \]
    for $\mathcal{F}$ a flag of proper flats in $M$ and $I$ an independent set in $M$ such that $I \subseteq F$ for every $F \in \mathcal{F}$ (note that this condition is vacuous if $\mathcal{F}$ is the empty flag). Its rays are $\rho_F$ for a flat $F$ and $e_i$ for $i \in [m]$.
\end{enumerate}
\end{definition}
As mentioned in the introduction, the Chow ring (respectively the augmented Chow ring) of a matroid is isomorphic to the Chow ring of the toric variety associated to $\underline{\Sigma}_M$ (respectively to $\Sigma_M$), that is
\[ \ch(M) \simeq \ach(\underline{\Sigma}_M) \ \text{ and } \ \ach(M) \simeq \ach(\Sigma_M).  \]
See \cite[Theorem 3]{Chowringintroduced} or \cite[Section 5.3]{AHKcomb} for the first fact and \cite[Proposition 2.12]{ssd} for the second. We use those isomorphisms to define morphisms of Chow rings by toric geometry. We now describe the stars of the rays $\rho_F$. Recall that the star of a cone $\tau$ of $\Sigma$ is defined as the subset of cones
\[ \mathrm{star}_{\Sigma}(\tau) := \{ \sigma \in \Sigma \ | \ \tau \text{ is a face of } \sigma \}. \]
\begin{prop}{\cite[Proposition 2.7]{ssd}}
    \begin{enumerate}
        \item The star of the ray $\underline{\rho}_F$ in $\underline{\Sigma}_M$ is isomorphic to
        \[ \mathrm{star}_{\underline{\Sigma}_M}(\underline{\rho}_F) \simeq \underline{\Sigma}_{M^F} \times \underline{\Sigma}_{M_F}. \]
        \item The star of the ray $\rho_F$ in $\Sigma_M$ is isomorphic to
        \[ \mathrm{star}_{\Sigma}(\rho_F) \simeq \underline{\Sigma}_{M_F} \times \Sigma_{M^F}. \]
    \end{enumerate}
\end{prop}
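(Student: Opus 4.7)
The plan is to prove both parts by unfolding the definition of the star of a ray as a quotient fan, and matching the resulting cones bijectively with the cones of the claimed product fan on the nose. Since the Bergman and augmented Bergman fans are defined combinatorially in terms of flags and independent sets, the argument is essentially a direct bookkeeping exercise once the ambient lattices are identified.

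First, I would identify the ambient spaces. Recall that the star of a ray $\tau$ in a fan in a lattice $N$ lives in $N/\langle \tau \rangle$ and its cones are the images of cones of the original fan that contain $\tau$. For (1), the ray $\underline{\rho}_F \subset \mathbb{R}^{[m]}/e_M$ is generated by the class of $-e_{M\setminus F}$, which equals the class of $e_F$, so
\begin{align*}
\bigl(\mathbb{R}^{[m]}/\langle e_M\rangle\bigr)\big/\langle \underline{\rho}_F\rangle \;=\; \mathbb{R}^{[m]}/\langle e_F, e_{M\setminus F}\rangle \;\cong\; \bigl(\mathbb{R}^F/\langle e_F\rangle\bigr) \oplus \bigl(\mathbb{R}^{M\setminus F}/\langle e_{M\setminus F}\rangle\bigr),
\end{align*}
which is exactly the ambient lattice of $\underline{\Sigma}_{M^F}\times\underline{\Sigma}_{M_F}$. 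For (2) the ray $\rho_F\subset \mathbb{R}^{[m]}$ is generated by $-e_{M\setminus F}$, and
\begin{align*}
\mathbb{R}^{[m]}/\langle e_{M\setminus F}\rangle \;\cong\; \mathbb{R}^F \oplus \bigl(\mathbb{R}^{M\setminus F}/\langle e_{M\setminus F}\rangle\bigr),
\end{align*}
which is the ambient lattice of $\Sigma_{M^F}\times\underline{\Sigma}_{M_F}$.

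Next, I would match cones. A cone of $\underline{\Sigma}_M$ (resp.\ $\Sigma_M$) containing $\underline{\rho}_F$ (resp.\ $\rho_F$) corresponds to a flag $\mathcal{F}$ of nonempty proper flats (resp.\ proper flats) with $F\in\mathcal{F}$, together in the augmented case with an independent set $I \subseteq F$ (so that $I$ is automatically contained in every flat of $\mathcal{F}$ that lies above $F$). Split $\mathcal{F}=\mathcal{F}_1\sqcup\{F\}\sqcup\mathcal{F}_2$, where flats in $\mathcal{F}_1$ are strictly contained in $F$ and those in $\mathcal{F}_2$ strictly contain $F$. The lattice isomorphism $G\mapsto G\setminus F$ identifies the flats of $M$ strictly above $F$ with the nonempty flats of $M_F$, while the flats of $M^F$ are precisely the flats of $M$ contained in $F$. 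Hence $\mathcal{F}_1$ is a flag in $M^F$ and $\mathcal{F}_2-F$ is a flag in $M_F$, and $I$ is an independent set in $M^F$ contained in every element of $\mathcal{F}_1$.

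Finally I would check that the generators of each cone project to the correct places. A ray generator $-e_{M\setminus G}$ with $G\in\mathcal{F}_1$ decomposes as $-e_{F\setminus G}-e_{M\setminus F}$; since $e_{M\setminus F}$ is killed, it projects to $-e_{F\setminus G}$ in $\mathbb{R}^F$ (or in $\mathbb{R}^F/e_F$ in case (1)), which is exactly the ray of the flat $G$ in $\Sigma_{M^F}$ (resp.\ $\underline{\Sigma}_{M^F}$). A generator $-e_{M\setminus G}$ with $G\in\mathcal{F}_2$ already lies in $\mathbb{R}^{M\setminus F}$, and its image in $\mathbb{R}^{M\setminus F}/e_{M\setminus F}$ equals the class of $e_{G\setminus F}$, which is the ray $\underline{\rho}_{G\setminus F}$ of $\underline{\Sigma}_{M_F}$. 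The generator $-e_{M\setminus F}$ is zero in the quotient, and in the augmented case each generator $e_i$ with $i\in I\subseteq F$ projects to $e_i\in \mathbb{R}^F$, matching the ray $e_i$ of $\Sigma_{M^F}$. Thus $\sigma_{\mathcal{F}}$ (resp.\ $\sigma_{I\leq \mathcal{F}}$) is sent to $\sigma_{\mathcal{F}_1}\times\sigma_{\mathcal{F}_2-F}$ (resp.\ $\sigma_{I\leq\mathcal{F}_1}\times\sigma_{\mathcal{F}_2-F}$), and this assignment is inverted by concatenating flags through $F$ and transporting $I$. The only real subtlety, which I would treat carefully, is to verify that the independence/containment condition for $(I,\mathcal{F})$ in $M$ is equivalent to the same condition for $(I,\mathcal{F}_1)$ in $M^F$ (using that $I\subseteq F\subseteq G$ for every $G\in\mathcal{F}_2$); everything else is routine and the claimed isomorphisms follow.
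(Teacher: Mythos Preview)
Your proof is correct. The paper does not supply its own argument for this proposition; it simply cites \cite[Proposition 2.7]{ssd}, so your direct verification by matching ambient lattices and cone generators is exactly the standard unfolding one would find behind that citation.
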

\begin{definition}
    We define
    \[ \underline{\psi}_M^F: \ch(M_F) \otimes \ch(M^F) \rightarrow \ch(M) \]
    and
    \[ \underline{\varphi}_M^F: \ch(M) \rightarrow \ch(M_F) \otimes \ch(M^F) \]
    to be the pushforward and pullback homomorphism between Chow rings induced by the morphism of toric varieties defined by the inclusion of fans
    \[ \mathrm{star}_{\underline{\Sigma}_M}(\underline{\rho}_F) \hookrightarrow \underline{\Sigma}_M. \]
\end{definition}
By the projection formula the composition $\underline{\psi}_M^F \circ \underline{\varphi}_M^F$ is multiplication by $x_F$.
\begin{prop}{\cite[Proposition 2.25]{ssd}} \label{formulapsichow}
    Let $S_1$ be any collection of proper flats of $M$ strictly containing $F$ and $S_2$ any collection of nonempty flats of $M$ strictly contained in $F$. Then the pushforward $\underline{\psi}_M^F$ satisfies
    \[ \underline{\psi}_M^F \left( \underset{F_1 \in S_1}{\prod} x_{F_1\backslash F} \otimes \underset{F_2 \in S_2}{\prod} x_{F_2} \right) = x_F \cdot \underset{F_1 \in S_1}{\prod} x_{F_1} \cdot \underset{F_2 \in S_2}{\prod} x_{F_2}. \]
\end{prop}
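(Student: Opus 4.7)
The plan is to interpret both sides of the claimed identity as classes of torus-invariant cycles in the toric variety $X(\underline{\Sigma}_M)$, using the isomorphism $\ch(M) \simeq \ch(X(\underline{\Sigma}_M))$ and its analogues for $M^F$ and $M_F$. By definition $\underline{\psi}_M^F$ is the pushforward coming from the inclusion of fans $\mathrm{star}_{\underline{\Sigma}_M}(\underline{\rho}_F) \hookrightarrow \underline{\Sigma}_M$, which at the level of toric varieties is the inclusion of the torus-invariant divisor $V(\underline{\rho}_F)$. The standard toric dictionary then says that this pushforward sends the class $[V(\tau)]$ of an orbit closure in the star to the class $[V(\sigma)]$ in $X(\underline{\Sigma}_M)$, where $\sigma$ is the unique cone of $\underline{\Sigma}_M$ containing $\underline{\rho}_F$ whose image in the quotient is $\tau$.

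First I would make the isomorphism $\mathrm{star}_{\underline{\Sigma}_M}(\underline{\rho}_F) \simeq \underline{\Sigma}_{M^F} \times \underline{\Sigma}_{M_F}$ explicit on rays. Quotienting the ambient space $\mathbb{R}^{[m]}/e_M$ of $\underline{\Sigma}_M$ by the line spanned by $\underline{\rho}_F$ gives $\mathbb{R}^{[m]}/\langle e_M, e_F\rangle \simeq (\mathbb{R}^F/e_F) \oplus (\mathbb{R}^{[m]\backslash F}/e_{[m]\backslash F})$, and under this splitting $\underline{\rho}_G$ for a nonempty flat $G \subsetneq F$ lands in the first factor as the ray $\underline{\rho}_G$ of $\underline{\Sigma}_{M^F}$, while $\underline{\rho}_G$ for a proper flat $G \supsetneq F$ lands in the second as $\underline{\rho}_{G\backslash F}$ of $\underline{\Sigma}_{M_F}$ (using $e_G = e_F + e_{G\backslash F}$). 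With this identification in hand, the monomial $\prod_{F_1 \in S_1} x_{F_1\backslash F} \otimes \prod_{F_2 \in S_2} x_{F_2}$ is the class $[V(\tau)]$ of the cone $\tau$ spanned by $\{\underline{\rho}_{F_1\backslash F}\}_{F_1 \in S_1} \cup \{\underline{\rho}_{F_2}\}_{F_2 \in S_2}$ in $\underline{\Sigma}_{M^F} \times \underline{\Sigma}_{M_F}$, and its lift to $\underline{\Sigma}_M$ is precisely $\sigma_{\mathcal{F}}$ for the flag $\mathcal{F} = S_2 \cup \{F\} \cup S_1$. Applying the pushforward therefore produces $[V(\sigma_{\mathcal{F}})] = x_F \cdot \prod_{F_1 \in S_1} x_{F_1} \cdot \prod_{F_2 \in S_2} x_{F_2}$ in $\ch(M)$, as claimed.

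The only edge case to address is when $S_1 \cup \{F\} \cup S_2$ fails to be a chain of flats: then the proposed $\tau$ is not a cone of the product fan, and both $[V(\tau)]$ on the left and the corresponding monomial on the right vanish by the incomparability relations in $\ch(M_F)\otimes\ch(M^F)$ and in $\ch(M)$ respectively, so the identity holds trivially. The main obstacle, and the step to execute with care, is the explicit ray-by-ray verification of the isomorphism between the star and the product of Bergman fans through the quotient map by $\underline{\rho}_F$; once that is pinned down, the proposition reduces to the standard toric-geometric fact that pushforwards along closed embeddings coming from stars send orbit-closure classes to orbit-closure classes.
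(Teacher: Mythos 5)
Note first that the paper quotes this result from \cite[Proposition 2.25]{ssd} without reproducing a proof, so there is no in-paper argument to compare against. Your geometric setup is correct as far as it goes: the splitting of $\mathbb{R}^{[m]}/\langle e_M,e_F\rangle$ that identifies the rays of $\mathrm{star}_{\underline{\Sigma}_M}(\underline{\rho}_F)$ with those of $\underline{\Sigma}_{M^F}\times\underline{\Sigma}_{M_F}$ is accurate, the lift of $\tau$ is $\sigma_{\mathcal{F}}$ for $\mathcal{F}=S_2\cup\{F\}\cup S_1$, and the pushforward along a star inclusion does carry orbit-closure classes to orbit-closure classes. However, you tacitly assume that $S_1$ and $S_2$ are sets, so that $\prod_{F_1\in S_1}x_{F_1\backslash F}\otimes\prod_{F_2\in S_2}x_{F_2}$ is a squarefree chain monomial and hence equal to a single class $[V(\tau)]$. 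In this paper ``collection'' means multiset: this is made explicit in the proof of Lemma \ref{zach}, where the proposition is invoked for multisets $S_1,S_2$ precisely in order to deduce the identity $\underline{\psi}^{F\cup G}_{M\oplus N}(c)\cdot\underline{\psi}^{F\cup G}_{M\oplus N}(d)=\underline{\psi}^{F\cup G}_{M\oplus N}(cd)\cdot\underline{\psi}^{F\cup G}_{M\oplus N}(1)$, whose verification requires the formula for products with repeated flats. Once a flat appears with multiplicity, say $S_1=\{G,G\}$, the element $x_{G\backslash F}^2\otimes 1$ is not an orbit-closure class, your dictionary is silent, and this is not a case where both sides vanish, so the edge-case argument you give does not cover it.

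The clean way to close the gap is the projection formula, which the paper itself records: $\underline{\psi}^F_M\circ\underline{\varphi}^F_M$ is multiplication by $x_F$. Since $\underline{\varphi}^F_M$ is a ring homomorphism sending $x_{F_1}\mapsto x_{F_1\backslash F}\otimes 1$ for $F\subsetneq F_1\subsetneq M$ and $x_{F_2}\mapsto 1\otimes x_{F_2}$ for $\emptyset\subsetneq F_2\subsetneq F$ (the single-variable pullback computation that your ray identification already supplies, since both rays lie in the link of $\underline{\rho}_F$), applying $\underline{\psi}^F_M\circ\underline{\varphi}^F_M$ to $b=\prod_{F_1\in S_1}x_{F_1}\cdot\prod_{F_2\in S_2}x_{F_2}$ for arbitrary multisets $S_1,S_2$ immediately produces the stated formula. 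Your cone-by-cone argument is a correct verification of the squarefree case, but it should be routed through the projection formula to obtain the proposition as stated and as it is used downstream.
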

\begin{definition}
    We define
    \[ \psi_M^F: \ch(M_F) \otimes \ach(M^F) \rightarrow \ach(M) \]
    and
    \[ \underline{\varphi}_M^F: \ach(M) \rightarrow \ch(M_F) \otimes \ach(M^F) \]
    to be the pushforward and pullback homomorphisms between Chow rings induced by the morphism of toric varieties defined by the inclusion of fans
    \[ \mathrm{star}_{\Sigma_M}(\rho_F) \hookrightarrow \Sigma_M. \]
\end{definition}
By the projection formula the composition $\psi_M^F \circ \varphi_M^F$ is multiplication by $x_F$.
\begin{prop}{\cite[Proposition 2.21]{ssd}}
    Let $S_1$ be any collection of proper flats of $M$ strictly containing $F$ and $S_2$ any collection of flats of $M$ strictly contained in $F$. Then the pushforward $\psi_M^F$ satisfies
    \[ \psi_M^F \left( \underset{F_1 \in S_1}{\prod} x_{F_1\backslash F} \otimes \underset{F_2 \in S_2}{\prod} x_{F_2} \right) = x_F \cdot \underset{F_1 \in S_1}{\prod} x_{F_1} \cdot \underset{F_2 \in S_2}{\prod} x_{F_2}. \]
\end{prop}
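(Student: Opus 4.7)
The plan is to interpret the identity directly via toric geometry. The pushforward $\psi_M^F$ is induced by the inclusion of fans $\mathrm{star}_{\Sigma_M}(\rho_F) \hookrightarrow \Sigma_M$, which realises geometrically the closed embedding of the torus-invariant prime divisor $V(\rho_F) \hookrightarrow X(\Sigma_M)$. Under the isomorphism $\mathrm{star}_{\Sigma_M}(\rho_F) \simeq \underline{\Sigma}_{M_F} \times \Sigma_{M^F}$, the rays of $\underline{\Sigma}_{M_F}$ are indexed by flats $F\subsetneq F_1 \subsetneq M$ and correspond to the generators $x_{F_1\backslash F}$ of $\ch(M_F)$, while the rays of $\Sigma_{M^F}$ indexed by flats $F_2\subsetneq F$ correspond to the generators $x_{F_2}$ of $\ach(M^F)$; the independent-set rays $e_i$ for $i\in F$ do not enter the formula.

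First I would translate the input monomial into the fundamental class of a torus-invariant subvariety. Both $\Sigma_M$ and its star (a product of Bergman fans) are unimodular, so for any cone $\sigma$ in such a fan the class of $V(\sigma)$ in the Chow ring of the associated toric variety equals $\prod_\rho x_\rho$ over the rays of $\sigma$. Hence, if $\tau$ denotes the cone of the star spanned by the rays corresponding to $F_1\in S_1$ and $F_2\in S_2$, the input monomial $\prod_{F_1\in S_1} x_{F_1\backslash F}\otimes \prod_{F_2\in S_2}x_{F_2}$ is precisely the class $[V(\tau)]$ in $X(\mathrm{star}_{\Sigma_M}(\rho_F))$.

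Next I would invoke the standard toric fact that for the inclusion of the star fan of a ray $\rho$ into the ambient fan, the induced pushforward on Chow rings sends $[V(\bar\tau)]$ in the star's toric variety to $[V(\tau+\rho)]$ in the ambient one. Applied here, $\psi_M^F$ sends the above class to $[V(\tau+\rho_F)]$ in $X(\Sigma_M)$, which by unimodularity of $\Sigma_M$ equals
\[ x_F \cdot \prod_{F_1\in S_1}x_{F_1}\cdot \prod_{F_2\in S_2}x_{F_2} \]
in $\ach(M)$. This yields the claimed identity whenever $\{F\}\cup S_1\cup S_2$ forms a chain of flats, so that $\tau+\rho_F$ is an honest cone of $\Sigma_M$.

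When the union fails to form a chain, both sides of the identity must vanish: if two flats in $S_1$ are incomparable in $M$, their images are incomparable flats of $M_F$ and kill the left-hand side via the incomparability relations in $\ch(M_F)$, while the right-hand side likewise vanishes in $\ach(M)$; incomparability within $S_2$, or between $S_1$ and $S_2$, is handled analogously in $\ach(M^F)$ and $\ach(M)$. I expect no serious obstacle beyond careful bookkeeping of the ray-flat dictionary through the star isomorphism, since the argument is the toric counterpart of the classical (non-augmented) case of Proposition \ref{formulapsichow}.
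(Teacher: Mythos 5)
This proposition is cited from \cite[Proposition 2.21]{ssd} and the present paper supplies no proof, so the comparison can only be against the source's argument and against self-consistency. Your toric interpretation is sound in its core case: the augmented Bergman fan $\Sigma_M$ and its star $\underline{\Sigma}_{M_F}\times\Sigma_{M^F}$ are both unimodular, a squarefree monomial in a unimodular fan is the class of the corresponding orbit closure, and for the star inclusion of a ray the pushforward sends $[V(\bar\tau)]$ to $[V(\tau)]$ with $\tau\supseteq\rho_F$. You also correctly dispose of the non-chain case via incomparability relations on both sides.

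The genuine gap is the case of \emph{repeated} flats. The statement (and, more tellingly, its use in this paper — see the proof of Lemma \ref{lemortogonalnosc} and especially Lemma \ref{zach}, which explicitly invokes the companion formula for ``multisets'') must hold when $S_1$ or $S_2$ contains a flat with multiplicity. Then the input monomial is not squarefree, is not the class $[V(\tau)]$ of any cone, and your fundamental-class translation does not apply; the argument simply does not see these monomials. The standard repair, which is also the slicker route and almost certainly what \cite{ssd} does, is the projection formula: since $\psi_M^F\circ\varphi_M^F$ is multiplication by $x_F$, $\psi_M^F$ is a module map over $\ach(M)$ via $\varphi_M^F$, and $\varphi_M^F$ is a ring homomorphism sending $x_{F_1}\mapsto x_{F_1\backslash F}\otimes 1$ for $F_1\supsetneq F$ and $x_{F_2}\mapsto 1\otimes x_{F_2}$ for $F_2\subsetneq F$. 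Hence
\[
\psi_M^F\Bigl(\prod_{F_1\in S_1} x_{F_1\backslash F}\otimes \prod_{F_2\in S_2} x_{F_2}\Bigr)
=\psi_M^F\Bigl(\varphi_M^F\Bigl(\prod_{S_1} x_{F_1}\prod_{S_2} x_{F_2}\Bigr)\cdot(1\otimes 1)\Bigr)
=\prod_{S_1} x_{F_1}\prod_{S_2} x_{F_2}\cdot\psi_M^F(1\otimes 1)
= x_F\cdot\prod_{S_1} x_{F_1}\prod_{S_2} x_{F_2},
\]
which handles multiplicities and the incomparable case in one stroke. Your geometric picture is a good sanity check for the squarefree case, but you should either restrict the claim to chains (which would then be insufficient for the paper's later use) or add the projection-formula reduction to cover arbitrary collections.
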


\subsection{Chow functions of posets}\label{subsecposet}

In this section we recall the definitions of Chow polynomials of partially ordered sets introduced in \cite{CF25}. Their properties play a key role in our proof of the decompositions Theorem \ref{thm1}, Theorem \ref{thm2}, and Theorem \ref{thmaug}. They are part of the so called KLS-theory, which provides a framework for understanding several invariants, called the Kazhdan-Lusztig-Stanley polynomials (KLS for short), attached to combinatorial objects. Those polynomials often have interesting geometric interpretations, usually in terms of point counts of varieties over finite fields and intersection cohomology. They were first introduced in \cite{KLbruhat} for Bruhat intervals, were they agree with dimensions of the graded pieces of intersection cohomology of Schubert varieties. Later, they were generalized by Stanley in \cite{Stanleykernel} for locally graded posets. For a good exposition on the area, see \cite{KLSProudfoot}.\\
We start by recalling some crucial definitions. For $s,t \in \mathcal{P}$ such that $s\leq t$ we define the interval $[s,t]$ to be
\[[s,t]:= \{ u\in \mathcal{P} \ |\ s \leq u \leq t \}. \]
We denote the set of intervals in $\mathcal{P}$ by
\[ \mathrm{Int}(\mathcal{P}) := \{ [s,t]\ |\ s\leq t \} \]
\begin{definition}\label{defincidence}
    We define the incidence algebra $\mathcal{J}(\mathcal{P})$ of $\mathcal{P}$ as the subalgebra of $\mathrm{Mat_{\mathcal{P}\times\mathcal{P}}}(\mathbb{Z}[x])$ defined by the set of matrices whose only nonzero entries correspond to pairs $(s,t) \in \mathcal{P}\times\mathcal{P}$ such that $s\leq t$.
\end{definition}
Equivalently, $\mathcal{J}(\mathcal{P})$ is an algebra of functions from $\mathrm{Int}(\mathcal{P})$ to $\mathbb{Z}[x]$ with product given by convolution
\[ (a \cdot b)_{s,t} = \underset{s\leq u \leq t}{\sum} a_{s,u} \cdot b_{u,t}. \]
The above is a special case of a more general construction, were we consider functions from $\mathrm{Int}(\mathcal{P})$ to an arbitrary ring $R$, in out case $R = \mathbb{Z}[x]$, in the literature it is often assumed to be a field, see \cite{incidenceoriginal}. We define the zeta function $\zeta$ of the poset to be the element of $\mathcal{J}(\mathcal{P})$ satisfying $\zeta_{s,t} = 1$ for all $s,t$. The theory of Chow polynomials was developed in \cite{CF25} for a partially ordered set $\mathcal{P}$ carrying a so called \textit{weak rank function}.
\begin{definition}{\cite[Section 2]{defweakrank}}\label{defweak}
    We call a poset $\mathcal{P}$ with a function $\mathrm{rk}: \mathrm{Int}(\mathcal{P}) \rightarrow \mathbb{Z}$ a weakly ranked poset if it satisfies the conditions
    \begin{itemize}
        \item If $s< t$, then $\mathrm{rk}(s,t) >0$.
        \item If $s\leq u \leq t$, then $\mathrm{rk}(s,t) = \mathrm{rk}(s,u) + \mathrm{rk}(u,t)$.
    \end{itemize}
\end{definition}
In particular for all $s\in \mathcal{P}$ we have $\mathrm{rk}(s,s)=0$. In the rest of the article we assume all posets to be weakly ranked. Given a weakly ranked poset $\mathcal{P}$ we can consider a subalgebra $\mathcal{J}_{\mathrm{rk}}(\mathcal{P}) \subset \mathcal{J}(\mathcal{P})$ defined by
\[ \mathcal{J}_{\mathrm{rk}}(\mathcal{P}) := \{ a\in \mathcal{J}(\mathcal{P}) \ |\ \mathrm{deg}(a_{s,t}) \leq \mathrm{rk}(s,t) \}. \]
\begin{definition}
    We define an operation
    \[ (-)^{\mathrm{rev}}:  \mathcal{J}_{\mathrm{rk}}(\mathcal{P}) \rightarrow \mathcal{J}_{\mathrm{rk}}(\mathcal{P})\]
    such that the entry  $(s,t)$ of $(a)^{\mathrm{rev}}$ is equal to $x^{_{\mathrm{rk}(s,t)}} \cdot (a)_{s,t}(x^{-1})$.
\end{definition}
\begin{definition}
    We say that $\kappa \in \mathcal{J}_{\mathrm{rk}}(\mathcal{P})$ is a kernel if it satisfies the equation
    \[ \kappa^{-1} = \kappa^{rev}. \]
\end{definition}
It is not difficult to observe that the definition of the kernel implies that $x-1 \mid (\kappa)_{s,t}$ for $s<t$ and $(\kappa)_{s,s} = 1$. This leads to the following definition.
\begin{definition}
    We define the reduced kernel $\overline{\kappa}$ as the element of $\mathcal{J}_{\mathrm{rk}}(\mathcal{P})$ given by:
    \[ (\overline{\kappa})_{s,t} := \begin{cases}
        \frac{\kappa_{s,t}}{x-1}\  \text{ if s<t} \\ -1 \ \ \ \text{ otherwise}
    \end{cases} \]
\end{definition}
The most important example of a kernel function is the characteristic function of a poset.
\begin{definition}
    We define the characteristic function $\chi$ of a poset to be the product
    \[ \chi := \zeta^{-1} \cdot \zeta^{\mathrm{rev}} \]
    in the incidence algebra of $\mathcal{P}$.
\end{definition}
The characteristic function can be described as $\chi_{s,t} = \underset{s \leq u \leq t}{\sum} \mu_{s,u}x^{\mathrm{rk}(u,t)}$, were $\mu$ is the Möbius function of a poset defined recursively by
\[ \mu_{s,t} := \begin{cases}
    1 \  \text{ if }s=t \\ -\underset{s\leq u<t}{\sum} \mu_{s,u} \text{ if }s \neq t.
\end{cases} \]
\begin{definition}{\cite[Theorem 2.2]{KLSProudfoot}, \cite[Theorem 6.5]{Stanleykernel}}
    We define the right $KLS$ function $f$ of $\kappa$ to be the unique element of $\mathcal{J}_{\mathrm{rk}}(\mathcal{P})$ satisfying
    \begin{itemize}
        \item $f_{s,s} = 1$ for all $s\in \mathcal{P}$,
        \item $\mathrm{deg}(f_{s,t}) \leq \frac{\mathrm{rk}(s,t)}{2}$ for all $s<t$,
        \item $f^{\mathrm{rev}} = \kappa \cdot f$.
    \end{itemize}
    Similarly the left $KLS$ function $g$ of $\kappa$ is the unique element of $\mathcal{J}_{\mathrm{rk}}(\mathcal{P})$ satisfying
    \begin{itemize}
        \item $g_{s,s} = 1$ for all $s\in \mathcal{P}$,
        \item $\mathrm{deg}(g_{s,t}) \leq \frac{\mathrm{rk}(s,t)}{2}$ for all $s<t$,
        \item $g^{\mathrm{rev}} = g \cdot \kappa$.
    \end{itemize}
\end{definition}
For $\mathcal{P}$ the lattice of flats of a matroid this recovers the $KLS$ polynomials of matroids defined in \cite{KLpolyofmatroid}. We are now ready to define our main objects of study in Section \ref{sectionposet} - the (augmented) Chow functions of posets associated to a kernel function.
\begin{definition}{\cite[Definition 3.3 and Definition 3.13]{CF25}}\label{defchow}
    \begin{enumerate}
        \item We define the Chow function of a weakly ranked poset $\mathcal{P}$ associated to $\kappa$ to be an element 
        \[ H = -(\overline{\kappa})^{-1} \]
        of $\mathcal{J}_{\mathrm{rk}}(\mathcal{P})$.
        \item We define the right augmented Chow function of $\mathcal{P}$ associated to $\kappa$ to be
        \[ F = H \cdot f^{rev}. \]
        \item We define the left augmented Chow function of $\mathcal{P}$ associated to $\kappa$ to be
        \[ G = g^{rev} \cdot H. \]
    \end{enumerate}
\end{definition}

If the poset $\mathcal{P}$ has a minimal element $\hat{0}$ and a maximal element $\hat{1}$ - as in the case for the lattice of flats of a matroid - then we call the polynomial $H_{\hat{0},\hat{1}}$ the $\kappa$-Chow polynomial of $\mathcal{P}$. Similarly, we call $F_{\hat{0},\hat{1}}$ (respectively $G_{\hat{0},\hat{1}}$) the right (respectively left) $\kappa$-augmented Chow polynomial of $\mathcal{P}$. The relevance of the theory above to our considerations of Chow rings of matroids lies in the following theorem.

\begin{prop}{\cite[Theorem 4.9 and Theorem 4.11]{CF25}}\label{HGtochow}
    Let $M$ be a loopless matroid and $\mathcal{P}$ the poset of flats of $M$. Then:
    \begin{enumerate}
        \item The $\chi$-Chow polynomial $H_{\emptyset,M}$ of $\mathcal{P}$ is the Hilbert function of the Chow ring of $M$.
        \item The left $\chi$-Chow polynomial $G_{\emptyset,M}$ of $\mathcal{P}$ is the Hilbert function of the augmented Chow ring of $M$.
    \end{enumerate}
\end{prop}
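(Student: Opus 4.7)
The plan is to prove both identities by induction on the rank of $M$, showing that both sides satisfy the same recursion obtained by expanding the defining equation of the Chow function. I will treat part (1) in detail and then adapt the argument for part (2) via the relation $G = g^{\mathrm{rev}}\cdot H$.

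\textbf{Recursion for the Chow polynomial.} Starting from $H = -(\overline{\chi})^{-1}$, the equation $\overline{\chi}\cdot H = -I$ expanded at the entry $(\emptyset, M)$ gives
\[
-H_{\emptyset,M}(x) + \sum_{\emptyset \neq F \leq M} \overline{\chi}_{\emptyset,F}(x)\, H_{F,M}(x) = 0.
\]
The interval $[\emptyset,F]$ in the lattice of flats is the lattice of flats of $M^F$, and $[F,M]$ is that of $M_F$, so this reads
\[
H_{\emptyset,M}(x) \;=\; \sum_{\emptyset \neq F \leq M} \overline{\chi}_{M^F}(x)\cdot H_{\emptyset,M_F}(x),
\]
where $\overline{\chi}_{M^F}$ is the reduced characteristic polynomial of the restriction and $H_{\emptyset,M_F}$ is the Chow polynomial of the contraction. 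For the base case $\mathrm{rk}(M)=0$ both sides equal $1$.

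\textbf{Matching recursion for the Hilbert series.} The key input is to prove
\[
\mathrm{Hilb}(\ch(M), x) \;=\; \sum_{\emptyset \neq F \leq M} \overline{\chi}_{M^F}(x)\cdot \mathrm{Hilb}(\ch(M_F), x).
\]
I would establish this via the toric geometry of the Bergman fan $\underline{\Sigma}_M$: the torus-orbit stratification of $X(\underline{\Sigma}_M)$, indexed by cones $\sigma_{\mathcal{F}}$, can be grouped by the minimal flat $F$ appearing in $\mathcal{F}$. The star of $\underline{\rho}_F$ is $\underline{\Sigma}_{M^F}\times\underline{\Sigma}_{M_F}$, so the local contribution factors as the Hilbert series of $\ch(M_F)$ times a polynomial depending only on $M^F$, and a direct combinatorial computation (e.g., using the nested-set basis of Feichtner--Yuzvinsky or the Gröbner-theoretic monomial basis of \cite{AHKcomb}) identifies that polynomial as $\overline{\chi}_{M^F}(x)$. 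Together with Step 1 and induction on $\mathrm{rk}(M)$, this proves part (1).

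\textbf{The augmented case.} For part (2), expand
\[
G_{\emptyset,M}(x) = (g^{\mathrm{rev}}\cdot H)_{\emptyset,M} = \sum_{F\leq M} g^{\mathrm{rev}}_{M^F}(x)\cdot H_{\emptyset, M_F}(x).
\]
I would match this to $\mathrm{Hilb}(\ach(M), x)$ by producing a vector-space decomposition of $\ach(M)$ whose piece indexed by $F$ has Poincaré series $g^{\mathrm{rev}}_{M^F}(x)\cdot\mathrm{Hilb}(\ch(M_F),x)$. The natural candidate comes from the augmented Bergman fan $\Sigma_M$: its stratification by cones $\sigma_{I\leq\mathcal{F}}$ (independent set $I$ plus flag $\mathcal{F}$) can be grouped by the minimal flat $F$ containing all relevant data, with the local piece coming from $\mathrm{star}_{\Sigma_M}(\rho_F)\simeq\underline{\Sigma}_{M_F}\times\Sigma_{M^F}$. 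The contribution from $\Sigma_{M^F}$ is the Hilbert series of $\ach(M^F)$, which I would then identify with $g^{\mathrm{rev}}_{M^F}(x)$ by comparing with the defining equation of $g$ or, when the inductive hypothesis on part (2) of the proposition applies to $\ach(M^F)$, unwinding the recursion.

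\textbf{Main obstacle.} The substantive content lies in the geometric/combinatorial identification in Step 2 (and its augmented analogue): one must show that after grouping basis elements of $\ch(M)$ or $\ach(M)$ by the minimal flat that ``uses'' them, the total contribution of flats other than $F$ factors cleanly as $\overline{\chi}_{M^F}(x)$ or $g^{\mathrm{rev}}_{M^F}(x)$ times the Hilbert series of the corresponding contraction. This is where the particular algebraic structure of the defining ideals $I+J$ enters and is the crux of the proof; the rest is a formal induction once this identification is in place.
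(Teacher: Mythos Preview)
The paper does not prove this proposition: it is stated as a citation from \cite{CF25} (their Theorems 4.9 and 4.11) and used as a black box. There is therefore no ``paper's own proof'' to compare your attempt against.

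That said, a few remarks on your sketch. For part (1), the recursion you extract from $\overline{\chi}\cdot H=-I$ is correct, and matching it against a Feichtner--Yuzvinsky style basis for $\ch(M)$ is indeed the standard route; your ``main obstacle'' paragraph accurately identifies where the work lies, and this part of the plan is sound even if not carried out.

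For part (2) there is a genuine gap. You propose to group cones of $\Sigma_M$ by a ``minimal flat $F$'' and then use $\mathrm{star}_{\Sigma_M}(\rho_F)\simeq \underline{\Sigma}_{M_F}\times\Sigma_{M^F}$ to read off a factor of $\mathrm{Hilb}(\ach(M^F))$. But the quantity you need in the expansion $G_{\emptyset,M}=\sum_F g^{\mathrm{rev}}_{\emptyset,F}\,H_{F,M}$ is $g^{\mathrm{rev}}_{\emptyset,F}$, the reversed left KL polynomial of $M^F$, not $G_{\emptyset,F}=\mathrm{Hilb}(\ach(M^F))$; these are different polynomials in general. Moreover, summing the full stars over all $F$ would overcount cones, so the decomposition you describe does not directly yield the claimed factorization. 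A correct argument along these lines requires a sharper filtration (for instance, one that isolates exactly the contribution $g^{\mathrm{rev}}_{\emptyset,F}$ via the $Z$-polynomial/KL machinery, as in \cite{CF25}), and this is not a detail one can wave past.
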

It is straightforward to see that the above implies that for flats $F \subset G$ of $M$ the polynomial $H_{F,G}$ (respectively $G_{F,G}$) coincides with the Hilbert function of the Chow ring (respectively the augmented Chow ring) of $M_F^G$.
\subsection{Notation}

We always assume that $M$ and $N$ are loopless matroids with ground sets $[m]$ and $[n]$ respectively. We use the letters $F,G$ to denote flats in $M,N$ respectively. We assume that all the tensor products considered are over $\mathbb{Q}$ in the case of Chow rings and over $\mathbb{Z}[x]$ when applied to the elements of the incidence algebra (see Definition \ref{defincidence}).\\
We use $\mathcal{P}$ to denote a poset and assume all posets to be weakly ranked, see Definition \ref{defweak}. We often refer to the invariants of the pair $(\mathcal{P},\kappa)$ of a weakly ranked poset and its kernel function, and sometimes omit $\kappa$ when it is implicitly understood.\\
Let $R$ be a ring with a non-degenerate bilinear form given by the Poincaré pairing, in our case $R = \ch(M)$ or $R = \ach(M)$ for a matroid $M$. We will denote the Poincaré pairing on $R$ by
\[ (-,-)_R : R \times R \rightarrow \mathbb{Q}, \]
indicating the ring in the subscript to avoid confusion. Note that for two rings $R_1,R_2$ with non-degenerate bilinear forms the tensor product $R_1 \otimes R_2$ carries an induced non-degenarate bilinear form
\[ (-,-)_{R_1} \otimes  (-,-)_{R_2}. \]
We denote such a bilinear form on the tensor product by
\[ (-,-)_{R_1\otimes R_2}: R_1\otimes R_2 \times R_1\otimes R_2 \rightarrow \mathbb{Q}, \]
again indicating the ring carrying the form in the subscript.

\section{Construction of the maps and proof of injectivity} \label{sectioninj}

In this section we define the morphisms from the direct sums in Theorem \ref{thm1}, Theorem \ref{thm2}, and Theorem \ref{thmaug} into $\ch(M \oplus N)$ and $\ach(M\oplus N)$ respectively and prove their injectivity. We give a detailed proof of the statements needed for Theorem \ref{thm1}, while for Theorem \ref{thm2} and Theorem \ref{thmaug} we restrict ourselves to defining the maps, as the steps in the proof are analogous. First, we define the structure of a $\ch(M) \otimes \ch(N)$-module on $\ch(M \oplus N)$.

\begin{definition}
    Given two matroids $M,N$ we define the inclusion of rings
    \[ \iota_{M,N}:  \ch(M) \otimes \ch(N) \hookrightarrow \ch(M \oplus N) \]
    by sending
    \[ x_F \otimes 1 \mapsto \underset{G'\subseteq N}{\sum} x_{F \cup G'} \text{ and } 1\otimes x_G \mapsto \underset{F' \subseteq M}{\sum} x_{F'\cup G}, \]
    where we sum over all (possibly empty) flats in $N$ and $M$, respectively.
\end{definition}

A direct calculation on maximal flags shows that it is injective. For realizable matroids this map can be understood geometrically as the map induced by the morphisms $X(\Sigma_{M\oplus N}) \rightarrow X(\Sigma_M)$ and $X(\Sigma_{M\oplus N}) \rightarrow X(\Sigma_N)$ of smooth varieties. In general, it is the morphism of Chow rings induced by the projections of Bergman fans given by the canonical projections
\[ \pi_{M}: \mathbb{R}^{[m] \oplus [n]}/e_{M\cup N} \twoheadrightarrow \mathbb{R}^{[m]}/e_M  \text{ and } \pi_N: \mathbb{R}^{[m] \oplus [n]}/e_{M\cup N} \twoheadrightarrow \mathbb{R}^{[n]}/e_N \]
onto the ground sets of $M,N$.

\begin{definition}
    For nonempty flats $F,G$ of $M,N$ such that $F \cup G \neq M \cup N$ there is a structure of an $\ch(M)\otimes \ch(N)$-module on $\ch(M_F \oplus N_G) \otimes \ch(M^F) \otimes \ch(N^G)$ defined by the composition of maps of rings
    \[ \underline{\varphi}^{F}_M \otimes \underline{\varphi}^G_M : \ch(M)\otimes \ch(N) \rightarrow \ch(M^F) \otimes \ch(M_F) \otimes \ch(N^F) \otimes \ch(N_F), \]
    the obvious map $\sigma$ permuting the factors of the tensor product, and the map of rings
    \[ \iota_{M_F,N_G} \otimes \mathrm{id} \otimes \mathrm{id}: \ch(M_F) \otimes \ch(N_G) \otimes \ch(M^F) \otimes \ch(N^G) \rightarrow \ch(M_F \oplus N_G) \otimes \ch(M^F) \otimes \ch(N^G).\]
\end{definition}
We now define inclusions $u_{F,G}$ of $\ch(M) \otimes \ch(N)$-modules such that the isomorphism in Theorem \ref{thm1} is given by the direct sum of maps
\[ \iota_{M,N} \oplus \underset{\emptyset \subsetneq F\subseteq M, \emptyset \subsetneq N \subseteq M}{\bigoplus} u_{F,G}. \]

\begin{definition}
    For $F,G$ nonempty flats in $M,N$ respectively, such that $F\cup G \neq M \cup N$ we define the map
    \[ u_{F,G}: \ch(M_F \oplus N_G) \otimes \ch(M^F) \otimes \ch(N^G) [-1] \rightarrow \ch(M\oplus N) \]
    by the composition of
    \[ \mathrm{id}\otimes \iota_{M^F,N^G}:  \ch(M_F \oplus N_G) \otimes \ch(M^F) \otimes \ch(N^G) [-1] \rightarrow  \ch(M_F \oplus N_G) \otimes \ch(M^F \oplus N^G) [-1] \]
    and the pushfoward map
    \[ \underline{\psi}^{F \cup G}_{M\oplus N}:  \ch(M_F \oplus N_G) \otimes \ch(M^F \oplus N^G) [-1] \rightarrow \ch(M\oplus N).\]
\end{definition}
Checking that this is a map of $\ch(M) \otimes \ch(N)$-modules amounts to the identity:
\[ x_{F\cup G} \cdot \iota_{M,N} = \underline{\psi}^{F,G}_{M\oplus N} \circ (\iota_{M_F,N_G} \otimes \iota_{M^F,N^G}) \circ \sigma \circ (\underline{\varphi}^F_M \otimes \underline{\varphi}^G_N), \]
which can be verified on monomials using the incomparability relations in the definition of the Chow ring. The only nontrivial part is verifying the equation on the elements of the form $x_F \otimes 1$ and $1 \otimes x_G$, which we do here.
By \cite[Proposition 2.24]{ssd} we have
\[
(\underline{\varphi}^F_M \otimes \underline{\varphi}^G_N)(x_F \otimes 1) = - 1 \otimes \underset{i\in K \subsetneq F}{\sum} x_K - \underset{j\notin K, F\subsetneq K \subsetneq M}{\sum} x_{K \backslash F} \otimes 1.
\]
Calculating and applying Proposition \ref{formulapsichow}:
\begin{align*}
    \underline{\psi}_{M\oplus N}^{F\cup G}\circ (\iota_{M_F,N_G} \otimes \iota_{M^F,N^G}) \circ \sigma \circ (\underline{\varphi}^F_M \otimes \underline{\varphi}^G_N)(x_F \otimes 1) =\\
    = \underline{\psi}_{M\oplus N}^{F\cup G}\left( - 1 \otimes \underset{i\in K \subsetneq F}{\sum}\underset{L\subseteq N^G}{\sum} x_{K\cup L} - \underset{j\notin K, F\subsetneq K \subsetneq M}{\sum}\underset{L \subsetneq N_G}{\sum} x_{K \cup L \backslash F} \otimes 1 \right) = \\
    = x_{F\cup G} \cdot \left(- \underset{i\in K \subsetneq F}{\sum}\underset{L\subseteq G}{\sum} x_{K\cup L} - \underset{j\notin K, F\subsetneq K \subsetneq M}{\sum}\underset{G\subseteq L \subsetneq N}{\sum} x_{K \cup L}\right).
\end{align*}
This is equal to:
\[ x_{F\cup G} \cdot \left(- \underset{i\in K \subsetneq F}{\sum}\underset{\emptyset \subseteq L\subseteq N}{\sum} x_{K\cup L} - \underset{j\notin K, F\subsetneq K \subsetneq M}{\sum}\underset{\emptyset \subseteq L \subsetneq N}{\sum} x_{K \cup L}\right), \]
by the incomparability relations, since all the other terms vanish when multiplied by $x_{F\cup G}$. Since elements containing $j$ and not containing $i$ are automatically incomparable to $F\cup G$, the above is equal to:
\[ x_{F\cup G} \cdot \left(-\underset{i\in H \subseteq M \cup N}{\sum} x_H + \underset{j\in H \subseteq M\cup N}{\sum} x_H + \underset{\emptyset \subseteq L \subseteq N}{\sum} x_{F \cup L} \right). \]
Since for any matroid the sum $\underset{i\in H}{\sum}x_H$ is independent of $i$, the first two terms in the brackets cancel out and the above is equal to
\[ x_{F \cup G} \cdot \left( \underset{\emptyset \subseteq L \subseteq N}{\sum} x_{F \cup L} \right)  = x_{F \cup G} \cdot \iota_{M,N}(x_F \otimes 1),\]
which we wanted to prove.
\begin{definition}
    We define the map
    \[ u_{M,N}: \ch(M) \otimes \ch(N) [-1] \rightarrow \ch(M \oplus N) \]
    as the pushforward $\underline{\psi}_{M\oplus N}^{M}$.
\end{definition}
The verification that this is a $\ch(M)\otimes \ch(N)$-module map is again straightforward.\\

Note that the map $\iota_{M,N}$ is injective and, by \cite[Proposition 2.27]{ssd}, $\underline{\psi}_{M\oplus N}^E$ is injective for all proper flats $E \subsetneq M\oplus N$. Thus $u_{F,G}$ is injective for all non-empty flats $F,G$, as it is a composition of injective maps.\\

\begin{lemma}\label{lemortogonalnosc}
    Let $F,G$ be nonempty proper flats in $M,N$ respectively. Then the $\ch(M) \otimes \ch(N)$-submodule $\mathrm{im}(u_{F,G})$ is orthogonal to all the other summands in Theorem \ref{thm1} with respect to the Poincaré pairing on $\ch(M\oplus N)$.
\end{lemma}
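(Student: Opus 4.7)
The plan is to combine the projection formula for the pushforward $\underline{\psi}^{F\cup G}_{M\oplus N}$ with a degree-counting argument on the star. For any $v\in \ch(M\oplus N)$, the projection formula together with the identification $\underline{\mathrm{deg}}_{M\oplus N}\circ \underline{\psi}^{F\cup G}_{M\oplus N}=\underline{\mathrm{deg}}_{M_F\oplus N_G}\otimes \underline{\mathrm{deg}}_{M^F\oplus N^G}$ (which reflects the product decomposition $\mathrm{star}_{\underline{\Sigma}_{M\oplus N}}(\underline{\rho}_{F\cup G})\simeq \underline{\Sigma}_{M_F\oplus N_G}\times \underline{\Sigma}_{M^F\oplus N^G}$) yields
\[
(u_{F,G}(\alpha),v)_{\ch(M\oplus N)}=\bigl(\underline{\mathrm{deg}}_{M_F\oplus N_G}\otimes \underline{\mathrm{deg}}_{M^F\oplus N^G}\bigr)\bigl((\mathrm{id}\otimes \iota_{M^F,N^G})(\alpha)\cdot \underline{\varphi}^{F\cup G}_{M\oplus N}(v)\bigr).
\]
The key observation is that the injection $\iota_{M^F,N^G}\colon\ch(M^F)\otimes \ch(N^G)\hookrightarrow \ch(M^F\oplus N^G)$ has image of top degree $d_{M^F}+d_{N^G}-2$, one below the top degree $d_{M^F}+d_{N^G}-1$ of its codomain; hence $\underline{\mathrm{deg}}_{M^F\oplus N^G}$ vanishes on $\mathrm{im}(\iota_{M^F,N^G})$. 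Since the second tensor factor of $(\mathrm{id}\otimes \iota_{M^F,N^G})(\alpha)$ already lies there and the image of $\iota_{M^F,N^G}$ is a subring, it suffices in every case to show that the second tensor factor of $\underline{\varphi}^{F\cup G}(v)$ also lies in $\mathrm{im}(\iota_{M^F,N^G})$.

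For $v\in \iota_{M,N}(\ch(M)\otimes \ch(N))$ I would verify the identity
\[
\underline{\varphi}^{F\cup G}_{M\oplus N}\circ \iota_{M,N}=(\iota_{M_F,N_G}\otimes \iota_{M^F,N^G})\circ \sigma\circ (\underline{\varphi}^F_M\otimes \underline{\varphi}^G_N),
\]
where $\sigma$ permutes the middle two tensor factors. This is a direct check on the generators $x_K\otimes 1$ and $1\otimes x_L$ via the pullback formula \cite[Proposition 2.24]{ssd} together with the incomparability relations, in the same spirit as the module-map verification just above the lemma. Granting the identity, both tensor factors of $\underline{\varphi}^{F\cup G}(\iota_{M,N}(p\otimes q))$ lie in the image of $\iota$, and the preceding paragraph closes this orthogonality.

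For $v=u_{F',G'}(\gamma)$ with $(F',G')\neq (F,G)$, set $E=F\cup G$ and $E'=F'\cup G'$. If $E$ and $E'$ are incomparable in $M\oplus N$, then $\underline{\varphi}^E(x_{E'})=0$, and since $\mathrm{im}(u_{F',G'})\subseteq (x_{E'})$ we conclude $\underline{\varphi}^E(v)=0$. The exceptional case $(F',G')=(M,N)$, in which $u_{M,N}=\underline{\psi}^M_{M\oplus N}$ and $E'$ fails to be a proper flat, reduces to the same argument applied to the flat $M$ of $M\oplus N$, which is incomparable with $E$ because $F\subsetneq M$ and $G\neq \emptyset$. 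If instead $E$ and $E'$ are comparable, symmetry of the Poincar\'e pairing lets me assume $E\subsetneq E'$, so $F\subseteq F'$ and $G\subseteq G'$. I would then invoke the toric base-change identity
\[
\underline{\varphi}^{E'}_{M\oplus N}\circ \underline{\psi}^{E}_{M\oplus N}=(\mathrm{id}\otimes \underline{\psi}^{E}_{M^{F'}\oplus N^{G'}})\circ (\underline{\varphi}^{E'\setminus E}_{M_F\oplus N_G}\otimes \mathrm{id}),
\]
factoring the composition through the star at the flag $\{E,E'\}$ (whose Chow ring is $\ch(M_{F'}\oplus N_{G'})\otimes \ch(M_F^{F'}\oplus N_G^{G'})\otimes \ch(M^F\oplus N^G)$), and follow it by a second projection formula inside $\ch(M^{F'}\oplus N^{G'})$ with respect to $\underline{\psi}^E_{M^{F'}\oplus N^{G'}}$. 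The identity from the previous paragraph, applied inside the pair $(M^{F'},N^{G'})$ to the proper flat $E$ of $M^{F'}\oplus N^{G'}$, then shows that the relevant second tensor factor of the resulting expression again lies in $\mathrm{im}(\iota_{M^F,N^G})$, so the degree argument closes the proof.

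The main obstacle will be the verification of the two commutation identities, namely $\underline{\varphi}^{F\cup G}\circ \iota_{M,N}=(\iota\otimes \iota)\circ \sigma\circ (\underline{\varphi}\otimes \underline{\varphi})$ and the base-change formula for $\underline{\varphi}^{E'}\circ \underline{\psi}^E$ at comparable flats $E\subsetneq E'$. Both reduce to routine but tedious monomial computations using \cite[Propositions 2.24, 2.25]{ssd} and the incomparability relations in the Chow ring; alternatively, the base-change formula follows from the transverse intersection of the toric divisors $D_E$ and $D_{E'}$ in $X(\underline{\Sigma}_{M\oplus N})$.
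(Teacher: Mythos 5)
Your argument is correct in outline, but it takes a genuinely different and considerably heavier route than the paper's proof. The paper handles all three cases by direct degree-counting in $\ch(M\oplus N)$ itself: for orthogonality to $\ch(M)\otimes\ch(N)$ it observes that $\mathrm{im}(u_{F,G})\cdot\mathrm{im}(\iota_{M,N})\subseteq\mathrm{im}(u_{F,G})$ because $\mathrm{im}(u_{F,G})$ is a $\ch(M)\otimes\ch(N)$-submodule, and that $\mathrm{im}(u_{F,G})$ lives in degree at most $\mathrm{rk}(M\oplus N)-2$ (one below top), so the Poincar\'e pairing vanishes; for incomparable $F\cup G$ and $F'\cup G'$ (including the pair $(F',G')=(M,N)$) it uses that every element of $\mathrm{im}(u_{F,G})$ is a multiple of $x_{F\cup G}$, so products involve $x_{F\cup G}x_{F'\cup G'}=0$; and for $F\cup G\subsetneq F'\cup G'$ it shows the product of the two images again lies in $\mathrm{im}(u_{F,G})$, hence vanishes in top degree. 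No pullbacks and no base-change are needed.

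Your proof replaces this with the projection formula $\underline{\mathrm{deg}}_{M\oplus N}\circ\underline{\psi}^{F\cup G}_{M\oplus N}=\underline{\mathrm{deg}}_{M_F\oplus N_G}\otimes\underline{\mathrm{deg}}_{M^F\oplus N^G}$ and the observation that $\iota_{M^F,N^G}$ misses the top degree of $\ch(M^F\oplus N^G)$. This works, and the ``key observation'' is the exact analogue (inside the star) of the paper's degree bound on $\mathrm{im}(u_{F,G})$. But to close the comparable case you need two auxiliary identities: the compatibility $\underline{\varphi}^{F\cup G}_{M\oplus N}\circ\iota_{M,N}=(\iota_{M_F,N_G}\otimes\iota_{M^F,N^G})\circ\sigma\circ(\underline{\varphi}^F_M\otimes\underline{\varphi}^G_N)$ (which at least follows from the module-map verification already carried out above the lemma together with injectivity of $\underline{\psi}^{F\cup G}$), and the toric base-change identity for $\underline{\varphi}^{E'}\circ\underline{\psi}^{E}$ through the star of the two-dimensional cone $\sigma_{\{E,E'\}}$. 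The latter is plausible and follows from transversality of the two toric divisors, but it is not proved in the paper or in the cited references in the form you need, so it would have to be established separately. Also note a small slip: in the last paragraph of the comparable case the factor you land in is $\mathrm{im}(\iota_{M^{F'},N^{G'}})$, not $\mathrm{im}(\iota_{M^F,N^G})$, though the argument survives once you route through the second projection formula as you describe. Net: your approach is conceptually clean (it localizes the computation to the star) but buys you more verification work than the paper's one-line degree bound on $\mathrm{im}(u_{F,G})$.
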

\begin{proof}
    The orthogonality to the factor $\ch(M)\otimes \ch(N)$ follows from the fact $\mathrm{im}(u_{F,G})$ is a $\ch(M) \otimes \ch(N)$-submodule of $\ch(M\oplus N)$ with the multiplication induced by the map $\iota_{M,N}$. In particular, the product of any element of $\mathrm{im}(u_{F,G})$ and $\mathrm{im}(\iota_{M,N})$ lies in $\mathrm{im}(u_{F,G})$, which is zero in the top degree of $\ch(M\oplus N)$.\\
    By definition of $u_{F,G}$, each element of $\mathrm{im}(u_{F,G})$ is a multiple of $x_{F\cup G}$. Thus $\mathrm{im}(u_{F,G})$ is orthogonal to $\mathrm{im}(u_{F',G'})$ for any nonempty flats $F',G'$ such that $F' \cup G'$ is incomparable to $F \cup G$, as well as to $\mathrm{im}(u_{M,N})$.\\
    Thus we are left with proving the orthogonality of $\mathrm{im}(u_{F,G})$ and $\mathrm{im}(u_{F',G'})$ for $F\cup G \subsetneq F'\cup G'$. We show that the product of any two elements of those submodules lies in $\mathrm{im}(u_{F,G})$. Observe that $\mathrm{im}(u_{F,G})$ is the $\ch(M)\otimes\ch(N)$-submodule of $\ch(M\oplus N)$ generated by
    \[ u_{F,G}(\ch(M_F\oplus N_G)\otimes \ch(M^F) \otimes \ch(N^G)). \]
    Thus it is the $\ch(M)\otimes\ch(N)$-submodule spanned by the products
    \[ x_{F\cup G}\cdot x_{H_1}\cdot\ldots\cdot x_{H_k} \]
    for flats $H_1,\ldots,H_k$ strictly containing $F \cup G$. Thus the product $\mathrm{im}(u_{F,G})\cdot \mathrm{im}(u_{F',G'})$ is the submodule spanned by
    \[ x_{F\cup G}x_{F'\cup G'}\cdot x_{H_1}\cdot\ldots\cdot x_{H_k}\]
    for flats $H_1,\ldots,H_k$ strictly containing $F \cup G$ or $F'\cup G'$. Thus the desired inclusion follows from $F\cup G \subsetneq F'\cup G'$.
\end{proof}
\begin{lemma}
    Let $F,G$ be nonempty flats in $M,N$ respectively such that exactly one of the equalities $F=M,G=N$ holds. Then the summand $\ch(M_F \oplus N_G) \otimes \ch(M^F) \otimes \ch(N^G)[-1]$ is orthogonal to all the other summands in Theorem \ref{thm1} except for $\ch(M) \otimes \ch(N) [-1]$.
\end{lemma}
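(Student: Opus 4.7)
The strategy is to mirror the proof of Lemma \ref{lemortogonalnosc}, carefully tracking the boundary. By the $M \leftrightarrow N$ symmetry I may assume $F = M$ and $G \subsetneq N$ is a nonempty proper flat. A key preliminary I would establish is a degree count: since $M_M$ is the empty matroid and $M^M = M$, the summand in question is $\ch(N_G) \otimes \ch(M) \otimes \ch(N^G)[-1]$, whose top nonzero degree is $(\mathrm{rk}(N)-\mathrm{rk}(G)-1) + (\mathrm{rk}(M)-1) + (\mathrm{rk}(G)-1) + 1 = \mathrm{rk}(M) + \mathrm{rk}(N) - 2$, strictly below the top degree $\mathrm{rk}(M) + \mathrm{rk}(N) - 1$ of $\ch(M \oplus N)$. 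Hence $\mathrm{im}(u_{M,G})$ vanishes in top degree, and the analogous count shows the same for every $\mathrm{im}(u_{F',G'})$ except $(F',G')=(M,N)$---this is exactly why the latter summand is singled out as the excluded one.

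With this in place, orthogonality with $\mathrm{im}(\iota_{M,N})$ follows verbatim from Lemma \ref{lemortogonalnosc}: $\mathrm{im}(u_{M,G})$ is a $\ch(M) \otimes \ch(N)$-submodule under $\iota_{M,N}$, so any product stays inside it and vanishes in top degree. For orthogonality with $\mathrm{im}(u_{F',G'})$ for a pair $(F',G') \neq (M,G),(M,N)$ of nonempty flats, I would split according to the relative position of $F' \cup G'$ and $M \cup G$ in the lattice of flats of $M \oplus N$. If they are incomparable, $x_{M \cup G} \cdot x_{F' \cup G'} = 0$ kills the product outright. If $M \cup G \subsetneq F' \cup G'$, then $F' \supseteq M$ forces $F' = M$ and $G \subsetneq G' \subsetneq N$; every generator $x_{M \cup G'} x_{H'_1} \cdots x_{H'_k}$ of $\mathrm{im}(u_{M,G'})$ has each factor strictly containing $M \cup G$, so products with elements of $\mathrm{im}(u_{M,G})$ land back in $\mathrm{im}(u_{M,G})$ and hence in the top-degree zero locus. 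Symmetrically, if $F' \cup G' \subsetneq M \cup G$, then $x_{M \cup G}$ and all factors $x_{H_i}$ with $H_i \supsetneq M \cup G$ strictly contain $F' \cup G'$, so products lie in $\mathrm{im}(u_{F',G'})$, which vanishes in top degree by the preliminary count since $(F',G') \neq (M,N)$. This trichotomy exhausts all cases: configurations with $G' = N$ and $F' \subsetneq M$ nonempty fall automatically into the incomparable case, because $N \not\subseteq G$ and $F' \not\supseteq M$ prevent either of $F' \cup N$, $M \cup G$ from containing the other.

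The main obstacle is organizational rather than conceptual: each individual step directly parallels Lemma \ref{lemortogonalnosc}, and the only genuinely new input is the preliminary degree bound that isolates $(M,N)$ as the unique pair whose corresponding summand reaches the top degree of $\ch(M \oplus N)$---which is exactly the content of the exclusion in the statement.
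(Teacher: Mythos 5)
Your proposal is correct and takes essentially the same approach as the paper, which itself only says the proof is identical to that of Lemma~\ref{lemortogonalnosc} with the sole modification that $\ch(M)\otimes\ch(N)[-1]$ is no longer orthogonal to $\mathrm{im}(u_{F,G})$. You correctly reduce to $F=M$ by symmetry, reuse the submodule argument for orthogonality to $\iota_{M,N}$, split the $u_{F',G'}$ case by the comparability of $F'\cup G'$ with $M\cup G$ exactly as in Lemma~\ref{lemortogonalnosc}, and supply the top-degree count that singles out $(M,N)$ as the only pair whose summand reaches degree $\mathrm{rk}(M)+\mathrm{rk}(N)-1$---precisely the point where the Lemma~\ref{lemortogonalnosc} argument breaks and the exception in the statement originates.
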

\begin{proof}
    The proof is exactly the same as the proof of Lemma \ref{lemortogonalnosc} except that the factor $\ch(M) \otimes \ch(N) [-1]$ is not Poincaré orthogonal to $\mathrm{im}(u_{F,G})$.
\end{proof}
\begin{lemma}\label{zach}
    Let $F,G$ be nonempty flats such that $F \cup G \neq M \cup N$. Then the inclusion
    \[ u_{F,G}:  \ch(M_F \oplus N_G) \otimes \ch(M^F) \otimes \ch(N^G)[-1] \rightarrow \ch(M\oplus N) \]
    preserves the Poincaré pairing on the two Chow rings up to multiplication by $-1$.
\end{lemma}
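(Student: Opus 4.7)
The plan is to unwind the Poincar\'e pairing on $\ch(M\oplus N)$ through the projection formula, reduce everything to a top-degree computation on the star ring $\ch(M_F\oplus N_G)\ot\ch(M^F\oplus N^G)$, and then isolate a single combinatorial identity concerning the inclusion $\iota_{M^F,N^G}$.

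Write $E=F\cup G$, $\tilde a=(\mathrm{id}\ot\iota_{M^F,N^G})(a)$ and $\tilde b=(\mathrm{id}\ot\iota_{M^F,N^G})(b)$, so that $u_{F,G}(a)=\underline{\psi}^{E}_{M\oplus N}(\tilde a)$ and similarly for $b$. The projection formula together with the self-intersection identity $\underline{\varphi}^{E}\underline{\psi}^{E}(\beta)=\beta\cdot\underline{\varphi}^{E}(x_E)$ — which follows from $\underline{\psi}^{E}\underline{\varphi}^{E}=x_E\cdot(-)$ and the injectivity of $\underline{\psi}^{E}$ (\cite[Proposition 2.27]{ssd}) — yields
\[
\underline{\psi}^{E}(\tilde a)\cdot\underline{\psi}^{E}(\tilde b)=\underline{\psi}^{E}\bigl(\tilde a\tilde b\cdot\underline{\varphi}^{E}(x_E)\bigr).
\]
Since $\underline{\mathrm{deg}}_{M\oplus N}\circ\underline{\psi}^{E}$ coincides with the tensor-product degree map on the star ring (the pushforward $\underline{\psi}^{E}$ being along a codimension-one inclusion), the left-hand side of the lemma becomes
\[
\underline{\mathrm{deg}}_{M_F\oplus N_G}\ot\underline{\mathrm{deg}}_{M^F\oplus N^G}\bigl(\tilde a\tilde b\cdot\underline{\varphi}^{E}(x_E)\bigr).
\]

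Next, I would substitute \cite[Proposition 2.24]{ssd} applied to $M\oplus N$ with $i\in F$ and $j\notin E$ (which exists because $E\neq M\cup N$):
\[
\underline{\varphi}^{E}(x_E)=-\,1\ot A-B\ot 1,
\]
where $A=\sum_{K\ni i,\,K\subsetneq E}x_K\in\ch(M^F\oplus N^G)$ and $B\in\ch(M_F\oplus N_G)$ is the analogous sum on the other side. The key observation is a top-degree cancellation: $\tilde a\tilde b=(\mathrm{id}\ot\iota_{M^F,N^G})(ab)$ with $ab$ at source top degree $d_M+d_N-3$ forces its first tensor factor to lie at the top degree of $\ch(M_F\oplus N_G)$, so multiplication by $B\ot 1$ raises that factor past the top and vanishes. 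Only the $-1\ot A$ term contributes.

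This reduces the claim to a sublemma involving only the inclusion $\iota_{M^F,N^G}$: for every top-degree $C\in\ch(M^F)\ot\ch(N^G)$,
\[
\underline{\mathrm{deg}}_{M^F\oplus N^G}\bigl(\iota_{M^F,N^G}(C)\cdot A\bigr)=\underline{\mathrm{deg}}_{M^F}\ot\underline{\mathrm{deg}}_{N^G}(C).
\]
By bilinearity one may assume $C=c_2\ot c_3$ with $c_2=x_{F_1}\cdots x_{F_{r-1}}$ and $c_3=x_{G_1}\cdots x_{G_{s-1}}$ maximal-flag monomials. Expanding $\iota_{M^F,N^G}(c_2\ot 1)$ and $\iota_{M^F,N^G}(1\ot c_3)$ via the shuffle description $\iota(x_{F'}\ot 1)=\sum_{G'\subseteq G}x_{F'\cup G'}$ and multiplying by $A$, the incomparability relations in $\ch(M^F\oplus N^G)$ kill every summand except those indexed by maximal flags of proper flats in $M^F\oplus N^G$. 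Using the independence of $A$ on the choice of $i$ (a consequence of the linear relations in $\ch(M^F\oplus N^G)$) to organize the sum, a direct count — keyed to whether the top flat in each max flag of $M^F\oplus N^G$ has projection $F$ or projection $G$ — shows the total contributes exactly $1=\underline{\mathrm{deg}}(c_2)\cdot\underline{\mathrm{deg}}(c_3)$.

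The main obstacle is establishing the sublemma cleanly: the expansion of $\iota_{M^F,N^G}(c_2\ot c_3)\cdot A$ produces many shuffle-indexed summands, and the incomparability-relation bookkeeping must be matched carefully with the structure of maximal flags in a direct-sum matroid. Once it is in hand, combining with the sign from $-1\ot A$ yields
\[
(u_{F,G}(a),u_{F,G}(b))_{\ch(M\oplus N)}=-\underline{\mathrm{deg}}_{\mathrm{source}}(ab)=-(a,b)_{\mathrm{source}},
\]
as claimed.
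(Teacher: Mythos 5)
Your reduction is set up correctly: the projection formula together with injectivity of $\underline{\psi}^E$ gives $u_{F,G}(a)u_{F,G}(b)=\underline{\psi}^{E}(\tilde a\tilde b\cdot\underline{\varphi}^{E}(x_E))$, the degree map does commute with the codimension-one pushforward, the expansion $\underline{\varphi}^{E}(x_E)=-1\otimes A-B\otimes 1$ via \cite[Proposition 2.24]{ssd} is the right tool, and the observation that the $B\otimes 1$ term dies because the first tensor factor of $\tilde a\tilde b$ is already in top degree is valid. So you have correctly reduced the lemma to the sublemma
\[
\underline{\mathrm{deg}}_{M^F\oplus N^G}\bigl(\iota_{M^F,N^G}(C)\cdot A\bigr)=\underline{\mathrm{deg}}_{M^F}\otimes\underline{\mathrm{deg}}_{N^G}(C)
\]
for top-degree $C$.

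The problem is that this sublemma is exactly where the real work lives, and you have not proved it --- you acknowledge as much yourself. The ``direct count'' you invoke is nontrivial: expanding $\iota_{M^F,N^G}(c_2\otimes c_3)\cdot A$ produces a sum of monomials indexed by interleavings of the two flags together with a choice of flat in $A$, and the surviving terms (after the incomparability relations) are \emph{not} simply maximal flags with coefficient $1$; squares such as $x_K^2$ appear and contribute negative degrees, and you would need to track the cancellation carefully. Asserting that ``a direct count... shows the total contributes exactly $1$'' is a gap, not a proof. In contrast, the paper sidesteps this entirely: it notes that $u_{F,G}(a)\cdot x_{F'\cup G}$ lies in the image of some $u$-map for every $F'$ with $\emptyset\subsetneq F'\subsetneq F$ or $F\subsetneq F'\subseteq M$, hence has degree zero; combining this with $\underline{\mathrm{deg}}\bigl(u_{F,G}(a)\cdot\iota_{M,N}(x_G)\bigr)=0$ and $\iota_{M,N}(x_G)=\sum_{F'}x_{F'\cup G}$ yields $\underline{\mathrm{deg}}(u_{F,G}(a)\cdot x_{F\cup G})=-\underline{\mathrm{deg}}(u_{F,G}(a)\cdot x_{\emptyset\cup G})$. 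The right-hand side is then computed directly, because multiplying by $x_{\emptyset\cup G}$ kills all but a single maximal-flag monomial via the incomparability relations. If you want to pursue your direct route, you must actually establish the sublemma; otherwise, the orthogonality trick from Lemma~\ref{lemortogonalnosc} is the cleaner path, since it converts the hard multiplier $x_{F\cup G}$ into the easy one $x_{\emptyset\cup G}$ at the cost of a sign.
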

\begin{proof}
    Recall from \cite[Proposition 2.25]{ssd} that $\underline{\psi}_{M\oplus N}^{F\cup G}$ is given on monomials by
    \[ \underline{\psi}_{M\oplus N}^{F\cup G}(\underset{F_i \cup G_i \in S_1}{\prod} x_{F_i \cup G_i \backslash F\cup G} \otimes \underset{F_i \cup G_i \in S_2}{\prod} x_{F_i \cup G_i}) = x_{F\cup G} \cdot \underset{F_i \cup G_i \in S_1}{\prod} x_{F_i \cup G_i} \cdot \underset{F_i \cup G_i \in S_2}{\prod} x_{F_i \cup G_i}\]
    for multisets $S_1$ of flats containing $F\cup G$ and $S_2$ of flats contained in $F\cup G$. This shows that for $c, d\in \ch(M_F) \otimes \ch(M^F)$ we have
    \[\underline{\psi}^{F\cup G}_{M\oplus N}(c) \cdot \underline{\psi}^{F\cup G}_{M\oplus N}(d) = \underline{\psi}^{F\cup G}_{M\oplus N}(c\cdot d) \cdot \underline{\psi}^{F\cup G}_{M\oplus N}(1). \]
    Since $\iota_{M^F,N^G}$ is a ring homomorphism we can apply the equation above to
    \[ c := \mathrm{id}\otimes \iota_{M^F,N^G}(a) \text{ and } d:= \mathrm{id}\otimes \iota_{M^F,N^G}(b) \]
    to get
    \[u_{F,G}(a) \cdot u_{F,G}(b) = u_{F,G}(a\cdot b) \cdot u_{F,G}(1) = u_{F,G}(a\cdot b) \cdot x_{F,G}. \]
    Thus to check that $u_{F,G}$ preserves the Poincaré pairing up to $-1$ it is enough to check that
    \[ \mathrm{deg}_{\ch(M\oplus N)}(u_{F,G}(-) \cdot x_{F,G}) = -\mathrm{deg}_{\ch(M_F\oplus N_G)\otimes \ch(M^F) \otimes \ch(N^G)}(-).\]
    We verify this statement on the monomials. It is enough to consider those corresponding to maximal flags in the respective Chow rings. Consider
    \[ a:=  \underset{F_i \cup G_i \in S_1}{\prod} x_{F_i \cup G_i \backslash F\cup G} \otimes \underset{F_i \in S_2}{\prod} x_{F_i}  \otimes \underset{G_i\in S_3}{\prod}x_{G_i},\]
    where $S_1$ is a full flag of flats between $F\cup G$ and $M\oplus N$, $S_2$ is a full flag of flats of $M$ contained in $F$, and $S_3$ is a full flag of flats of $N$ contained in $G$. Note that by definition
    \[ \mathrm{deg}_{\ch(M_F\oplus N_G)\otimes \ch(M^F) \otimes \ch(N^G)}(a) = 1. \]\\
    Assume without loss of generality that $G \neq N$. By the arguments in the proof of Lemma \ref{lemortogonalnosc} $u_{F,G}(a) \cdot x_{F'\cup G}$ lies in the image of $u_{F,G}$ for $F \subsetneq F' \subseteq M$ and in the image of $u_{F',G}$ for $\emptyset \subsetneq F' \subsetneq F$. Since those images lie in degrees $\leq \mathrm{rk}(M)+\mathrm{rk}(N)-2$, we have
    \begin{align}\label{degrow1}
        \mathrm{deg}_{\ch(M\oplus N)}(u_{F,G}(a) \cdot x_{F' \cup G}) = 0.
    \end{align}
    Furthermore, as $\mathrm{im}(u_{F,G})$ is a $\ch(M)\otimes \ch(N)$-submodule, we have $u_{F,G}(a)\cdot \iota_{M,N}(x_G) \in \mathrm{im}(u_{F,G})$ and similarly
    \begin{align}\label{degrow2}
        \mathrm{deg}_{\ch(M\oplus N)}(u_{F,G}(a) \cdot \iota_{M,N}(x_G)) = 0.
    \end{align}
    Subtracting Equation \ref{degrow1} from Equation \ref{degrow2} for all $F'$ as above we get
    \[ \mathrm{deg}_{\ch(M\oplus N)}(u_{F,G}(a) \cdot x_{F\cup G}) + \mathrm{deg}_{\ch(M\oplus N)}(u_{F,G}(a) \cdot x_{\emptyset \cup G})  = 0. \]
    Thus it is enough to show that
    \[ \mathrm{deg}_{\ch(M\oplus N)}(u_{F,G}(a) \cdot x_{\emptyset \cup G}) = 1. \]
    First note that by the incomparability relations in $J_{M\oplus N}$ for each $F_i \in S_2$ the product $x_{F_i\cup G'}x_{\emptyset \cup G}=0$ for any $G'\subsetneq G$. Similarly, $x_{F_i\cup G'}x_{F \cup G}$ for any $G\subsetneq G'$. Furthermore, for each $G_i \in S_3$ we have $x_{F'\cup G_i}x_{\emptyset \cup G} = 0$ for any $F' \neq \emptyset$. Thus all but one monomial in the expansion of $u_{F,G}(a) \cdot x_{\emptyset \cup G}$ vanish and we get
    \[ u_{F,G}(a) \cdot x_{\emptyset \cup G} = \underset{G_i \in S_3}{\prod} x_{\emptyset\cup G_i} \cdot  x_{\emptyset \cup G} \cdot \underset{F_i\in S_2}{\prod} x_{F_i \cup G} \cdot x_{F,G} \cdot \underset{F_i\cup G_i \in S_1}{\prod} x_{F_i \cup G_i}. \]
    Since the flats corresponding to the variables in the monomial above form a maximal flag in $M\oplus N$, the degree map is, by definition, $1$ on $u_{F,G}(a) \cdot x_{\emptyset \cup G}$.
\end{proof}

We are now ready to prove the main theorem of this section.

\begin{theorem}\label{phiinjective}
    The homomorphism
    \[ \phi_{M,N}:=\iota_{M,N} \oplus \underset{\emptyset \subsetneq F\subseteq M, \emptyset \subsetneq N \subseteq M}{\bigoplus} u_{F,G}
    \]
    mapping
    \[ \ch(M)\ot \ch(N) \oplus \underset{F,G}{\bigoplus} \ch(M_F \oplus N_G) \ot \ch(M^F) \ot \ch(N^G) [-1]
    \]
    into $\ch(M\oplus N)$ is injective.
\end{theorem}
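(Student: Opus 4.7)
The plan is to exploit the non-degenerate Poincaré pairing on $\ch(M\oplus N)$ supplied by the Kähler package, together with the three lemmas just established -- Lemma \ref{lemortogonalnosc}, its companion for the one-sided edge summands, and the pairing-preservation Lemma \ref{zach} -- to isolate and annihilate each component of a putative kernel element. Write $\alpha \in \ker \phi_{M,N}$ as $\alpha = \iota_{M,N}(\alpha_0) + \sum_{F,G} u_{F,G}(\alpha_{FG})$, summed over nonempty flats $F$ of $M$ and $G$ of $N$.

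For each pair of interior flats, meaning both $F\subsetneq M$ and $G\subsetneq N$ are proper nonempty, I pair the identity $\phi_{M,N}(\alpha)=0$ in $\ch(M\oplus N)$ against $u_{F,G}(\beta)$ for arbitrary $\beta$ in the corresponding source module. Lemma \ref{lemortogonalnosc} eliminates every cross-term, and Lemma \ref{zach} converts the surviving self-pairing into $-(\alpha_{FG},\beta)_{\mathrm{source}}$. The source is a tensor product of Chow rings of loopless matroids, each carrying a non-degenerate Poincaré pairing by the Kähler package, hence the source pairing is non-degenerate and $\alpha_{FG}=0$ follows.

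The remaining \emph{edge sector} -- $\iota_{M,N}(\alpha_0)$, $u_{M,N}(\alpha_{MN})$, and $u_{M,G}(\alpha_{MG}), u_{F,N}(\alpha_{FN})$ for proper nonempty $F,G$ -- I would handle by downward induction on graded degree. A quick rank computation shows that among the $u_{F,G}$ summands only $u_{M,N}$ reaches top degree $d-1$ of $\ch(M\oplus N)$, while $\iota_{M,N}$ misses top degree entirely; therefore the top-degree component of $\phi_{M,N}(\alpha)=0$ reads $u_{M,N}(\alpha_{MN}^{d-1})=0$, and injectivity of $u_{M,N}$ (which is a pushforward along a closed immersion of toric varieties) forces $\alpha_{MN}^{d-1}=0$. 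At each lower degree $k$, pairing $\phi_{M,N}(\alpha)=0$ with the various $u_{M,G}(\beta)$, $u_{F,N}(\beta)$ and $u_{M,N}(\beta)$ and invoking the companion orthogonality lemma reduces the resulting identities to a small system coupling only the remaining edge components with $u_{M,N}$, which the inductive hypothesis (vanishing in higher degrees) together with non-degeneracy of the source pairings resolves, yielding $\alpha_{MG}^k=\alpha_{FN}^k=\alpha_{MN}^k=0$. Once every $\alpha_{FG}$ vanishes, the injectivity of $\iota_{M,N}$ noted at the start of the section gives $\alpha_0=0$.

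The main technical obstacle is precisely this coupling in the edge sector: Lemma \ref{zach} does not apply to $u_{M,N}$ since its hypothesis $F\cup G\neq M\cup N$ fails there, and the orthogonality statements leave the cross-pairings between $u_{M,N}$ and the remaining edge maps unresolved. The downward degree induction is designed to finesse exactly this, using the higher-degree vanishing to kill the troublesome coupling step by step; the explicit pushforward formula of Proposition \ref{formulapsichow} is available to compute any cross-pairing that must be evaluated directly along the way.
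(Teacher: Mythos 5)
Your treatment of the interior pairs, where both $F$ and $G$ are proper and nonempty, is exactly the paper's argument: Lemma~\ref{lemortogonalnosc} gives orthogonality, Lemma~\ref{zach} converts the self-pairing to the (non-degenerate) source pairing, and each $\alpha_{FG}$ dies. You also correctly pinpoint the obstacle in the edge sector -- $u_{M,N}$ falls outside the hypotheses of Lemma~\ref{zach}, and the companion orthogonality statement leaves the cross-pairings between $\mathrm{im}(u_{M,N})$ and the $u_{M,G}$, $u_{F,N}$ summands unresolved.

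The gap is in the proposed downward degree induction, which does not actually break the coupling. Pair $\phi_{M,N}(\alpha)=0$ against $u_{M,G}(\beta)$ with $\beta$ of complementary degree: by the companion lemma the surviving terms are $-(\alpha_{MG}^{k},\beta)_{\mathrm{source}}$ and $(u_{M,N}(\alpha_{MN}^{k}),u_{M,G}(\beta))$, where both $\alpha$-components live in the \emph{same} graded degree $k$. The inductive hypothesis only controls degrees strictly above $k$, so it contributes nothing here; the system at degree $k$ is genuinely coupled at degree $k$. Moreover, if you also pair against $u_{M,N}(\beta)$ to try to pin down $\alpha_{MN}^{k}$, note that $\iota_{M,N}(\alpha_{0})\cdot u_{M,N}(\beta)=u_{M,N}(\alpha_{0}\beta)$ can reach top degree, so $\iota_{M,N}(\alpha_{0})$ is \emph{not} orthogonal to $\mathrm{im}(u_{M,N})$ and the coupled system is larger than you describe, involving $\alpha_{0}$ as well. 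The paper avoids this entirely by pairing $\phi_{M,N}(\alpha)=0$ against elements $\iota_{M,N}(b)$ of the \emph{unshifted} summand $\ch(M)\otimes\ch(N)$ (equivalently, multiplying by $\iota_{M,N}(b)$ and applying $\underline{\mathrm{deg}}_{M\oplus N}$). Because every summand is a $\ch(M)\otimes\ch(N)$-submodule, this multiplication keeps each summand inside itself, and every summand \emph{except} $\mathrm{im}(u_{M,N})$ sits entirely in degrees $\leq d-2$; hence the degree map annihilates all of them, and one reads off $\underline{\mathrm{deg}}_{\ch(M)\otimes\ch(N)}(\alpha_{MN}\cdot b)=0$ for all $b$, so $\alpha_{MN}=0$ by Poincar\'e duality on $\ch(M)\otimes\ch(N)$. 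Only after $\alpha_{MN}=0$ is established does the companion orthogonality lemma suffice to kill $\alpha_{MG}$ and $\alpha_{FN}$ by the same pairing argument as in the interior case, and finally injectivity of $\iota_{M,N}$ gives $\alpha_{0}=0$.
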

\begin{proof}
    Assume that there exists an element
    \[ a:= (a_0 , \{a_{F,G}\}_{F,G}) \]
    such that $\phi_{M,N}(a) = 0$, were
    \[ a_0 \in  \ch(M)\ot \ch(N) \text{ and } a_{F,G} \in \ch(M_F \oplus N_G) \ot \ch(M^F) \ot \ch(N^G) \]
    for any nonempty flats $F,G$ in $M,N$ respectively. Fix two nonempty, proper flats $F,G$ in $M,N$ and assume that $a_{F,G} \neq 0$. Then we can pick $b\in \ch(M_F \oplus N_G) \ot \ch(M^F) \ot \ch(N^G)$ such that
    \[ (a_{F,G},b)_{\ch(M_F \oplus N_G) \ot \ch(M^F) \ot \ch(N^G)}=1. \]
    Then by Lemma \ref{zach} we have
    \[ (u_{F,G}(a_{F,G}),u_{F,G}(b))_{\ch(M\oplus N)} = 1.\]
    However, by Lemma \ref{lemortogonalnosc} 
    \[ (\iota_{M,N}(a_0),u_{F,G}(b))_{\ch(M\oplus N)} = 0 \text{ and } (u_{F',G'}(a_{F',G'}),u_{F,G}(b))_{\ch(M\oplus N)} = 0 \]
    for all pairs of nonempty flats $F',G'$ in $M,N$ such that $F'\cup G' \neq F \cup G$. Thus we must have
    \[ (\phi(a),u_{F,G}(b))_{\ch(M\oplus N)} =1, \]
    a contradiction.\\
    Now assume that $a_{M,N} \neq 0$. Then we can multiply $a_{M,N}$ by an element of $b\in \ch(M) \otimes \ch(N)$ such that
    \[\mathrm{deg}_{\ch(M) \otimes \ch(N)}(a_{M,N} \cdot b) = 1. \]
    Then similarly, since all summands other than $\ch(M) \otimes \ch(N) [-1]$ lie in degrees $\leq \mathrm{rk}(M) + \mathrm{rk}(N) -2$, we would have
    \[ \mathrm{deg}_{\ch(M\oplus N)}(\phi(a) \cdot b) = 1, \]
    a contradiction. Assume $a_{M,G} \neq 0$ for some flat $\emptyset \subsetneq G \subsetneq N$ and choose $b\in \ch(N_G) \otimes \ch(M) \otimes \ch(N^G)$ such that
    \[ (a_{M,G}, b)_{\ch(N_G) \otimes \ch(M) \otimes \ch(N^G)} =1.\]
    Recall that by Lemma \ref{lemortogonalnosc} $\mathrm{im}(u_{M,G})$ is orthogonal to all the summands other than $\mathrm{im}(u_{M,N})$ and itself. Then since $a_{M,N}=0$ we can similarly apply Lemma \ref{zach} to get
    \[ (\phi(a),u_{M,G})_{\ch(M\oplus N)} = 1, \]
    a contradiction. An analogous argument gives $a_{F,N} = 0$ for all nonempty, proper flats $F$. Now $a_0 = 0$ follows from the injectivity of $\iota_{M,N}$, so indeed $a=0$, which proves injectivity of $\phi$.
\end{proof}

We will now define the maps which induce the decomposition in Theorem \ref{thm2}. The inclusion of the summand $\ch(M) \otimes \ch(N)$ is again given by $\iota_{M,N}$.

\begin{definition}
    For proper flats $F,G$ of $M,N$ such that $F \cup G \neq \emptyset$ there is a structure of an $\ch(M)\otimes \ch(N)$-module on $\ch(M^F \oplus N^G) \otimes \ch(M_F) \otimes \ch(N_G)$ defined by the composition of maps of rings
    \[ \varphi_{F} \otimes \varphi_G : \ch(M)\otimes \ch(N) \rightarrow \ch(M^F) \otimes \ch(M_F) \otimes \ch(N^F) \otimes \ch(N_F), \]
    the obvious map $\sigma$ permuting the factors of the tensor product, and the map of rings
    \[ \iota_{M^F,N^G} \otimes \mathrm{id} \otimes \mathrm{id}: \ch(M^F) \otimes \ch(N^G) \otimes \ch(M_F) \otimes \ch(N_G) \rightarrow \ch(M^F \oplus N^G) \otimes \ch(M_F) \otimes \ch(N_G).\]
\end{definition}

\begin{definition}
    Let $F,G$ be proper flats of $M,N$ respectively such that $F \cup G \neq \emptyset$. Then we define
    \[ u'_{F,G}: \ch(M^F \oplus N^G) \otimes \ch(M_F) \otimes \ch(N_G) [-1] \rightarrow \ch(M \oplus N) \]
    by the composition of
    \[ \mathrm{id}\otimes \iota_{M_F,N_G}: \ch(M^F \oplus N^G) \otimes \ch(M_F) \otimes \ch(N_G) [-1] \rightarrow \ch(M^F \oplus N^G) \otimes \ch(M_F \oplus N_G) [-1] \]
    and
    \[ \underline{\psi}^{F\cup G}_{M\oplus N}: \ch(M^F \oplus N^G) \otimes \ch(M_F \oplus N_G) [-1] \rightarrow \ch(M\oplus N). \]
\end{definition}

\begin{definition}
    We define the map
    \[ u_{\emptyset,\emptyset} : \ch(M) \otimes \ch(N)[-1] \rightarrow \ch(M \oplus N) \]
    by the pushforward homomorphism $\underline{\psi}_M^{M\oplus N}$.
\end{definition}

Then the isomorphism in Theorem \ref{thm2} is given by the direct sum of maps
\[ \phi' := \iota_{M,N} \oplus \underset{F\subsetneq M,G \subsetneq N}{\bigoplus} u'_{F,G}. \]
Methods analogous to the ones presented above show the following.
\begin{theorem}
    The map $\phi'$ defined above is injective.
\end{theorem}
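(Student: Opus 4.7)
The plan is to run the argument of Theorem \ref{phiinjective} verbatim, replacing each $u_{F,G}$ with $u'_{F,G}$ and interchanging the exceptional role played by $u_{M,N}$ with that of $u_{\emptyset,\emptyset}$. Three ingredients are needed: the injectivity of $\iota_{M,N}$, already proved; a pairwise orthogonality statement for the images of $u'_{F,G}$ under the Poincaré pairing on $\ch(M\oplus N)$ (analog of Lemma \ref{lemortogonalnosc}); and an isometry statement saying that each $u'_{F,G}$ preserves the Poincaré pairing up to a sign (analog of Lemma \ref{zach}). Given these, the Poincaré-duality argument of Theorem \ref{phiinjective} transposes step-by-step.

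First I would establish orthogonality. Every element of $\mathrm{im}(u'_{F,G})$ is a multiple of $x_{F\cup G}$, since $u'_{F,G}$ factors through $\underline{\psi}_{M\oplus N}^{F\cup G}$; this gives orthogonality of $\mathrm{im}(u'_{F,G})$ and $\mathrm{im}(u'_{F',G'})$ whenever $F\cup G$ and $F'\cup G'$ are incomparable flats of $M\oplus N$. For comparable pairs $F\cup G\subsetneq F'\cup G'$, Proposition \ref{formulapsichow} describes $\mathrm{im}(u'_{F,G})$ as the $\ch(M)\otimes\ch(N)$-submodule spanned by monomials $x_{F\cup G}\cdot x_{H_1}\cdots x_{H_k}$ where each $H_i$ is now strictly \emph{contained in} $F\cup G$ (the direction of containment is reversed relative to Lemma \ref{lemortogonalnosc} because in Theorem \ref{thm2} it is the restriction factor $\ch(M^F\oplus N^G)$, not the contraction factor, that is being pushed forward). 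It then follows that $\mathrm{im}(u'_{F,G})\cdot\mathrm{im}(u'_{F',G'})\subset \mathrm{im}(u'_{F',G'})$, which vanishes in top degree. Orthogonality with $\mathrm{im}(\iota_{M,N})$ is automatic from the $\ch(M)\otimes\ch(N)$-module structure.

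Next I would establish the isometry statement. Proposition \ref{formulapsichow} together with the fact that $\iota_{M_F,N_G}$ is a ring homomorphism yields the projection-formula identity
\[ u'_{F,G}(a)\cdot u'_{F,G}(b) = u'_{F,G}(a\cdot b)\cdot x_{F\cup G}, \]
so it suffices to compare $\mathrm{deg}_{\ch(M\oplus N)}(u'_{F,G}(-)\cdot x_{F\cup G})$ with the degree map on $\ch(M^F\oplus N^G)\otimes \ch(M_F)\otimes\ch(N_G)$ on monomials indexed by maximal flags. The main obstacle is the degenerate case where $F=\emptyset$ or $G=\emptyset$: here the computation of Lemma \ref{zach} must be adapted by using $\iota_{M,N}(x_H)=\sum_K x_{H\cup K}$ together with the incomparability relations in $J_{M\oplus N}$ to cancel unwanted terms and land on a single monomial supported on a genuine maximal flag of $M\oplus N$.

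Granted the two lemmas, injectivity of $\phi'$ is deduced exactly as in Theorem \ref{phiinjective}. If $\phi'(a)=0$ for $a=(a_0,\{a'_{F,G}\})$ with some $a'_{F,G}\neq 0$ and $F\cup G\neq\emptyset$, then pairing with a dual element $b$ in the corresponding summand yields $(\phi'(a), u'_{F,G}(b))_{\ch(M\oplus N)}=\pm 1$ by the isometry, while orthogonality forces all other summands to contribute zero; this contradicts $\phi'(a)=0$. The exceptional component $a'_{\emptyset,\emptyset}$ is handled in the spirit of the $a_{M,N}\neq 0$ case of Theorem \ref{phiinjective}: since the shifted summand $\ch(M)\otimes\ch(N)[-1]$ reaches the top degree $\mathrm{rk}(M)+\mathrm{rk}(N)-1$ of $\ch(M\oplus N)$ while every other shifted summand has top degree at most $\mathrm{rk}(M)+\mathrm{rk}(N)-2$, multiplying $a'_{\emptyset,\emptyset}$ by a suitable $b\in\ch(M)\otimes\ch(N)$ forces $\mathrm{deg}_{\ch(M\oplus N)}(\phi'(a)\cdot b)=1$, a contradiction. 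Finally $a_0=0$ follows from the injectivity of $\iota_{M,N}$.
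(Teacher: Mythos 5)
Your proposal is correct and follows exactly the approach the paper intends: the paper itself gives no proof of this theorem, merely remarking that ``methods analogous to the ones presented above'' apply, and your transposition of the three ingredients (injectivity of $\iota_{M,N}$, the orthogonality lemma with containments reversed, and the isometry lemma) is the right execution of that remark. One small observation worth keeping in mind: your orthogonality argument in fact shows that for $\phi'$ \emph{all} summands $\mathrm{im}(u'_{F,G})$ with $(F,G)\neq(\emptyset,\emptyset)$ are orthogonal to $\mathrm{im}(u'_{\emptyset,\emptyset})$ as well (since the product lands in the larger, non-exceptional $\mathrm{im}(u'_{F',G'})$, which vanishes in top degree), so the case analysis here is actually a bit cleaner than in the proof of Theorem \ref{phiinjective}, where the summands $\mathrm{im}(u_{M,G})$ and $\mathrm{im}(u_{F,N})$ fail to be orthogonal to the exceptional $\mathrm{im}(u_{M,N})$ and require a separate step.
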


We now define the maps inducing the decomposition in Theorem \ref{thmaug}.

\begin{definition}
    Given two matroids $M, N$ we define the inclusion of rings
    \[ \tilde{\iota}_{M,N}: \ach(M) \otimes \ach(N) \rightarrow \ach(M\oplus N) \]
    defined by sending
    \[ x_F\otimes 1 \mapsto \underset{G'\subseteq N}{\sum}x_{F \cup G'} \ \text{ and }\ 1\otimes x_G \mapsto \underset{F'\subseteq M}{\sum} x_{F' \cup G}. \]
\end{definition}
It is the morphism of Chow rings induced by the map of augmented Bergman fans given by the canonical projections
\[ \pi_{M}: \mathbb{R}^{[m] \oplus [n]} \twoheadrightarrow \mathbb{R}^{[m]} \text{ and } \pi_N: \mathbb{R}^{[m] \oplus [n]} \twoheadrightarrow \mathbb{R}^{[n]} \]
onto the ground sets of $M,N$. A direct calculation on maximal flags shows that it is injective.

\begin{definition}
    For proper flats $F,G$ of $M,N$ such that $F \cup G \neq \emptyset$ there is a structure of an $\ach(M)\otimes \ach(N)$-module on $\ach(M^F \oplus N^G) \otimes \ch(M_F) \otimes \ch(N_G)$ defined by the composition of maps of rings
    \[ \varphi_{F} \otimes \varphi_G : \ach(M)\otimes \ach(N) \rightarrow \ach(M^F) \otimes \ch(M_F) \otimes \ach(N^F) \otimes \ch(N_F), \]
    the obvious map $\sigma$ permuting the factors of the tensor product, and the map of rings
    \[ \tilde{\iota}_{M^F,N^G} \otimes \mathrm{id} \otimes \mathrm{id}: \ach(M^F) \otimes \ach(N^G) \otimes \ch(M_F) \otimes \ch(N_G) \rightarrow \ach(M^F \oplus N^G) \otimes \ch(M_F) \otimes \ch(N_G).\]
\end{definition}

\begin{definition}
    Let $F,G$ be proper flats of $M,N$ respectively such that $F\cup G \neq \emptyset$. We define an inclusion
    \[ \tilde{u}_{F,G} : \ach(M^F \oplus N^G) \otimes \ch(M_F)\otimes \ch(N_G)  [-1] \rightarrow \ach(M\oplus N) \]
    of graded $\ach(M) \otimes \ach(N)$-modules as the composition of
    \[ \mathrm{id} \otimes \iota_{F,G}: \ach(M^F \oplus N^G) \otimes \ch(M_F)\otimes \ch(N_G) [-1] \rightarrow \ach(M^F \oplus N^G) \otimes \ch(M_F \oplus N_G) [-1] \]
    and
    \[ \psi^{F \oplus G}_{M \oplus N} :  \ach(M^F \oplus N^G) \otimes \ch(M_F \oplus N_G) [-1] \rightarrow \ach(M\oplus N). \]
\end{definition}
Note that for any loopless matroid $M$ there is an inclusion of graded $\ach(M)$-modules
\[ \iota_{\emptyset}^{M}: \ch(M) \rightarrow \ach(M) \]
identifying $\ch(M)$ with the ideal generated by $x_{\emptyset}$.
\begin{definition}
    We define a map
    \[ \tilde{u}_{\emptyset,\emptyset}: \ch(M) \otimes \ch(N) [-1] \rightarrow \ach(M \oplus N) \]
    by the composition of
    \[ \iota_{M,N}[-1]: \ch(M) \otimes \ch(N)[-1] \rightarrow \ch(M \oplus N)[-1] \]
    with the map
    \[ \iota_{\emptyset}^{M\oplus N}:  \ch(M \oplus N) [-1] \rightarrow \ach(M \oplus N). \]
\end{definition}
We can now describe the map into $\ach(M\oplus N)$ which induces the isomorphism in Theorem \ref{thmaug} by taking the direct sum of the maps defined above.
\begin{theorem}\label{augphiinjective}
    The map $\tilde{\phi}$ defined by the direct sum
    \[ \tilde{\phi}:= \tilde{\iota}_{M,N} \oplus \underset{F \subsetneq M, G \subsetneq N}{\bigoplus} \tilde{u}_{F,G} \]
    is injective.
\end{theorem}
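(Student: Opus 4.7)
The plan is to follow the strategy of the proof of Theorem \ref{phiinjective}, replacing the Poincaré pairing on $\ch(M \oplus N)$ by the one on $\ach(M \oplus N)$ provided by the augmented Kähler package. Set $d := \mathrm{rk}(M) + \mathrm{rk}(N)$. The key preliminary observation is that, among the summands in the domain of $\tilde{\phi}$, only $\mathrm{im}(\tilde{\iota}_{M, N})$ reaches the top degree $d$ of $\ach(M \oplus N)$; every $\mathrm{im}(\tilde{u}_{F, G})$, including the boundary case $(F, G) = (\emptyset, \emptyset)$, is concentrated in degrees at most $d - 1$ because of the shift $[-1]$. This degree gap is what allows the subsequent orthogonality arguments to kick in.

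I would then establish augmented analogues of Lemmas \ref{lemortogonalnosc} and \ref{zach}. For the orthogonality: given two distinct pairs of proper flats $(F, G)$ and $(F', G')$, if $F \cup G$ and $F' \cup G'$ are incomparable, orthogonality of $\mathrm{im}(\tilde{u}_{F, G})$ and $\mathrm{im}(\tilde{u}_{F', G'})$ is immediate from the relation $x_{F \cup G}\, x_{F' \cup G'} = 0$; if $F \cup G \subsetneq F' \cup G'$, one uses that $\mathrm{im}(\tilde{u}_{F, G})$ is the $\ach(M) \otimes \ach(N)$-submodule spanned by monomials $x_{F \cup G} \cdot x_{H_1} \cdots x_{H_k}$ with $H_j \supsetneq F \cup G$, so the relevant products fall back into $\mathrm{im}(\tilde{u}_{F, G})$, which vanishes in degree $d$. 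The same degree argument yields orthogonality of each $\mathrm{im}(\tilde{u}_{F, G})$ to $\mathrm{im}(\tilde{\iota}_{M, N})$, exploiting that the former is a submodule via the latter. For pairing preservation, the projection formula for $\psi^{F \cup G}_{M \oplus N}$ gives
\[ \tilde{u}_{F, G}(a) \cdot \tilde{u}_{F, G}(b) = \tilde{u}_{F, G}(a \cdot b) \cdot \tilde{u}_{F, G}(1), \]
reducing the claim to computing $\mathrm{deg}_{\ach(M \oplus N)}(\tilde{u}_{F, G}(a) \cdot x_{F \cup G})$ on monomials corresponding to maximal flags in the augmented Bergman fan. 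Both families of relations in $\ach(M \oplus N)$, namely $x_H x_{H'} = 0$ for incomparable $H, H'$ and $y_i x_H = 0$ for $i \notin H$, kill all but a single term corresponding to a maximal cone of $\Sigma_{M \oplus N}$, which contributes $\pm 1$.

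Injectivity then follows by the same contradiction argument as in Theorem \ref{phiinjective}: if $\tilde{\phi}(a) = 0$ with $a = (a_0, \{a_{F, G}\})$, the Poincaré pairing with a suitably chosen test element in each summand forces the corresponding component of $a$ to vanish --- for the $a_{F, G}$ components by the orthogonality and pairing-preservation lemmas, and for $a_0$ by the injectivity of $\tilde{\iota}_{M, N}$ combined with the fact that only $\mathrm{im}(\tilde{\iota}_{M, N})$ contributes in degree $d$. The main obstacle is the careful treatment of the boundary case $(F, G) = (\emptyset, \emptyset)$: in $\ach(M \oplus N)$ the class $x_\emptyset$ is comparable to every flat, so orthogonality of $\mathrm{im}(\tilde{u}_{\emptyset, \emptyset})$ cannot come from incomparability of flats alone, and one must rely on the degree argument together with the additional relations $y_i x_\emptyset = 0$ valid for every $i \in [m] \sqcup [n]$. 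One also has to identify $\tilde{u}_{\emptyset, \emptyset}$ with a pushforward along the inclusion of $\mathrm{star}_{\Sigma_{M \oplus N}}(\rho_\emptyset)$ into $\Sigma_{M \oplus N}$ so that the pairing computation applies uniformly across all $(F, G)$.
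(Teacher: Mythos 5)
Your proposal is correct and follows the same strategy the paper sketches: transplant Lemma \ref{lemortogonalnosc} and Lemma \ref{zach} to the augmented setting and run the Poincaré-pairing contradiction argument from Theorem \ref{phiinjective}. The paper's proof is only a brief sketch, and your careful treatment of the boundary summand $(F,G)=(\emptyset,\emptyset)$ — recognizing $\tilde{u}_{\emptyset,\emptyset}$ as the pushforward along $\mathrm{star}_{\Sigma_{M\oplus N}}(\rho_\emptyset)$, noting that orthogonality there must come from the degree gap rather than incomparability, and observing that only $\mathrm{im}(\tilde{\iota}_{M,N})$ reaches the top degree $d$ — supplies exactly the details the paper leaves to the reader.
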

\begin{proof}[Sketch of the proof]
    An argument analogous to that in the proof of Lemma \ref{lemortogonalnosc} shows that the summands
    \[ \ach(M^F \oplus N^G) \otimes \ch(M_F) \otimes \ch(N_G) \]
    are pairwise orthogonal for all pairs of proper flats $F,G$ (including $F\cup G = \emptyset$). Furthermore, each summand above is orthogonal to $\ach(M) \otimes \ach(N)$, as it forms a $\ach(M) \otimes \ach(N)$ with the multiplication in $\ach(M\oplus N)$ agreeing with that induced by the inclusion $\tilde{\iota}_{M,N}$. It is not hard to verify that the Poincaré pairing on each factor is preserved up to a constant. This easily yields that an element of the direct sum mapping to zero has to be zero.
\end{proof}

\section{Hilbert functions and Chow functions of posets} \label{sectionposet}

Consider two weakly ranked posets $\mathcal{P}_1, \mathcal{P}_2$ with kernel functions $\kappa_1,\kappa_2$ and weak rank functions $\mathrm{rk}_1$, $\mathrm{rk}_2$. The goal of this section is to find general formulas for the Chow polynomial and the left $\kappa$-augmented Chow polynomial of $\mathcal{P}_1 \times \mathcal{P}_2$ using the respective invariants of $\mathcal{P}_1$ and $\mathcal{P}_2$. They will specialize to desired equalities between Hilbert functions of (augmented) Chow rings when $\mathcal{P}_1$ and $\mathcal{P}_2$ are lattices of flats of matroids $M$ and $N$.
\begin{definition}
    We define the product $\mathcal{P}_1 \times \mathcal{P}_2$ of weakly ranked posets $\mathcal{P}_1, \mathcal{P}_2$ to be a poset with:
    \begin{itemize}
        \item $\mathcal{P}_1 \times \mathcal{P}_2$ as the underlying set,
        \item the order given by
        \[ (s_1,s_2) \leq (t_1,t_2) \iff s_1 \leq t_1 \text{ and } s_2\leq t_2, \]
        \item a weak rank function defined as
        \[ \mathrm{rk}((s_1,s_2),(t_1,t_2)) = \mathrm{rk}_1(s_1,t_1) + \mathrm{rk}_2(s_2,t_2). \]
    \end{itemize}
\end{definition}
First construct a kernel function of $\mathcal{P}_1 \times \mathcal{P}_2$.

\begin{lemma}
    $\kappa_1 \otimes \kappa_2$ is a kernel function of $\mathcal{P}_1 \times \mathcal{P}_2$.
\end{lemma}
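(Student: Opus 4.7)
The plan is to verify the two defining properties of a kernel in three routine steps, with no genuine difficulty expected — the only thing to be careful about is the interaction of the tensor product with the $(-)^{\mathrm{rev}}$ operation and with inverses in the incidence algebra.

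First I would check that $\kappa_1 \otimes \kappa_2$ actually lies in $\mathcal{J}_{\mathrm{rk}}(\mathcal{P}_1 \times \mathcal{P}_2)$. The $((s_1,s_2),(t_1,t_2))$ entry is $(\kappa_1)_{s_1,t_1} \cdot (\kappa_2)_{s_2,t_2}$, which is nonzero only if $s_1 \leq t_1$ and $s_2 \leq t_2$, hence only if $(s_1,s_2) \leq (t_1,t_2)$, so the matrix sits inside the incidence algebra. The degree bound follows immediately from the additivity of $\mathrm{rk}$ on the product:
\[
\deg\bigl((\kappa_1)_{s_1,t_1}(\kappa_2)_{s_2,t_2}\bigr) \leq \mathrm{rk}_1(s_1,t_1)+\mathrm{rk}_2(s_2,t_2) = \mathrm{rk}\bigl((s_1,s_2),(t_1,t_2)\bigr).
\]

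Next I would record two compatibility identities. Convolution in the product poset factors as a tensor product of convolutions, so $(A_1 \otimes A_2)(B_1 \otimes B_2) = (A_1B_1) \otimes (A_2B_2)$ in $\mathcal{J}(\mathcal{P}_1\times\mathcal{P}_2)$; in particular $(\kappa_1\otimes\kappa_2)^{-1}=\kappa_1^{-1}\otimes \kappa_2^{-1}$. For the reversal, the additivity of $\mathrm{rk}$ gives
\[
\bigl((\kappa_1\otimes\kappa_2)^{\mathrm{rev}}\bigr)_{(s_1,s_2),(t_1,t_2)} = x^{\mathrm{rk}_1(s_1,t_1)}(\kappa_1)_{s_1,t_1}(x^{-1}) \cdot x^{\mathrm{rk}_2(s_2,t_2)}(\kappa_2)_{s_2,t_2}(x^{-1}),
\]
so $(\kappa_1\otimes\kappa_2)^{\mathrm{rev}} = \kappa_1^{\mathrm{rev}} \otimes \kappa_2^{\mathrm{rev}}$.

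Combining the two identities and using that each $\kappa_i$ is a kernel,
\[
(\kappa_1\otimes\kappa_2)^{-1} = \kappa_1^{-1}\otimes \kappa_2^{-1} = \kappa_1^{\mathrm{rev}}\otimes\kappa_2^{\mathrm{rev}} = (\kappa_1\otimes\kappa_2)^{\mathrm{rev}},
\]
which is the kernel condition. The only substantive point to double-check is that the convolution product in $\mathcal{J}(\mathcal{P}_1\times\mathcal{P}_2)$ really does split as a tensor product — i.e. that summing over $(s_1,s_2)\leq (u_1,u_2)\leq (t_1,t_2)$ is the same as independently summing over $s_i\leq u_i\leq t_i$ — but this is immediate from the definition of the order on the product poset.
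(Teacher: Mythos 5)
Your argument is correct and follows essentially the same route as the paper: observe that both $(-)^{\mathrm{rev}}$ and $(-)^{-1}$ commute with the tensor product in the incidence algebra, and the kernel condition for $\kappa_1\otimes\kappa_2$ then drops out. The paper states this in one line (citing the same displayed identity for $(-)^{\mathrm{rev}}$) and leaves the convolution-factors-as-tensor-product observation and the $\mathcal{J}_{\mathrm{rk}}$ membership implicit, which you spell out explicitly — a reasonable amount of extra care, not a different method.
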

\begin{proof}
    It is enough to observe that both the ${(-)}^{rev}$ and $(-)^{-1}$ operations commute with tensor product of matrices. This follows since
    \[ x^{\mathrm{rk}((s_1,s_2),(t_1,t_2))}(a_1 \otimes a_2)_{(s_1,s_2),(t_1,t_2)} (x^{-1}) = x^{\mathrm{rk}(s_1,t_1)}(a_1)_{s_1,t_1}(x^{-1}) \cdot x^{\mathrm{rk}(s_2,t_2)}(a_2)_{s_2,t_2}(x^{-1}). \qedhere \]
\end{proof}
In the following we denote the kernel function $\kappa_1 \otimes \kappa_2$ of $\mathcal{P}_1 \times \mathcal{P}_2$ by $\kappa$.
\begin{lemma}\label{tensorkls}
    \begin{enumerate}
        \item Let $f_1$ and $f_2$ be right KLS functions of $(\mathcal{P}_1, \kappa_1)$ and $(\mathcal{P}_2, \kappa_2)$ respectively. Then $f_1 \otimes f_2$ is a right KLS function of $(\mathcal{P}_1 \times \mathcal{P}_2, \kappa)$.
        \item Similarly, let $g_1$ and $g_2$ be left KLS functions of $(\mathcal{P}_1, \kappa_1)$ and $(\mathcal{P}_2, \kappa_2)$ respectively. Then $g_1 \otimes g_2$ is a left KLS function of $(\mathcal{P}_1 \times \mathcal{P}_2, \kappa)$.
    \end{enumerate}
\end{lemma}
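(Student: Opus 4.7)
The plan is to verify that $f_1 \otimes f_2$ (respectively $g_1 \otimes g_2$) satisfies the three defining properties of the right (respectively left) KLS function of $(\mathcal{P}_1 \times \mathcal{P}_2, \kappa)$, and then conclude by the uniqueness clause in the definition. I will carry out the right KLS case in detail; the left case is strictly analogous since the only change is that multiplication by $\kappa$ moves from the left to the right.

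The normalization $(f_1 \otimes f_2)_{(s_1,s_2),(s_1,s_2)} = (f_1)_{s_1,s_1} \cdot (f_2)_{s_2,s_2} = 1$ is immediate from the tensor product formula
\[ (a_1 \otimes a_2)_{(s_1,s_2),(t_1,t_2)} = (a_1)_{s_1,t_1} \cdot (a_2)_{s_2,t_2}. \]
For the degree bound, I would split into cases depending on whether $s_1 < t_1$ or $s_1 = t_1$ (and symmetrically for the second coordinate). When $s_1 = t_1$, the factor $(f_1)_{s_1,t_1} = 1$ has degree zero, and the required bound reduces to $\mathrm{deg}((f_2)_{s_2,t_2}) \leq \mathrm{rk}_2(s_2,t_2)/2$, which holds by assumption; analogously when $s_2=t_2$. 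When both inequalities are strict, the degree of the product is bounded by the sum of the individual degrees, so
\[ \mathrm{deg}\bigl((f_1 \otimes f_2)_{(s_1,s_2),(t_1,t_2)}\bigr) \leq \tfrac{\mathrm{rk}_1(s_1,t_1) + \mathrm{rk}_2(s_2,t_2)}{2} = \tfrac{\mathrm{rk}((s_1,s_2),(t_1,t_2))}{2}, \]
as needed.

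The crucial step is the functional equation $f^{\mathrm{rev}} = \kappa \cdot f$. For this I rely on two formal compatibilities of the Kronecker product on $\mathrm{Mat}_{\mathcal{P} \times \mathcal{P}}(\mathbb{Z}[x])$ with the structure of the incidence algebra: the product of two tensors factors as $(a_1 \otimes a_2)\cdot(b_1 \otimes b_2) = (a_1 b_1) \otimes (a_2 b_2)$, which is a direct calculation using the convolution formula and the additivity of $\mathrm{rk}$, and $(-)^{\mathrm{rev}}$ commutes with $\otimes$, which is exactly the identity already used in the previous lemma to prove that $\kappa_1 \otimes \kappa_2$ is a kernel. Combining them gives
\[ (f_1 \otimes f_2)^{\mathrm{rev}} = f_1^{\mathrm{rev}} \otimes f_2^{\mathrm{rev}} = (\kappa_1 f_1) \otimes (\kappa_2 f_2) = (\kappa_1 \otimes \kappa_2)\cdot(f_1 \otimes f_2) = \kappa \cdot (f_1 \otimes f_2). \]

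Having verified the three defining properties, the uniqueness statement in the definition of the right KLS function forces $f_1 \otimes f_2$ to be the right KLS function of $(\mathcal{P}_1 \times \mathcal{P}_2, \kappa)$. For the left case I simply repeat the argument, replacing $\kappa \cdot f$ by $g \cdot \kappa$; the same compatibility of Kronecker products with matrix multiplication yields $(g_1 \otimes g_2)^{\mathrm{rev}} = (g_1 \otimes g_2) \cdot (\kappa_1 \otimes \kappa_2)$. I do not expect any serious obstacle: the only delicate point is ensuring the degree bound handles the cases where one coordinate is fixed, and this is handled by noting that the diagonal entries of $f_1$ and $f_2$ are the constant polynomial $1$.
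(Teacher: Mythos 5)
Your proof is correct and follows essentially the same route as the paper: verify that $f_1 \otimes f_2$ satisfies the three defining conditions of a right KLS function (via the compatibility of $\otimes$ with the incidence-algebra product and with $(-)^{\mathrm{rev}}$, plus additivity of degrees) and appeal to uniqueness. The only difference is cosmetic — you carry out the normalization check and the case analysis for the degree bound explicitly, which the paper leaves implicit, and your attribution of where the additivity of $\mathrm{rk}$ is used is slightly off (it is needed for $(-)^{\mathrm{rev}}$ commuting with $\otimes$, not for the Kronecker factorization of the convolution product), but this does not affect correctness.
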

\begin{proof}
    Since the tensor product commutes with $(-)^{\mathrm{rev}}$ we have:
    \[ (f_1 \otimes f_2)^{\mathrm{rev}} = f_1^{\mathrm{rev}} \otimes f_2^{\mathrm{rev}} =  \kappa_1 f_1 \otimes \kappa_2f_2 = \kappa \cdot (f_1 \otimes f_2).\]
    Furthermore the degree of a polynomial is additive under multiplication, so
    \begin{align*} \mathrm{deg}((f_1\otimes f_2)_{(s_1,s_2),(t_1,t_2)}) = \mathrm{deg}((f_1)_{s_1,t_1} \cdot (f_2)_{s_2,t_2}) = \\ \mathrm{deg}((f_1)_{s_1,t_1}) + \mathrm{deg}( (f_2)_{s_2,t_2})
    < \frac{\mathrm{rk}_1(s_1,t_1)}{2} + \frac{\mathrm{rk}_2(s_2,t_2)}{2} = \mathrm{rk}((s_1,s_2),(t_1,t_2)) \end{align*}
    The second part follows analogously.
\end{proof}
Let us now prove the following.
\chowwfun*
\begin{proof}[Proof of Theorem \ref{thmchowfun}]
    Comparing entries $(s_1,t_1),(s_2,t_2)$ for $s_1,t_1\in \mathcal{P}_1$ and $s_2,t_2\in \mathcal{P}_2$, while considering the cases $s_1=t_1$ and $s_2=t_2$ separately, we see that the following relation between the reduced kernels holds:
    \begin{align}\label{kernelrek}
        \overline{\kappa} = (x-1) \cdot (\overline{\kappa_1} \otimes \overline{\kappa_2}) + x \cdot (\overline{\kappa_1} \otimes I) + x\cdot (I \otimes \overline{\kappa_1}) + xI .
    \end{align}
    Multiplying Equation \ref{kernelrek} from the left by $\overline{\kappa}^{-1}$, and from the right by $\overline{\kappa_1}^{-1} \otimes \overline{\kappa_2}^{-1}$ we get
    \[ \overline{\kappa_1}^{-1} \otimes \overline{\kappa_2}^{-1} = (x-1) \cdot \overline{\kappa}^{-1} + x\cdot \overline{\kappa}^{-1} (I \otimes \overline{\kappa_2}^{-1}) + x \cdot \overline{\kappa}^{-1}(\overline{\kappa_1}^{-1}\otimes I) + x\cdot \overline{\kappa}^{-1}(\overline{\kappa_1}^{-1}\otimes \overline{\kappa_2}^{-1}). \]
    Substituting Definition \ref{defchow} we obtain Equation \ref{rowH1}.
    Similarly, multiplying Equation \ref{kernelrek} from the right by $\overline{\kappa}^{-1}$, and from the left by $\overline{\kappa_1}^{-1} \otimes \overline{\kappa_2}^{-1}$ we get
    \[ \overline{\kappa_1}^{-1} \otimes \overline{\kappa_2}^{-1} = (x-1) \cdot \overline{\kappa}^{-1} + x
    \cdot (I \otimes \overline{\kappa_2}^{-1})\overline{\kappa}^{-1} + x\cdot (\overline{\kappa_1}^{-1}\otimes I)\overline{\kappa}^{-1} + x \cdot (\overline{\kappa_1}^{-1}\otimes \overline{\kappa_2}^{-1})\overline{\kappa}^{-1}. \]
    Substituting Definition \ref{defchow} we obtain Equation \ref{rowH2}.
\end{proof}
\begin{theorem}
    Let $F_1,F_2$, and $F$ be the right augmented Chow functions of $\mathcal{P}_1$, $\mathcal{P}_2$, and $\mathcal{P}_1 \times \mathcal{P}_2$ with respect to $\kappa_1$, $\kappa_2$, and $\kappa$ (see Definition \ref{defchow}). Then we have
    \begin{align}\label{fkerrel}
        F = F_1 \otimes F_2 + x(H_1\otimes H_2)\cdot F - x(I \otimes H_2)F - x(H_1\otimes I)F +xF.
    \end{align}
    Similarly, let $G_1,G_2$, and $G$ be the left augmented Chow functions of $\mathcal{P}_1$, $\mathcal{P}_2$, and $\mathcal{P}_1 \times \mathcal{P}_2$ with respect to $\kappa_1$, $\kappa_2$, and $\kappa$. Then we have
    \begin{align}\label{gkerrel}
        G = G_1 \otimes G_2 + xG\cdot (H_1 \otimes H_2) - xG\cdot (I \otimes H_2) - xG \cdot (H_1 \otimes I) + xG.
    \end{align}
\end{theorem}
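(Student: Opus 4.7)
The plan is to derive the two equations by simply post- or pre-multiplying the identities (\ref{rowH1}) and (\ref{rowH2}) of Theorem \ref{thmchowfun} by the reversed KLS function of the product poset, and then rewriting everything in terms of $F$, $G$ and their factors. Since Lemma \ref{tensorkls} gives $f = f_1 \otimes f_2$ and $g = g_1 \otimes g_2$, and since $(-)^{\mathrm{rev}}$ commutes with tensor products, we also get $f^{\mathrm{rev}} = f_1^{\mathrm{rev}} \otimes f_2^{\mathrm{rev}}$ and $g^{\mathrm{rev}} = g_1^{\mathrm{rev}} \otimes g_2^{\mathrm{rev}}$.

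For Equation (\ref{fkerrel}), the idea is to start with Equation (\ref{rowH2}) and multiply both sides on the right by $f^{\mathrm{rev}}$. On the left hand side we obtain $H \cdot f^{\mathrm{rev}} = F$ directly from Definition \ref{defchow}. On the right hand side, the first summand $(H_1 \otimes H_2) \cdot f^{\mathrm{rev}}$ becomes
\begin{align*}
    (H_1 \otimes H_2) \cdot (f_1^{\mathrm{rev}} \otimes f_2^{\mathrm{rev}}) = (H_1 \cdot f_1^{\mathrm{rev}}) \otimes (H_2 \cdot f_2^{\mathrm{rev}}) = F_1 \otimes F_2,
\end{align*}
while each of the remaining summands contains a trailing factor of $H$ which, after absorbing $f^{\mathrm{rev}}$, becomes $F$. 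This yields exactly Equation (\ref{fkerrel}).

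For Equation (\ref{gkerrel}), the dual step is to take Equation (\ref{rowH1}) and multiply both sides on the left by $g^{\mathrm{rev}}$. The left hand side becomes $g^{\mathrm{rev}} \cdot H = G$. For the right hand side, the first summand transforms as
\begin{align*}
    g^{\mathrm{rev}} \cdot (H_1 \otimes H_2) = (g_1^{\mathrm{rev}} \otimes g_2^{\mathrm{rev}})(H_1 \otimes H_2) = (g_1^{\mathrm{rev}} H_1) \otimes (g_2^{\mathrm{rev}} H_2) = G_1 \otimes G_2,
\end{align*}
and in each remaining summand the leading factor of $H$ gets replaced by $G$, giving Equation (\ref{gkerrel}).

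There is no genuine obstacle here; the proof is a mechanical consequence of Theorem \ref{thmchowfun} together with the compatibility of tensor products with $(-)^{\mathrm{rev}}$ and with multiplication. The only point to keep track of is that the two identities in Theorem \ref{thmchowfun} are distinguished by whether $H$ appears on the left or the right of the other factors, and that one must use the identity in which $H$ is on the \emph{outer} side corresponding to where $f^{\mathrm{rev}}$ (respectively $g^{\mathrm{rev}}$) is multiplied. With this choice, every term reassembles cleanly into $F$ or $G$.
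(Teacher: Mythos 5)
Your proof is correct and follows exactly the route of the paper: right-multiply Equation (\ref{rowH2}) by $f^{\mathrm{rev}} = f_1^{\mathrm{rev}} \otimes f_2^{\mathrm{rev}}$ to get (\ref{fkerrel}), and left-multiply Equation (\ref{rowH1}) by $g^{\mathrm{rev}} = g_1^{\mathrm{rev}} \otimes g_2^{\mathrm{rev}}$ to get (\ref{gkerrel}), using Lemma \ref{tensorkls} and compatibility of $(-)^{\mathrm{rev}}$ with tensor products. Your remark about choosing the identity in which $H$ sits on the outer side where the KLS factor is multiplied is precisely the key observation.
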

\begin{proof}
    By Lemma \ref{tensorkls} the right (respectively left) KLS function of $\mathcal{P}_1\times \mathcal{P}_2$ associated to $\kappa$ is the tensor product of right (respectively left) KLS functions of $\mathcal{P}_1$ associated to $\kappa_1$ and $\mathcal{P}_2$ associated to $\kappa_2$. Thus the relation \ref{fkerrel} is obtained from Equation \ref{rowH2} by multiplying from the right by $f_1^{\mathrm{rev}} \otimes f_2^{\mathrm{rev}}$. Similarly, the relation \ref{gkerrel} is obtained from Equation \ref{rowH1} by multiplying from the left by $g_1^{\mathrm{rev}} \otimes g_2^{\mathrm{rev}}$.
\end{proof}
Using Proposition \ref{HGtochow} the relations above translate into similar equalities for Hilbert functions of Chow rings (respectively augmented Chow rings) of matroids. We use this to give proofs of Theorem \ref{thm1}, Theorem \ref{thm2}, and Theorem \ref{thmaug}.

\maithm*

\begin{proof}[Proof of Theorem \ref{thm1}]
    Denote by $H(M)$ the Hilbert function of the Chow ring of a matroid $M$. Since the characteristic function of the lattice of flats of $M\oplus N$ is the tensor product of the characteristic functions of the lattices of flats of $M$ and $N$ we can apply Equation \ref{rowH1} to the respective $\chi$-Chow functions. Comparing the entries in the row $(\emptyset,\emptyset)$ and column $(M,N)$ of Equation \ref{rowH1} and applying part $(1)$ of Proposition \ref{HGtochow} we get the equality:
    \begin{align*}
        H(M \oplus N)
        = H(M) \cdot H(N) + x \cdot \underset{F\subseteq M, G\subseteq N}{\sum} H(M^F \oplus N^G) \cdot H(M_F) \cdot H(N_G) \\
        - x\cdot \underset{G\subseteq N}{\sum} H(M \oplus N^G)\cdot H(N_G) - x \cdot \underset{F \subseteq M}{\sum} H(M^F \oplus N) + x\cdot H(M \oplus N) = &\\
        = H(M) \cdot H(N) + x \cdot \underset{F\subsetneq M, G\subsetneq N}{\sum} H(M^F \oplus N^G) \cdot H(M_F) \cdot H(N_G)
    \end{align*}
    Theorem \ref{phiinjective} asserts that the map $\phi$ is injective, while the above equation shows that its domain and codomain have equal Hilbert functions. Thus $\phi$ must be an isomorphism.
\end{proof}

We give a brief sketch of a geometric interpretation of the decomposition in Theorem \ref{thm1} in the case when $M$ and $N$ are realizable over a field of characteristic zero. Say that $M$, $N$ correspond to arrangements $\mathcal{M}$, $\mathcal{N}$ in $\mathbb{P}(V^{\vee})$, $\mathbb{P}(W^{\vee})$ respectively. Recall the definition of the wonderful model given in Section \ref{subsectionmatroids}. It can be shown that the wonderful model $\underline{X}_{M \oplus N}$ is isomorphic to a variety obtained from a different order of blowups of $\mathbb{P}(V^{\vee}\oplus W^{\vee})$ where we:
\begin{itemize}
    \item First blow up $H_M$.
    \item Then blow up (the strict transforms of) $H_F$ for flats $F\subsetneq M$ and $H_G$ for flats $G \subsetneq N$ in order determined by increasing corank, obtaining a variety which we denote here by $\underline{\tilde{X}}_{M \oplus N}$.
    \item Then blow up (the strict transforms of) $H_{F\cup G}$ for all other flats $F \cup G \subsetneq M\oplus N$ in the same order as in the constriction in Section \ref{subsectionmatroids}.
\end{itemize}
We have a morphism $\pi':\mathrm{Bl}_{H_M}(\mathbb{P}(V^{\vee} \oplus W^{\vee})) \rightarrow \mathbb{P}(V) \times \mathbb{P}(W)$, which forms a $\mathbb{P}^1$-bundle. We can then blowup the (strict transforms of) $H_F \times \mathbb{P}(W^{\vee})$ and $\mathbb{P}(V^{\vee}) \times H_G$ in $\mathbb{P}(V) \times \mathbb{P}(W)$ and their inverse images $H_F$, $H_G$ in $\mathbb{P}(V^{\vee} \oplus W^{\vee})$. Since $\pi'$ is flat we get a $\mathbb{P}^1$-bundle
\[ \underline{\tilde{X}}_{M\oplus N} \rightarrow \underline{X}_{M} \times \underline{X}_{N}. \]
By the projective bundle formula this corresponds to factors
\[ \ch(M) \otimes \ch(N) \text{ and } \ch(M) \otimes \ch(N) [-1] \]
in Theorem \ref{thm1}. We then consider a sequence of blowups $\pi_{F \cup G}$ of the strict transforms of $H_{F \cup G}$ for pairs of nonempty flats $F, G$ satisfying $F \cup G \neq M \cup N$ in the order as in Section \ref{subsectionmatroids}. At each point the subvariety we blow up has a structure of a $\underline{X}_{M^F} \times \underline{X}_{N^F}$-bundle over $\underline{X}_{M_F \oplus N_G}$. Its cohomology can be calculated by checking the assumptions of the Leray-Hirsch theorem (or, in the case we work over $\mathbb{C}$, considering the affine cell decomposition induced by the Białynicki-Birula decomposition). Applying the formula for the cohomology of the blowup \cite[Theorem 9.27]{voisin} shows, that the blowup $\pi_{F\cup G}$ contributes the factor
\[ \ch(M_F \oplus N_G) \otimes \ch(M^F) \otimes \ch(N^G) [-1] \]
to the decomposition. The composition of all the blowups $\pi_{F \cup G}$ yields the desired morphism
\[ \pi: \underline{X}_{M\oplus N} \rightarrow \underline{X}_{M} \times \underline{X}_{N} . \]
\\
For the sake of completeness we show how to use Theorem \ref{thm1} to obtain a decomposition of $\ch(M \oplus N)$ into irreducible $\ch(M)\otimes \ch(N)$-modules.
\corrr*
\begin{proof}[Proof of Corollary \ref{corirred}]
    Apply recursively the decomposition in Theorem \ref{thm1} to the rings $\ch(M_{F_i} \oplus N_{G_i})$, were $F_i$, $G_i$ is the pair of flats indexing the summand in the direct sum
    \[ \underset{F,G}{\bigoplus} \ch(M_F \oplus N_G) \ot \ch(M^F) \ot \ch(N^G) [-1] \]
    we are considering at the $i$-th step. Fixing one indecomposable summand we construct two flags:
    \[ \mathcal{F} = \{\emptyset \subsetneq F_1 \subsetneq \ldots \subsetneq F_k \subseteq M \}, \  \mathcal{G} = \{\emptyset \subsetneq G_1 \subsetneq \ldots \subsetneq G_k \subseteq N \}.\]
    The recursive process terminates if either at least one of the $F_i, G_i$ is equal to $M, N$ respectively, or if we instead chose the first factor in the recurrence step applying Theorem \ref{thm1}, that is $\ch(M_{F_i}) \otimes \ch(N_{G_i})$. In the first case the considered summand is isomorphic to an element of the direct sum in Corollary \ref{corirred} indexed by a pair of flags as above with $F_k=M$ or $G_k=N$, while in the second case to the summand with the pair of flags satisfying $F_k \subsetneq M$ and $G_k \subsetneq N$. Note that the shifts in degree agree with the formula.
\end{proof}
\begin{remark}
    By examining the map $\phi$ described in Section \ref{sectioninj} that induces the decomposition in Theorem \ref{thm1} we get that the irreducible factors in Corollary \ref{corirred} corresponding to
    \[ \mathcal{F} = \{\emptyset \subsetneq F_1 \subsetneq \ldots \subsetneq F_k \subseteq M \}, \  \mathcal{G} = \{\emptyset \subsetneq G_1 \subsetneq \ldots \subsetneq G_k \subseteq N \} \]
    are spanned by the following monomials:
    \begin{itemize}
        \item $x_{F_1\cup G_1} \cdot \ldots \cdot x_{F_k \cup G_k}$ if $F_k \cup G_k \neq M \cup N$,
        \item $x_{F_1 \cup G_1} \cdot \ldots \cdot x_{F_{k-1}\cup G_{k-1}} \cdot x_{M \cup G_{k-1}}$ if $F_k \cup G_k = M \cup N$.
    \end{itemize}
\end{remark}

\maithmdwa*

\begin{proof}[Proof of Theorem \ref{thm2}]
    The proof is exactly analogous to that of Theorem \ref{thm1}, considering Equation \ref{rowH2} instead of Equation \ref{rowH1}. Here we sketch an alternative method of proof, which deduces the result from Theorem \ref{thm1}. Note that if at least one of the matroids $M$, $N$ has rank one, then the two decompositions are exactly the same, as is the case in \cite[Theorem 1.2]{ssd}. Assume that Theorem \ref{thm2} holds for all pairs of matroids $M',N'$ with $\mathrm{rk}(M') + \mathrm{rk}(N') <n$ and consider $M,N$ with $\mathrm{rk}(M') + \mathrm{rk}(N') = n$. It is enough to show that both sides of the equation
    \[ \ch(M\oplus N) \simeq \ch(M)\ot \ch(N) \ \oplus
         \underset{F,G}{\bigoplus} \ch(M^F \oplus N^G) \ot \ch(M_F) \ot \ch(N_G) [-1] \]
    in Theorem \ref{thm2} have the same decomposition into irreducible $\ch(M) \otimes \ch(N)$-modules. We know the decomposition of the left hand side from Corollary \ref{corirred}. Using our inductive assumption we recursively apply the decomposition from $Theorem \ref{thm2}$ to pairs of matroids $M^F, N^G$ appearing on the right hand side of the equation, getting an analogous formula
    \begin{align*} \underset{\substack{ \emptyset \subseteq F_1 \subsetneq \ldots \subsetneq F_k \subsetneq M \\ \emptyset \subseteq G_1 \subsetneq \ldots \subsetneq G_k \subsetneq N}}{\bigoplus}  \ch(M^{F_1}) \otimes \ch(M_{F_1}^{F_2}) \otimes \ldots \otimes \ch(M_{F_k})
    \otimes \ch(N^{G_1}) \otimes\ch(N_{G_1}^{G_2}) \otimes \ldots \otimes \ch(N_{G_k})[-k]. \end{align*}
    The correspondence between isomorphic summands in the direct sum above and in Corollary \ref{corirred} is given by assigning to a pair of flags
    \[ \mathcal{F} = \{\emptyset \subseteq F_1 \subsetneq \ldots \subsetneq F_k \subsetneq M \}, \  \mathcal{G} = \{\emptyset \subseteq G_1 \subsetneq \ldots \subsetneq G_k \subsetneq N \}\]
    a pair of flags $\mathcal{F}', \mathcal{G}'$, were $\mathcal{F}'$ (respectively $\mathcal{G}'$) is obtained from the original flag by discarding $F_1$ and adding $M$ as the last element if and only if $F_1 = \emptyset$ (respectively discarding $G_1$ and adding $N$ if and only if $G_1 = \emptyset$).
\end{proof}
\begin{remark}
    Note that the isomorphism above differs from the one induced by $\phi'$. For example, the summand spanned by the indecomposable factor corresponding to a pair of flags
    \[ \emptyset = F_1 \subsetneq \ldots \subsetneq F_k \subsetneq M,\ \emptyset \subsetneq G_1 \subsetneq \ldots \subsetneq G_k \subsetneq N\]
    is spanned by the monomial
    \[ x_{\emptyset \cup G_1}\cdot x_{F_2\cup G_2} \cdot \ldots \cdot x_{F_k \cup G_k} \]
    in the decomposition induced by recursively applying $\phi'$. However, the factor corresponding to it under the isomorphism in the proof above is spanned by the monomial
    \[ x_{F_2 \cup G_1} \cdot x_{F_3 \cup G_2} \cdot \ldots \cdot x_{F_{k} \cup G_{k-1}} \cdot x_{M \cup G_k}.\]
\end{remark}

\thmaugg*
\begin{proof}[Proof of Theorem \ref{thmaug}]
    Denote by $G(M)$ the Hilbert function of the augmented Chow ring of a matroid $M$. As before, since the characteristic function of the lattice of flats of $M\oplus N$ is the tensor product of the characteristic functions of the lattices of flats of $M$ and $N$, we can apply Equation \ref{rowH1} to the respective $\chi$-Chow functions. Comparing the entries in the row $(\emptyset,\emptyset)$ and column $(M,N)$ of Equation \ref{fkerrel} and applying Proposition \ref{HGtochow} we get the relation:
    \begin{align*}
        G(M \oplus N)
        =G(M) \cdot G(N) + x\cdot \underset{F\subseteq M,G \subseteq N}{\sum} G(M^F \oplus N^G) \cdot G(M_F) \cdot G(N_G) \\
        - x\cdot \underset{G \subseteq N}{\sum} G(M \oplus N^G) \cdot G(N_G) - x\cdot \underset{F \subseteq M}{\sum} G(M^F \oplus N) \cdot G(M_F) + x\cdot G(M \oplus N) = \\
        = G(M) \cdot G(N) + x\cdot \underset{F\subsetneq M, G\subsetneq N}{\sum} G(M^F \oplus N^G) \cdot G(M_F) \cdot G(N_G).
    \end{align*}
    Since by Theorem \ref{augphiinjective} the map $\tilde{\phi}$ is injective, and the above shows that its domain and codomain have the same Hilbert functions, it must be an isomorphism.
\end{proof}
\begin{remark}
    The surjectivity of $\phi$, $\phi'$, and $\tilde{\phi}$ in the proofs of Theorem \ref{thm1}, Theorem \ref{thm2}, and Theorem \ref{thmaug} can be also showed via explicit combinatorial considerations of the relations given by the ideals $I,J$ in the definitions of $\ch(M)$ and $\ach(M)$. However, this is considerably less enlightening than the use of the Chow polynomials of posets.
\end{remark}

\bibliographystyle{alpha}
\bibliography{bibsum}

@article{ssd,
title = {A semi-small decomposition of the {C}how ring of a matroid},
journal = {Advances in Mathematics},
volume = {409},
pages = {108646},
year = {2022},
issn = {0001-8708},
doi = {https://doi.org/10.1016/j.aim.2022.108646},
url = {https://www.sciencedirect.com/science/article/pii/S0001870822004637},
author = {Tom Braden and June Huh and Jacob P. Matherne and Nicholas Proudfoot and Botong Wang}
}

@book{Eulerianbook,
author = {Petersen, Kyle},
year = {2015},
month = {10},
series = {Birkhäuser Advanced Texts: Basler Lehrbücher},
publisher = {Birkhäuser/Springer},
mrnumber = {MR3408615},
title = {Eulerian Numbers},
isbn = {978-1-4939-3090-6},
doi = {10.1007/978-1-4939-3091-3}
}

@article{Huhloggraph,
author = {Huh, June},
year = {2012},
pages = {907-927},
title = {Milnor numbers of projective hypersurfaces and the chromatic polynomial of graphs},
volume = {25},
journal = {Journal of the American Mathematical Society},
doi = {10.2307/23265095}
}

@article{AHKcomb,
    title={Hodge theory for combinatorial geometries},
    author={Karim A. Adiprasito and June Huh and Eric Katz},
    journal={Annals of Mathematics},
    year={2018},
    pages = {381-452},
    volume = {188}
}

@article{originalmatroid,
 author = {Hassler Whitney},
 journal = {American Journal of Mathematics},
 number = {3},
 pages = {509--533},
 publisher = {The Johns Hopkins University Press},
 title = {On the Abstract Properties of Linear Dependence},
 volume = {57},
 year = {1935}
}

@article{KLpolyofmatroid,
title = {The {K}azhdan–{L}usztig polynomial of a matroid},
journal = {Advances in Mathematics},
volume = {299},
pages = {36-70},
year = {2016},
issn = {0001-8708},
author = {Ben Elias and Nicholas Proudfoot and Max Wakefield},
}

@article{Chowringintroduced,
  title={Chow rings of toric varieties defined by atomic lattices},
  author={Eva Maria Feichtner and Sergey Yuzvinsky},
  journal={Inventiones mathematicae},
  year={2003},
  volume={155},
  pages={515-536}
}

@article{KLbruhat,
    author = {David Kazhdan and George Lusztig},
    title = {Representations of Coxeter groups and Hecke algebras},
    journal = {Invent. Math.},
    year = {1979},
    volume = {53},
    number = {2},
    pages = {165-184}
}

@book{voisin,
    series={Cambridge Studies in Advanced Mathematics},
    title={Hodge Theory and Complex Algebraic Geometry II},
    publisher={Cambridge University Press},
    author={Voisin, Claire},
    year={2003},
    collection={Cambridge Studies in Advanced Mathematics}
}

@book{Oxley,
    author = {Oxley, James},
    title = {Matroid Theory},
    publisher = {Oxford University Press},
    year = {2011},
    month = {02},
    isbn = {9780198566946},
    doi = {10.1093/acprof:oso/9780198566946.001.0001}
}

@misc{singular,
      title={Singular {H}odge theory for combinatorial geometries}, 
      author={Tom Braden and June Huh and Jacob P. Matherne and Nicholas Proudfoot and Botong Wang},
      year={2023},
      eprint={2010.06088},
      archivePrefix={arXiv},
      primaryClass={math.CO},
      url={https://arxiv.org/abs/2010.06088}, 
}

@article{Stanleykernel,
    author = {Richard P. Stanley},
    title = {Subdivisions and local $h$-vectors},
    journal = {J. Amer. Math. Soc.},
    volume = {5},
    pages = {805-851},
    year = {1992}
}

@article{KLSProudfoot,
author = {Proudfoot, Nicholas},
year = {2017},
month = {12},
title = {The algebraic geometry of {K}azhdan-{L}usztig-{S}tanley polynomials},
volume = {5},
journal = {EMS Surveys in Mathematical Sciences},
doi = {10.4171/EMSS/28}
}

@article{defweakrank,
title = {Twisted Incidence Algebras and {K}azhdan–{L}usztig–{S}tanley Functions},
journal = {Advances in Mathematics},
volume = {148},
number = {1},
pages = {44-74},
year = {1999},
issn = {0001-8708},
doi = {https://doi.org/10.1006/aima.1999.1839},
author = {Francesco Brenti}
}

@article{incidenceoriginal,
    author = {Rota, Gian-Carlo},
    title = {On the foundations of combinatorial theory {I}. {T}heory of {M}öbius Functions},
    journal = {Zeitschrift für Wahrscheinlichkeitstheorie und Verwandte Gebiete},
    year = {1964},
    month = {01},
    volume = {2},
    pages = {1432-2064},
    url = {https://doi.org/10.1007/BF00531932}
}

@misc{CF25,
      title={Chow functions for partially ordered sets}, 
      author={Luis Ferroni and Jacob P. Matherne and Lorenzo Vecchi},
      year={2024},
      eprint={2411.04070},
      archivePrefix={arXiv},
      primaryClass={math.CO},
      url={https://arxiv.org/abs/2411.04070}, 
}
\end{document}